\title[K-stability and the volume functions]{On K-stability and the volume functions 
of $\mathbb{Q}$-Fano varieties}
\author{Kento Fujita} 
\date{\today}
\subjclass[2010]{Primary 14J45; Secondary 14L24}
\keywords{Fano varieties, K-stability, K\"ahler-Einstein metrics, minimal model program}
\address{Department of Mathematics, Faculty of Science, 
Kyoto University, Kyoto 606-8502, Japan}
\email{fujita@math.kyoto-u.ac.jp}
\newcommand{\pr}{\mathbb{P}}
\newcommand{\Z}{\mathbb{Z}}
\newcommand{\Q}{\mathbb{Q}}
\newcommand{\R}{\mathbb{R}}
\newcommand{\C}{\mathbb{C}}
\newcommand{\F}{\mathbb{F}}
\newcommand{\B}{\mathbb{B}}
\newcommand{\A}{\mathbb{A}}
\newcommand{\G}{\mathbb{G}}
\newcommand{\ND}{\operatorname{N}^1}
\newcommand{\NE}{\operatorname{NE}}
\newcommand{\Nef}{\operatorname{Nef}}
\newcommand{\Eff}{\operatorname{Eff}}
\newcommand{\BIG}{\operatorname{Big}}
\newcommand{\Exc}{\operatorname{Exc}}
\newcommand{\Spec}{\operatorname{Spec}}
\newcommand{\Pic}{\operatorname{Pic}}
\newcommand{\DIV}{\operatorname{div}}
\newcommand{\Hom}{\operatorname{Hom}}
\newcommand{\Aut}{\operatorname{Aut}}
\newcommand{\Proj}{\operatorname{Proj}}
\newcommand{\cont}{\operatorname{cont}}
\newcommand{\DF}{\operatorname{DF}}
\newcommand{\vol}{\operatorname{vol}}
\newcommand{\sI}{\mathcal{I}}
\newcommand{\sC}{\mathcal{C}}
\newcommand{\sO}{\mathcal{O}}
\newcommand{\sN}{\mathcal{N}}
\newcommand{\sE}{\mathcal{E}}
\newcommand{\sL}{\mathcal{L}}
\newcommand{\sB}{\mathcal{B}}
\newcommand{\sA}{\mathcal{A}}
\newtheorem{thm}{Theorem}[section]
\newtheorem{lemma}[thm]{Lemma}
\newtheorem{proposition}[thm]{Proposition}
\newtheorem{corollary}[thm]{Corollary}
\newtheorem{claim}[thm]{Claim}
\theoremstyle{definition}
\newtheorem{definition}[thm]{Definition}
\newtheorem{remark}[thm]{Remark}
\newtheorem{example}[thm]{Example}
\newtheorem*{ack}{Acknowledgments}
\begin{document}

\maketitle 

\begin{abstract}
We introduce a new effective stability named ``divisorial stability" for Fano 
manifolds which is weaker than K-stability and is stronger than slope stability 
along divisors. We show that we can test divisorial stability 
via the volume function. As a corollary, we prove that 
the first coordinate of the barycenter of the 
Okounkov body of the anticanonical divisor is not bigger than one 
for any K\"ahler-Einstein Fano manifold. 
In particular, for toric Fano manifolds, the existence of K\"ahler-Einstein metrics 
is equivalent to divisorial semistability. 
Moreover, we find many non-K\"ahler-Einstein 
Fano manifolds of dimension three. 
\end{abstract}

\setcounter{tocdepth}{1}
\tableofcontents

\section{Introduction}\label{intro_section}

Let $X$ be a \emph{$\Q$-Fano variety}, that is, a projective variety which has at most 
log-terminal singularities such that the anticanonical divisor $-K_X$ of $X$ is 
ample ($\Q$-Cartier). If $X$ is a \emph{Fano manifold} (i.e., $X$ is smooth), 
then it is known that the existence of 
K\"ahler-Einstein metrics is equivalent to K-polystability of the pair $(X, -K_X)$ 
(see \cite{tian1, don05, CT, stoppa, mab1, mab2, Berman, CDS1, CDS2, CDS3, tian2}). 
The notion of K-polystability is weaker then the notion of K-stability and is 
stronger than the notion of K-semistability. 
Our main interest is to test K-(semi)stability 
of the pair $(X, -K_X)$. (In this paper, we do not treat K-polystability.) 
However, in general, it is hard to test K-(semi)stability of the pair $(X, -K_X)$. 
To overcome the difficulties, Ross and Thomas introduced the notion of 
\emph{slope stability} in \cite{RT}. 
This is an epoch-making notion since we can easily calculate. 
In particular, slope stability of Fano manifolds along divisors is interpreted by 
the volume function (see \cite{fjt}). 
However, unfortunately, slope stability is strictly weaker than K-stability. 
In fact, as in \cite[Example 7.6]{PR}, if $X$ is the blowup of $\pr^2$ along 
distinct two points, then $X$ is a toric Fano manifold and 
$(X, -K_X)$ is not K-semistable, but $(X, -K_X)$ is slope stable. 

The purpose of this paper is to get a new necessary condition, called 
\emph{divisorial stability}, of K-(semi)stability 
of the pair $(X, -K_X)$ which is strictly sharper than slope (semi)stability along divisors
(if $X$ is smooth) and is easy to test. 
We quickly describe the key idea. 
We consider the case $X$ is smooth for simplicity. 
For slope stability along a nonzero effective divisor $D$, 
we consider the following flag ideal 
\[
\sO_X(-MD)+\sO_X(-(M-1)D)t^1+\cdots+\sO_X(-D)t^{M-1}+(t^M)\subset
\sO_{X\times\A^1_t}
\]
for some $M\in \Z_{>0}$. For divisorial stability along $D$, 
we consider the flag ideal
\[
I_M+I_{M-1}t^1+\cdots+I_1t^{M-1}+(t^M)\subset\sO_{X\times\A^1_t}, 
\]
where each $I_j\subset \sO_X$ is the \emph{base ideal} of the sub linear system 
of the complete linear system $|-rK_X|$ (for some $r$) associated to the embedding 
\[
H^0(X, \sO_X(-rK_X-jD))\subset H^0(X, \sO_X(-rK_X))
\]
(see Section \ref{div_section} in detail). The point is, despite each $I_j$ is smaller than 
$\sO_X(-jD)$, the associated global sections are same. More precisely, 
as subspaces of $H^0(X, \sO_X(-rK_X))$, the following equality holds: 
\[
H^0(X, \sO_X(-rK_X-jD))=H^0(X, \sO_X(-rK_X)\cdot I_j). 
\]
Thanks to this property, together with the finite generation of certain section rings 
(due to \cite{BCHM}), we can easily calculate the Donaldson-Futaki invariant of 
the semi test configuration 
(called the \emph{basic semi test configuration}) obtained by the above flag ideal. 
The following definition is not the original definition 
but a consequence of a certain transformation.

\begin{definition}[{see Definition \ref{divst_dfn}, Proposition \ref{divst_prop} and 
Theorem \ref{volint_thm}}]\label{intro_dfn}
Let $X$ be a $\Q$-Fano variety. 
\begin{enumerate}
\renewcommand{\theenumi}{\arabic{enumi}}
\renewcommand{\labelenumi}{(\theenumi)}
\item\label{intro_dfn1}
Let $D$ be a nonzero effective Weil divisor on $X$. 
The pair $(X, -K_X)$ is said to be \emph{divisorially stable} (resp.\ 
\emph{divisorially semistable}) \emph{along $D$} if the value 
\[
\eta(D):=\vol_X(-K_X)-\int_0^{\infty}\vol_X(-K_X-xD)dx
\]
satisfies that $\eta(D)>0$ (resp.\ $\eta(D)\geq 0$), 
where $\vol_X$ is the volume function (see Definition \ref{vol_dfn}). 
\item\label{intro_dfn2}
The pair $(X, -K_X)$ is said to be \emph{divisorially stable} (resp.\ 
\emph{divisorially semistable}) if $(X, -K_X)$ is divisorially stable (resp.\ divisorially 
semistable) along any nonzero effective Weil divisor. 
\end{enumerate}
\end{definition}

Divisorial (semi)stability is weaker than K-(semi)stability 
(see Remarks \ref{divst_rmk} and \ref{divvsK_rmk}). Moreover, if $X$ is smooth, 
then it is stronger than slope (semi)stability along divisors (see Corollary \ref{slope_cor} 
\eqref{slope_cor22}). 
Moreover, we show that divisorial semistability is equivalent to K-semistability 
if $X$ is toric. 

\begin{thm}[{see Corollary \ref{toric_cor}}]\label{introtoric_thm}
Let $X$ be a toric $\Q$-Fano variety and let $P\subset M_\R$ be the associated 
polytope $($see Section \ref{toric_section}$)$. 
Then the pair $(X, -K_X)$ is divisorially semistable if and only if the barycenter of $P$ 
is equal to the origin. 
\end{thm}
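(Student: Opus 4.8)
The plan is to reduce the statement to a computation of the integral $\eta(D)$ using the toric description of volumes of torus-invariant divisors. First I would recall the toric setup: for a toric $\Q$-Fano variety $X$ with polytope $P\subset M_\R$, the anticanonical volume is $\vol_X(-K_X)=n!\cdot\operatorname{vol}(P)$ (Euclidean volume, up to normalization), where $n=\dim X$. The key observation is that divisorial semistability requires $\eta(D)\geq 0$ for \emph{all} nonzero effective Weil divisors $D$, but by a standard reduction I expect that it suffices to test torus-invariant prime divisors $D_\rho$ corresponding to rays $\rho$ of the fan, since these generate the relevant cone and the function $\eta$ is linear in the components of $D$ in a suitable sense. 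So the first real step is this reduction to invariant prime divisors.

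Next I would compute $\eta(D_\rho)$ explicitly. For a torus-invariant prime divisor $D_\rho$ with primitive ray generator $u_\rho\in N$, the polytope of $-K_X-xD_\rho$ is obtained from $P$ by moving the facet corresponding to $\rho$ inward: it is $P_x=\{m\in M_\R : \langle m,u_\sigma\rangle\geq -1 \text{ for all rays }\sigma,\ \langle m,u_\rho\rangle\geq -1+x\}$. Then $\vol_X(-K_X-xD_\rho)=n!\operatorname{vol}(P_x)$, and one integrates over $x$ from $0$ up to the pseudoeffective threshold. The crucial computation is to evaluate
\[
\int_0^\infty \operatorname{vol}(P_x)\,dx,
\]
and the cleanest way is to slice $P$ by the affine hyperplanes $\{\langle m,u_\rho\rangle = c\}$ and use a Fubini-type argument. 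After the change of variables, I expect the integral $\int_0^\infty \operatorname{vol}(P_x)\,dx$ to equal $\operatorname{vol}(P)$ times something involving the $u_\rho$-coordinate of the barycenter of $P$; carrying out the slicing identity carefully should yield that $\eta(D_\rho)$ is proportional to $\langle b_P, u_\rho\rangle$, where $b_P$ is the barycenter of $P$.

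The heart of the matter, and the step I expect to be the main obstacle, is making the above integral identity precise and tracking the constant. Concretely, I would establish an identity of the form
\[
\eta(D_\rho)=(\text{positive constant})\cdot\langle b_P,\,u_\rho\rangle,
\]
so that $\eta(D_\rho)\geq 0$ exactly when $\langle b_P,u_\rho\rangle\geq 0$. The derivation uses the elementary fact that for a convex body one can reconstruct first moments (hence barycenter coordinates) from the one-parameter family of truncated volumes $\operatorname{vol}(P_x)$; integrating $\operatorname{vol}(P_x)$ against $dx$ recovers precisely the moment of $P$ in the $u_\rho$-direction. The delicate point is handling the boundary term coming from $\vol_X(-K_X)$ versus the integral, and confirming the signs and normalizations so that the two contributions combine into the barycenter coordinate rather than some other linear functional.

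Finally I would conclude the equivalence. If the barycenter $b_P$ is the origin, then $\langle b_P,u_\rho\rangle=0$ for every ray, so $\eta(D_\rho)=0\geq 0$ for all invariant prime divisors, and by the reduction step $\eta(D)\geq 0$ for all effective $D$, giving divisorial semistability. Conversely, if $b_P\neq 0$, then since the $u_\rho$ span $N_\R$ there must exist a ray $\rho$ with $\langle b_P,u_\rho\rangle<0$ (the primitive ray generators of a complete fan positively span $N_\R$, so they cannot all lie in a closed half-space unless the corresponding functional vanishes on all of them, which forces $b_P=0$); for that $\rho$ we get $\eta(D_\rho)<0$, violating divisorial semistability. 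This positive-spanning property of the rays of a complete fan is what makes the equivalence tight, and it is the one genuinely geometric input beyond the volume computation.
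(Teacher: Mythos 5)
Your computation of $\eta(D_\rho)$ for a torus-invariant prime divisor is essentially the paper's own argument (Theorem \ref{etatoric_thm}): slicing $P$ along the functional $\langle \cdot, u_\rho\rangle$ and applying Fubini gives $\eta(D_\rho) = -\,n!\,\vol_{M_\R}(P)\,\langle b_P, u_\rho\rangle$. Note the sign: $\eta(D_\rho)$ is proportional to $-\langle b_P,u_\rho\rangle$, not $+\langle b_P,u_\rho\rangle$ (cutting inward from the facet $\{\langle\cdot,u_\rho\rangle=-1\}$ destroys volume fastest precisely when the barycenter lies toward that facet, making $\eta$ large positive). This slip is harmless for the ``only if'' direction, since positive spanning of the ray generators produces a ray whose pairing with $b_P\neq 0$ has whichever sign you need; the paper instead finds the bad facet by intersecting the line through $b_P$ and the origin with $\partial P$.

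The genuine gap is in the ``if'' direction. Your reduction to invariant prime divisors rests on the claim that ``$\eta$ is linear in the components of $D$,'' which is false: $\eta(D)$ is built from $\int_0^\infty \vol_X(-K_X-xD)\,dx$, and the volume function is not linear (not even additive) in $D$. Linear equivalence does let you replace an arbitrary nonzero effective Weil divisor by an effective invariant one, since $\eta$ depends only on the class (Lemma \ref{proport_lem}); but you are then left with all divisors $D=\sum_\rho a_\rho D_\rho$ with $a_\rho\geq 0$, and knowing $\eta(D_\rho)\geq 0$ (indeed $=0$ when $b_P=0$) for each prime $D_\rho$ gives no control on $\eta$ of the sums. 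The paper never attempts such a reduction: its ``if'' direction is Remark \ref{toric_rmk}, which invokes the Berman--Berndtsson theorem that $b_P=0$ implies $X$ admits a K\"ahler--Einstein metric, Berman's theorem that this implies K-semistability, and the easy implication (Remark \ref{divst_rmk}) that K-semistability implies divisorial semistability. So the step you treat as a routine reduction is in fact the deep half of the theorem, and your proposal contains no argument for it.
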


By the similar argument, we show that divisorial (semi)stability can be interpreted 
by a certain structure property of the Okounkov body of $-K_X$. 
We remark that the relationship between K-stability and Okounkov bodies has 
been already pointed out in \cite{nystrom}.

\begin{thm}[{see Theorem \ref{Okounkov_thm}}]\label{intoOkounkov_thm}
Let $X$ be a $\Q$-Fano variety of dimension $n$ and 
\[
Z_\bullet\colon X=Z_0\supset Z_1\supset\dots\supset Z_n=\{\text{point}\}
\]
be an admissible flag in the sense of \cite[(1.1)]{LM}. 
Let $\Delta(-K_X)\subset\R^n$ be the Okounkov body of $-K_X$ with respects to 
$Z_\bullet$ in the sense of \cite{LM}. If the pair $(X, -K_X)$ is K-stable 
$($resp.\ K-semistable$)$, then the first coordinate $b_1$ of the barycenter of 
$\Delta(-K_X)$ satisfies that $b_1<1$ $($resp.\ $b_1\leq 1$$)$.
\end{thm}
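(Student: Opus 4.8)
The plan is to connect the barycenter of the Okounkov body to the divisorial stability invariant $\eta(D)$ for a suitable divisor $D$, and then invoke the fact that K-(semi)stability implies divisorial (semi)stability. The natural candidate is to take $D=Z_1$, the first member of the admissible flag, since the first coordinate of the valuation defining the Okounkov body is precisely the order of vanishing along $Z_1$. Let me first recall from \cite{LM} that the Okounkov body $\Delta(-K_X)$ is cut out so that its projection to the first coordinate records, for each value $x$, the Okounkov body of the ``restricted'' or ``residual'' linear series obtained by subtracting $x Z_1$. Concretely, the Lazarsfeld--Musta\c{t}\u{a} slicing theorem tells us that the fiber of $\Delta(-K_X)$ over first-coordinate value $x$ has Euclidean volume (suitably normalized) equal to $\vol_X(-K_X-xZ_1)/(n-1)!$, while the total volume of $\Delta(-K_X)$ is $\vol_X(-K_X)/n!$.

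With this slicing in hand, I would compute the first coordinate $b_1$ of the barycenter directly. By definition,
\[
b_1=\frac{\int_{\Delta(-K_X)} t_1\, d\mathbf{t}}{\int_{\Delta(-K_X)} d\mathbf{t}}
=\frac{\int_0^{\infty} x\cdot\frac{\vol_X(-K_X-xZ_1)}{(n-1)!}\,dx}{\frac{\vol_X(-K_X)}{n!}}.
\]
Integration by parts on the numerator, using that $\vol_X(-K_X-xZ_1)$ is continuous, eventually vanishes, and has the property $-\frac{d}{dx}\int_x^\infty \vol_X(-K_X-sZ_1)\,ds=\vol_X(-K_X-xZ_1)$, rewrites $\int_0^\infty x\,\vol_X(-K_X-xZ_1)\,dx$ as $\int_0^\infty\left(\int_x^\infty\vol_X(-K_X-sZ_1)\,ds\right)dx$, which after switching the order of integration becomes $\int_0^\infty\int_0^\infty\vol_X(-K_X-sZ_1)\,ds$ weighted appropriately. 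The key algebraic identity I expect to extract is that the condition $b_1\le 1$ is equivalent, after clearing denominators, to $\eta(Z_1)\ge 0$, i.e.\ to divisorial semistability along $Z_1$; and strict inequality $b_1<1$ corresponds to $\eta(Z_1)>0$.

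Once that equivalence is established, the theorem follows immediately: if $(X,-K_X)$ is K-semistable (resp.\ K-stable), then it is divisorially semistable (resp.\ divisorially stable) along every nonzero effective Weil divisor by Remark~\ref{divvsK_rmk}, in particular along $D=Z_1$ (which is an irreducible prime divisor, hence a nonzero effective Weil divisor, as the first member of an admissible flag). Therefore $\eta(Z_1)\ge 0$ (resp.\ $>0$), and by the computation above $b_1\le 1$ (resp.\ $b_1<1$).

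The main obstacle I anticipate is the careful bookkeeping in the integration-by-parts / Fubini step that converts the moment integral $\int_0^\infty x\,\vol_X(-K_X-xZ_1)\,dx$ into an expression matching the definition of $\eta(Z_1)=\vol_X(-K_X)-\int_0^\infty\vol_X(-K_X-xZ_1)\,dx$. The two expressions do not match on the nose, so I expect to need the homogeneity and concavity properties of the volume function along the ray $-K_X-xZ_1$ to massage one integral into the other; in particular I must verify that the boundary terms at $x=0$ and $x\to\infty$ vanish, which relies on $\vol_X(-K_X-xZ_1)$ being a polynomial of degree $n$ on each chamber with the correct decay. A secondary subtlety is ensuring the slicing theorem of \cite{LM} applies verbatim when $X$ is only $\Q$-Fano (mildly singular) rather than smooth; this should be fine because the Okounkov body construction and the volume-slicing formula depend only on the valuation and the asymptotic behavior of $h^0$, both of which are insensitive to log-terminal singularities.
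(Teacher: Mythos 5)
Your overall strategy---identify the condition $b_1\le 1$ with $\eta(Z_1)\ge 0$ via the Lazarsfeld--Musta\c{t}\u{a} slicing theorem, then quote that K-(semi)stability implies divisorial (semi)stability along the prime divisor $Z_1$---is exactly the paper's route (Theorem \ref{Okounkov_thm} combined with Remark \ref{divst_rmk}). But your key slicing formula is wrong, and this is a genuine gap, not bookkeeping. The slice of the Okounkov body over first coordinate $x$ computes the \emph{restricted} volume, not the full volume: by \cite[Corollary 4.25]{LM} one has
\[
\vol_{\R^{n-1}}\left(\Delta(-K_X)|_{\nu_1=x}\right)=\frac{1}{(n-1)!}\vol_{X|Z_1}(-K_X-xZ_1),
\]
whereas the full volume $\vol_X(-K_X-xZ_1)/n!$ is the volume of the \emph{superlevel set} $\Delta(-K_X)\cap\{\nu_1\ge x\}$ (\cite[Theorem 4.24]{LM}). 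Your two normalizations are already mutually inconsistent: if the slices had volume $\vol_X(-K_X-xZ_1)/(n-1)!$, integrating them in $x$ would have to return the total volume, but $\int_0^\infty\vol_X(-K_X-xZ_1)\,dx/(n-1)!\neq\vol_X(-K_X)/n!$ in general. Your Fubini/integration-by-parts step cannot repair this, since $\int_0^\infty xf(x)\,dx=\int_0^\infty\int_x^\infty f(s)\,ds\,dx$ is an identity and does not change the value. Concretely, for $X=\pr^n$ with $Z_1$ a hyperplane your formula yields $b_1=n\int_0^{n+1}x(n+1-x)^n\,dx/(n+1)^n=n(n+1)/(n+2)$, which is $>1$ for $n\ge 2$ and would contradict the K-semistability of $\pr^n$; the true value is $b_1=1$, matching $\eta(Z_1)=0$.

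The repair is short and lands on the paper's actual proof. Either use the correct slice formula together with Theorem \ref{volint_thm} in the form $\eta(D)=\vol_X(-K_X)-n\int_0^{\tau(D)}x\vol_{X|D}(-K_X-xD)\,dx$, which gives $\eta(Z_1)=\vol_X(-K_X)\,(1-b_1)$ directly; or keep full volumes but compute the moment by the layer-cake identity $\int_{\Delta}\nu_1=\int_0^\infty\vol_{\R^n}\left(\Delta\cap\{\nu_1\ge x\}\right)dx$, which gives $b_1=\int_0^\infty\vol_X(-K_X-xZ_1)\,dx/\vol_X(-K_X)$ and hence $b_1\le 1\iff\eta(Z_1)\ge 0$ by Definition \ref{intro_dfn}. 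Finally, your parenthetical dismissal of the singular case is too quick: since $X$ is only $\Q$-Fano, the paper first passes to a small $\Q$-factorial modification $\sigma\colon\tilde{X}\to X$, observes that $\sigma$ is an isomorphism near the point $Z_n$ so the flag lifts with the same Okounkov body, and uses $\Exc(\sigma)=\B_+(-K_{\tilde{X}})$ (so $\tilde{Z}_1\not\subset\B_+(-K_{\tilde{X}})$, as in Proposition \ref{vr_prop}) to make \cite[Corollary 4.25]{LM} applicable; ``insensitivity to log-terminal singularities'' is not by itself a justification.
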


We also see that we can calculate the Donaldson-Futaki invariants 
of the basic semi test configurations 
via intersection numbers after we run certain minimal model program 
(\emph{MMP}, in short) with scaling (see Section \ref{MMP_section}). 
Thus divisorial (semi)stability is easy to test. 
In fact, we determine divisorial 
(semi)stability for all smooth $X$ of dimension at most three 
(see Proposition \ref{surf_prop} and Theorem \ref{three_thm}). 
As an immediate corollary, we find many (possibly non-toric) 
non-K\"ahler-Einstein Fano manifolds 
of dimension three. (We heavily depends on the classification result of 
Mori and Mukai \cite{MoMu}.)

\begin{thm}[{see Theorem \ref{three_thm}}]\label{introthree_thm}
Let $X$ be a non-toric Fano manifold of dimension three. 
Assume that $X$ belongs to one of the following list 
\begin{itemize}
\item
No.\ 23, No.\ 28, No.\ 30 or No.\ 31 in Table 2 $(\rho(X)=2)$, 
\item
No.\ 14, No.\ 16, No.\ 18, No.\ 21, No.\ 22, No.\ 23 or No.\ 24 in Table 3 
$(\rho(X)=3)$, or
\item
No.\ 5 or No.\ 8 in Table 4 $(\rho(X)=4)$
\end{itemize}
in \cite{MoMu}. Then the pair $(X, -K_X)$ is not K-semistable. 
In particular, $X$ does not admit K\"ahler-Einstein metrics. 
\end{thm}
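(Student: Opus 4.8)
The plan is to prove Theorem \ref{introthree_thm} by exhibiting, for each Fano threefold $X$ in the list, a single nonzero effective divisor $D$ that violates divisorial semistability, i.e.\ for which $\eta(D)<0$. By Remark \ref{divvsK_rmk}, divisorial semistability is implied by K-semistability, so producing such a $D$ certifies that $(X,-K_X)$ is \emph{not} K-semistable. The key computational tool is Definition \ref{intro_dfn}\eqref{intro_dfn1}: the quantity
\[
\eta(D)=\vol_X(-K_X)-\int_0^{\infty}\vol_X(-K_X-xD)\,dx
\]
is a pure intersection-theoretic expression, so the entire argument reduces to evaluating volume functions of the form $\vol_X(-K_X-xD)$ as $x$ ranges over the pseudoeffective threshold of $-K_X-xD$.

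The natural choice of $D$ in each case is a distinguished extremal divisor coming from the Mori-Mukai geometry: since every variety on the list has Picard rank $\rho(X)\in\{2,3,4\}$, each carries several contractions, and a divisor contracted by (or a fiber-type divisor of) one of these extremal rays is the obvious candidate for destabilization. First I would, for a fixed $X$ and a fixed $D$, determine the $\R$-divisor class of $-K_X-xD$ in $\ND(X)_\R$ and locate the threshold $\tau=\sup\{x\mid -K_X-xD\text{ is pseudoeffective}\}$. Then I would compute $\vol_X(-K_X-xD)$ as a piecewise-polynomial function of $x$ on $[0,\tau]$; the breakpoints are precisely the values of $x$ at which the Zariski-type decomposition of $-K_X-xD$ changes, equivalently where the MMP with scaling (Section \ref{MMP_section}) performs a contraction or flip. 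Running this MMP and reading off intersection numbers on the successive models is exactly the mechanism advertised after Theorem \ref{intoOkounkov_thm}, and it turns each volume integral into a finite sum of polynomial integrals with explicitly computable coefficients.

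Concretely, the workflow for a single entry is: (i) use the Mori-Mukai tables in \cite{MoMu} to read off the two (or three, or four) extremal contractions and hence generators of $\NE(X)$ and $\Nef(X)$; (ii) identify $-K_X$ and the candidate $D$ in terms of these generators together with the relevant intersection numbers $(-K_X)^3$, $(-K_X)^2\cdot D$, etc.; (iii) run the $(-K_X-xD)$-MMP with scaling, recording the Zariski decomposition of $-K_X-xD$ on each chamber of $[0,\tau]$; (iv) integrate the top self-intersection of the positive part over each chamber to assemble $\int_0^\infty\vol_X(-K_X-xD)\,dx$; and (v) subtract from $(-K_X)^3$ to obtain $\eta(D)$, verifying the sign is negative. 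Since the volume is $\vol_X(-K_X)=(-K_X)^3$ in dimension three and the positive parts are nef, all integrands are genuine cubic polynomials in $x$ on each chamber, so step (iv) is elementary once the chamber structure is known.

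The main obstacle is step (iii): correctly running the MMP with scaling and tracking the Zariski decomposition across all chambers, which requires detailed control of the birational geometry of each individual threefold on the list. The difficulty is not conceptual but bookkeeping-heavy and case-dependent—each of the thirteen entries has its own cone of divisors, its own sequence of flips and divisorial contractions, and its own chamber decomposition of the interval $[0,\tau]$. In practice I would first handle the $\rho(X)=2$ cases, where $\ND(X)_\R$ is two-dimensional and the chamber structure of $-K_X-xD$ is controlled by at most one flip or flop, then bootstrap to $\rho(X)=3$ and $\rho(X)=4$ by using the additional contractions to reduce to lower Picard rank via the MMP. A secondary subtlety is the choice of $D$: for some entries a primitive contracted divisor already destabilizes, but for others one may need $D$ to be a sum or a non-obvious extremal divisor, so verifying that the chosen $D$ actually yields $\eta(D)<0$ (rather than merely $\eta(D)\ge 0$) is where the computation must be carried out with care.
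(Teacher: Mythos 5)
Your proposal is correct and follows essentially the same route as the paper: Theorem \ref{introthree_thm} is proved there exactly by exhibiting, for each listed threefold, a nonzero effective divisor $D$ with $\eta(D)<0$ (so $(X,-K_X)$ is not divisorially semistable, hence not K-semistable by Remark \ref{divst_rmk}), with $\eta(D)$ evaluated chamber-by-chamber along the ample model sequence, i.e.\ the MMP with scaling of Section \ref{MMP_section}, just as you describe. The only differences are in execution: the paper's destabilizers are mostly strict transforms of low-degree divisors through the blowup centers or of negative sections (packaged into Lemmas \ref{rhotwo_lem}, \ref{rhothree_lem} and \ref{three_lem}) rather than exceptional divisors of extremal contractions, a subtlety in the choice of $D$ that you correctly flag as the delicate point.
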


\begin{remark}\label{suss_rmk}
After the author wrote the article, the author found the results \cite{suss, IS}.
It has been already known that 
some (but not all) of $X$ in the list of Theorem \ref{introthree_thm} does not admit 
K\"ahler-Einstein metrics (see \cite[Theorem 1.1]{suss} and \cite[Theorem 6.1]{IS}). 
\end{remark}

The article is organized as follows. In Section \ref{prelim_section}, 
we recall the notion of K-(semi)stability and geography of models. 
We also consider the volume function (and the restricted volume functions) 
of $X$ for possibly non-$\R$-Cartier divisors. 
In Section \ref{div_section}, we define the notion of divisorial stability. 
We construct the basic semi test configuration from a nonzero effective Weil 
divisor. In Proposition \ref{divst_prop}, we see that we can calculate 
the Donaldson-Futaki invariant of basic semi test configurations via the 
growth of the sum of the dimension of certain global sections. 
In Section \ref{RR_section}, we prove a kind of the asymptotic Riemann-Roch theorem 
in order to calculate the Donaldson-Futaki invariants of basic semi test configurations. 
Thanks to the argument in Section \ref{RR_section}, we can rephrase 
divisorial (semi)stability in Section \ref{interpret_section}. 
In Section \ref{toric_section}, we consider the case that $X$ is toric. We see in 
Corollary \ref{toric_cor} that if the barycenter of the associated polytope 
is not the origin, then $(X, -K_X)$ is not divisorially semistable along some 
torus invariant prime divisor. As a corollary, for a non-K-semistable toric $\Q$-Fano 
variety, we can explicitly construct a flag ideal such that the Donaldson-Futaki invariant 
of the semi test configuration obtained by the flag ideal is strictly negative 
(see Examples \ref{toric1_ex} and \ref{toric2_ex}). 
In Section \ref{Okounkov_section}, by the similar argument in Section \ref{toric_section}, 
we show that divisorial stability along prime divisors can be interpreted by 
the structure property of Okounkov bodies of $-K_X$ 
(see Theorem \ref{Okounkov_thm}). 
In Section \ref{MMP_section}, we rephrase the condition of divisorial (semi)stability 
via MMP with scaling. As a corollary, we see the relationship between divisorial 
(semi)stability and slope (semi)stability along divisors. 
In Section \ref{basic_section}, we see some basic properties of divisorial stability. 
Moreover, we see some examples in order to prove Theorem \ref{three_thm}. 
Finally, in Section \ref{three_section}, we determine divisorial (semi)stability for 
all Fano manifolds of dimension three.

\begin{ack}
The author thank Doctor Yuji Odaka, who suggested him to write down 
Section \ref{three_section}, and Professor Toru Tsukioka and Doctor Kazunori 
Yasutake, who informed him the book \cite{mat}. 
The author is partially supported by a JSPS Fellowship for Young Scientists. 
\end{ack}

Throughout this paper, we work in the category of algebraic (separated and of 
finite type) scheme over the complex number field $\C$. 
A \emph{variety} means a reduced and irreducible algebraic scheme. 
For the theory of minimal model program, we refer the readers to \cite{KoMo}. 
For a complete variety $X$, $\rho(X)$ denotes the Picard number of $X$. 
For a normal projective variety $X$, $\Nef(X)$ (resp.\ $\overline{\Eff}(X)$) denotes the nef (resp.\ pseudo-effective) cone, that is, the closure of the cone in $\ND(X)$ 
spanned by classes of nef (resp.\ effective) 
divisors on $X$, and $\BIG(X)$ denotes the interior of the cone $\overline{\Eff}(X)$. 
For a Weil divisor $D$ on a normal projective variety $X$, the 
\emph{divisorial sheaf} on $X$ is denoted by $\sO_X(D)$. More precisely, 
for any open subscheme $U\subset X$, the section of $\sO_X(D)$ on $U$ is defined by 
\[
\{f\in k(X)\,|\,\DIV(f)|_U+D|_U\geq 0\},
\]
where $k(X)$ is the function field of $X$.

\section{Preliminaries}\label{prelim_section}

In this section, we fix a $\Q$-Fano variety $X$ of dimension $n$. 

\subsection{K-stability}\label{K_section}

In this section, we recall the notion of K-stability. 

\begin{definition}[{\cite{tian1, don, RT, odk, odk15}}]\label{K_dfn}
\begin{enumerate}
\renewcommand{\theenumi}{\arabic{enumi}}
\renewcommand{\labelenumi}{(\theenumi)}
\item\label{K_dfn1}
A \emph{flag ideal} $\sI$ is a coherent ideal sheaf $\sI\subset\sO_{X\times\A_t^1}$
of the form 
\[
\sI=I_M+I_{M-1}t^1+\cdots+I_1t^{M-1}+(t^M)\subset\sO_{X\times\A_t^1},
\]
where $\sO_X\supset I_1\supset\cdots\supset I_M$ is a decreasing sequence of 
coherent ideal sheaves. 
\item\label{K_dfn2}
Let $r\in\Z_{>0}$ with $-rK_X$ Cartier. 
A \emph{semi test configuration} $(\sB, \sL)/\A^1$ \emph{of} 
$(X, -rK_X)$ \emph{obtained by} $\sI$ is defined by the following datum: 
\begin{itemize}
\item
$\Pi\colon\sB\to X\times\A^1$ is the blowup along $\sI$, and $E_\sB\subset\sB$ 
is the Cartier divisor defined by $\sO_\sB(-E_\sB)=\sI\cdot\sO_\sB$,
\item
$\sL:=\Pi^*p_1^*\sO_X(-rK_X)\otimes\sO_\sB(-E_\sB)$, where 
$p_1\colon X\times\A^1\to X$ is the first projection, 
\end{itemize}
and we require the following: 
\begin{itemize}
\item
$\sI$ is not of the form $(t^M)$, and
\item
$\sL$ is semiample  over $\A^1$.
\end{itemize}
\item\label{K_dfn3}
Let $\alpha\colon(\sB, \sL)\to\A^1$ be the semi test configuration of $(X, -rK_X)$
obtained by $\sI$. Then the multiplicative group $\G_m$ naturally acts on 
$(\sB, \sL)$ and the morphism $\alpha$ is $\G_m$-invariant, where the action 
$\G_m\times\A^1\to\A^1$ is in a standard way $(a, t)\mapsto at$. 
For $k\in\Z_{>0}$, $\G_m$ also naturally acts on $(\alpha_*\sL^{\otimes k})|_{\{0\}}$. 
Let $w(k)$ be the total weight of the action. It is known that $w(k)$ is a polynomial 
of degree at most $n+1$ for $k\gg 0$. Let $w_{n+1}$, $w_n$ be the $(n+1)$-th, 
$n$-th coefficient of $w(k)$, respectively. We define the 
\emph{Donaldson-Futaki invariant} $\DF(\sB, \sL)$ of $(\sB, \sL)/\A^1$ such that 
\begin{eqnarray*}
\DF(\sB, \sL) & := & \frac{\left((-rK_X)^{\cdot n-1}\cdot(-K_X)\right)}
{2\cdot(n-1)!}w_{n+1}-
\frac{((-rK_X)^{\cdot n})}{n!}w_n\\
 & = & \frac{((-rK_X)^{\cdot n})}{n!}\left(\frac{n}{2r}w_{n+1}-w_n\right).
\end{eqnarray*}
\item\label{K_dfn4}
We say that the pair $(X, -K_X)$ is \emph{K-stable} (resp.\ \emph{K-semistable}) 
if $\DF(\sB, \sL)>0$ (resp.\ $\geq 0$) holds for any $r\in\Z_{>0}$, for any flag ideal 
$\sI$, and for any semi test configuration $(\sB, \sL)/\A^1$ of $(X, -rK_X)$ 
obtained by $\sI$. 
\end{enumerate}
\end{definition}

\subsection{On geography of models}\label{KKL_section}

In this section, we recall the theory of ``geography of models" introduced in 
\cite[Section 6]{shokurov}. 
For the notation in this section, we refer the readers to \cite{KKL}. 

\begin{definition}[{\cite[Definition 2.3]{KKL}}]\label{KKL_dfn}
Let $V$ be a normal projective variety, $E_V$ be an $\R$-Cartier $\R$-divisor 
on $V$, and $\phi\colon V \dashrightarrow W$ be a contraction map to a normal 
projective variety $W$ such that $E_W:=\phi_*E_V$ is $\R$-Cartier. 
\begin{enumerate}
\renewcommand{\theenumi}{\arabic{enumi}}
\renewcommand{\labelenumi}{(\theenumi)}
\item\label{KKL_dfn1}
The map $\phi$ is said to be \emph{$E_V$-nonpositive} if $\phi$ is birational, and for a 
common resolution $(p, q)\colon\tilde{V}\to V\times W$, we can write 
$p^*E_V=q^*E_W+F$, where $F$ is effective and $q$-exceptional. 
\item\label{KKL_dfn2}
The map $\phi$ is said to be a \emph{semiample model of $E_V$} if $\phi$ is 
$E_V$-nonpositive and $E_W$ is semiample. 
\item\label{KKL_dfn3}
The map $\phi$ is said to be the \emph{ample model of $E_V$} is there exist a 
birational contraction map $\phi'\colon V\dashrightarrow W'$ and a morphism 
$\psi\colon W'\to W$ with connected fibers such that $\phi=\psi\circ\phi'$, 
the map $\phi'$ is a semiample model of $E_V$ and 
$\phi'_*E_V=\psi^*A$, where $A$ is an ample $\R$-divisor on $W$. 
\end{enumerate}
\end{definition}

Throughout the end of the section, we fix a nonzero effective Weil divisor $D$ 
on $X$. 
By \cite[Corollary 1.4.3]{BCHM}, we can take a projective small $\Q$-factorial 
modification morphism $\sigma\colon\tilde{X}\to X$, that is, $\sigma$ is a projective birational morphism which is isomorphism in codimension one and $\tilde{X}$ is 
$\Q$-factorial. We set $\tilde{D}:=\sigma^{-1}_*D$. 

\begin{lemma}\label{MDS_lem}
The variety $\tilde{X}$ is a Mori dream space in the sense of \cite[Definition 1.10]{HK}. 
\end{lemma}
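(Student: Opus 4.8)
The plan is to show that $\tilde{X}$ is a Mori dream space by verifying the conditions in the definition of Hu--Keel, and the key point is that $\tilde{X}$ carries enough positivity coming from the fact that $X$ is $\Q$-Fano. First I would recall that $\tilde{X}$ is a normal projective $\Q$-factorial variety by construction, since $\sigma\colon\tilde{X}\to X$ is a small $\Q$-factorial modification. By the Hu--Keel criterion, it suffices to produce a boundary $\R$-divisor (or rather a suitable family of divisors) so that $\tilde{X}$ becomes a log Fano type variety, because log Fano type varieties are known to be Mori dream spaces by the results of \cite{BCHM}.

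The heart of the argument is to establish that $\tilde{X}$ is of \emph{Fano type}. Since $X$ is $\Q$-Fano, $-K_X$ is an ample $\Q$-Cartier divisor and $(X,0)$ has at most log-terminal singularities. Because $\sigma$ is small (an isomorphism in codimension one), we have $\sigma^*(-K_X) = -K_{\tilde{X}}$, and $-K_{\tilde{X}}$ is big and $\sigma$-nef. I would then choose a small ample $\Q$-divisor $A$ on $\tilde{X}$ together with an effective $\Q$-divisor $\Delta$ so that $-K_{\tilde{X}} \sim_\Q A + \Delta$ with $(\tilde{X},\Delta)$ klt; concretely, since $-K_X$ is ample on $X$ and $\sigma$ is small, a general member of a suitable multiple of $-K_{\tilde{X}}$ provides such a decomposition, and the klt property is preserved because $\tilde{X}\to X$ is crepant and $X$ is klt. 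This exhibits $(\tilde{X},\Delta)$ as a klt weak log Fano pair, hence of Fano type.

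Given that $\tilde{X}$ is of Fano type, I would invoke \cite[Corollary 1.3.2]{BCHM}, which states precisely that $\Q$-factorial klt Fano type varieties are Mori dream spaces in the sense of Hu--Keel. This gives finite generation of the Cox ring and all the requisite properties: the Picard group is finitely generated, the nef cone is rational polyhedral generated by semiample classes, and the effective cone decomposes into finitely many Mori chambers. The main obstacle I expect is the bookkeeping needed to certify that the small $\Q$-factorial modification $\tilde{X}$ remains of Fano type after passing through $\sigma$. The subtle point is that while bigness and $\Q$-factoriality are clear, one must verify that no discrepancies are worsened; this follows because $\sigma$ is an isomorphism in codimension one, so it extracts no divisors and $K_{\tilde{X}} = \sigma^* K_X$, keeping the pair klt. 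Once this crepant compatibility is in place, the rest is a direct citation of the structure theory from \cite{BCHM}.
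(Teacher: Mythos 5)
Your proposal is correct and follows essentially the same route as the paper: observe that $\tilde{X}$ is projective, $\Q$-factorial and klt with $-K_{\tilde{X}}=\sigma^*(-K_X)$ nef and big (since $\sigma$ is small and crepant), perturb to an effective $\Q$-divisor $\Delta$ with $(\tilde{X},\Delta)$ klt and $-(K_{\tilde{X}}+\Delta)$ ample, and conclude by \cite[Corollary 1.3.2]{BCHM}. The paper's proof is exactly this, stated in three lines, so no further comparison is needed.
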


\begin{proof}
Since $\tilde{X}$ is projective, having at most log-terminal singularities 
and $-K_{\tilde{X}}$ is nef and big, 
there exists an effective $\Q$-divisor $\Delta$ on $\tilde{X}$ such that 
the pair $(\tilde{X}, \Delta)$ is klt and $-(K_{\tilde{X}}+\Delta)$ is an ample $\Q$-divisor. 
Thus the assertion follows by \cite[Corollary 1.3.2]{BCHM}.
\end{proof}

\begin{definition}
Let $X$, $D$ and $\sigma$ be as above. 
The \emph{pseudo-effective threshold} $\tau(D)$ \emph{of} $D$ 
\emph{with respects to} $(X, -K_X)$ is defined by: 
\[
\tau(D):=\max\{\tau\in\R_{>0}\,\,|\,\,-K_{\tilde{X}}-\tau\tilde{D}
\in\overline{\Eff}(\tilde{X})\}.
\]
\end{definition}

The following theorem is important in this paper. 

\begin{thm}[{\cite[Theorem 4.2]{KKL}}]\label{model_thm}
Let $X$, $D$ and $\sigma$ be as above. 
Then there exist 
\begin{itemize}
\item
an increasing sequence of rational numbers
\[
0=\tau_0<\tau_1<\cdots<\tau_m=\tau(D),
\] 
\item
normal projective varieties
$X_1,\dots,X_m$, and 
\item
mutually distinct birational contraction maps
$\phi_i\colon\tilde{X}\dashrightarrow X_i$  
$(1\leq i\leq m)$
\end{itemize}
such that the following hold: 
\begin{itemize}
\item
for any $x\in[\tau_{i-1}, \tau_i]$, the map $\phi_i$ is a semiample model 
of $-K_{\tilde{X}}-x\tilde{D}$, and 
\item
if $x\in(\tau_{i-1}, \tau_i)$, then the map $\phi_i$ is the ample model of 
$-K_{\tilde{X}}-x\tilde{D}$. 
\end{itemize}
\end{thm}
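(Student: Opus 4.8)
The plan is to deduce the whole statement from the finite generation furnished by Lemma \ref{MDS_lem}, by intersecting the Mori chamber decomposition of $\tilde{X}$ with the segment $x\mapsto -K_{\tilde{X}}-x\tilde{D}$. Since $\tilde{X}$ is $\Q$-factorial and $\tilde{D}$ is a Weil divisor, $\tilde{D}$ is $\Q$-Cartier, so $-K_{\tilde{X}}-x\tilde{D}$ is an $\R$-Cartier $\R$-divisor for every $x\in\R$ and is a $\Q$-divisor whenever $x\in\Q$; at $x=0$ it equals $-K_{\tilde{X}}$, which is nef and big as seen in the proof of Lemma \ref{MDS_lem}. First I would form the divisorial ring
\[
R:=\bigoplus_{(p,q)\in\N^2}H^0\bigl(\tilde{X},\, \sO_{\tilde{X}}(\lfloor p(-K_{\tilde{X}})-q\tilde{D}\rfloor)\bigr),
\]
which is a finitely generated $\C$-algebra precisely because $\tilde{X}$ is a Mori dream space.

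Next I would invoke the structure theory of finitely generated divisorial rings. The cone $\mathcal{C}\subset\ND(\tilde{X})_\R$ swept out by the classes $p(-K_{\tilde{X}})-q\tilde{D}$ with nonzero graded piece is rational polyhedral, and it decomposes into finitely many rational polyhedral chambers on the relative interior of each of which the ample model is constant. The rationality of the walls separating these chambers is the essential structural output of finite generation. I would then intersect this decomposition with the ray $x\mapsto -K_{\tilde{X}}-x\tilde{D}$ for $x\in[0,\tau(D)]$. Each crossing of a wall produces a breakpoint $\tau_i$, and since there are only finitely many chambers, only finitely many breakpoints $0=\tau_0<\tau_1<\cdots<\tau_m$ occur; each $\tau_i$ is rational because it is the intersection of a rational line with a rational wall. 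On each open interval $(\tau_{i-1},\tau_i)$ the divisor $-K_{\tilde{X}}-x\tilde{D}$ is big and lies in the relative interior of a single chamber, so its ample model $\phi_i\colon\tilde{X}\dashrightarrow X_i$ is defined and independent of $x$; after merging any consecutive intervals sharing the same model, the $\phi_i$ may be taken mutually distinct. The last breakpoint is $\tau_m=\tau(D)$, where the ray leaves $\overline{\Eff}(\tilde{X})$.

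The remaining and most delicate point is to promote ``ample model on $(\tau_{i-1},\tau_i)$'' to ``semiample model on the closed interval $[\tau_{i-1},\tau_i]$'' in the sense of Definition \ref{KKL_dfn}\,\eqref{KKL_dfn2}. The hard part is the analysis exactly at the wall values $x=\tau_{i-1}$ and $x=\tau_i$: there $-K_{\tilde{X}}-x\tilde{D}$ sits on the boundary of the chamber, and one must verify that $\phi_i$ still satisfies the definition. Concretely, on a common resolution $(p,q)\colon\tilde{V}\to\tilde{X}\times X_i$ one writes
\[
p^*(-K_{\tilde{X}}-x\tilde{D})=q^*\bigl((\phi_i)_*(-K_{\tilde{X}}-x\tilde{D})\bigr)+F_x,
\]
with $F_x$ effective and $q$-exceptional for $x$ in the open interval, and the task is to show that effectivity and $q$-exceptionality of $F_x$, together with semiampleness of the pushforward on $X_i$, all persist in the limits $x\to\tau_{i-1}^{+}$ and $x\to\tau_i^{-}$. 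I expect this boundary analysis to be the principal obstacle. Effectivity and exceptionality should pass to the closed limit by continuity of the Nakayama--Zariski decomposition within a fixed chamber, since the positive and negative parts vary linearly there; semiampleness \emph{at} the wall---rather than mere nefness---is exactly where I would again exploit the finite generation of $R$, which guarantees that the section ring of $(\phi_i)_*(-K_{\tilde{X}}-x\tilde{D})$ on $X_i$ is finitely generated and hence that this divisor remains semiample at the endpoints as well.
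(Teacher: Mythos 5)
Your proposal is correct and takes essentially the same route as the paper: the paper's entire proof is to combine Lemma \ref{MDS_lem} (finite generation of divisorial rings on $\tilde{X}$, via \cite[Corollary 1.3.2]{BCHM}) with the cited result \cite[Theorem 4.2]{KKL}, and your argument---the rational polyhedral chamber decomposition attached to a finitely generated divisorial ring, intersected with the segment $x\mapsto -K_{\tilde{X}}-x\tilde{D}$, followed by the wall analysis upgrading ample models on open intervals to semiample models on closed ones---is precisely the content of that cited theorem, built on the same finite-generation input. The only difference is that you unpack the black box instead of citing it, so the boundary analysis you flag as the delicate point is exactly the work the paper delegates to \cite{KKL}.
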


\begin{proof}
Follows immediately from Lemma \ref{MDS_lem} and \cite[Theorem 4.2]{KKL}. 
\end{proof}

\begin{definition}\label{model_dfn}
The sequence $\{(\tau_i, X_i)\}_{1\leq i\leq m}$ obtained in Theorem \ref{model_thm} 
is called the \emph{ample model sequence of} $(X, -K_X; -D)$. We set 
$D_i:=(\phi_i)_*\tilde{D}$ for $1\leq i\leq m$. 
\end{definition}

\begin{remark}\label{model_rmk}
\begin{enumerate}
\renewcommand{\theenumi}{\arabic{enumi}}
\renewcommand{\labelenumi}{(\theenumi)}
\item\label{model_rmk1}
Since $\tau_{i-1}<\tau_i$, both $K_{X_i}$ and $D_i$ are $\Q$-Cartier divisors on $X_i$. 
\item\label{model_rmk2}
For any $x\in(\tau_{i-1}, \tau_i)\cap\Q$ and for any $k_0\in\Z_{>0}$ with 
$-k_0K_{\tilde{X}}-k_0x\tilde{D}$ Cartier, we have 
\[
X_i\simeq\Proj\bigoplus_{k\geq 0}H^0\left(\tilde{X}, \sO_{\tilde{X}}(-kk_0K_{\tilde{X}}
-kk_0x\tilde{D})\right)
\]
by \cite[Remark 2.4 (i)]{KKL}. Since $\sigma\colon\tilde{X}\to X$ is small, 
the above is also isomorphic to 
\[
\Proj\bigoplus_{k\geq 0}H^0\left(X, \sO_X(-kk_0K_X-kk_0xD)\right).
\]
Thus the ample model sequence of $(X, -K_X; -D)$ does not depend on the choice 
of $\sigma$. In particular, the pseudo-effective threshold $\tau(D)$ does not 
depend on the choice of $\sigma$ and $\tau(D)\in\Q_{>0}$ holds. 
\item\label{model_rmk3}
The map $\phi_i$ is $K_{\tilde{X}}$-nonpositive since $-K_{\tilde{X}}$ is nef (see 
\cite[Lemma 2.5]{KKL}). 
Hence $X_i$ has at most log-terminal singularities.
\end{enumerate}
\end{remark}

\subsection{On the volume functions}\label{volfcn_section}

In this section, we recall the theory of the volume functions and the restricted 
volume functions. We refer the readers to \cite{L} and \cite{LM}. 
In this section, we fix a projective small $\Q$-factorial modification 
$\sigma\colon\tilde{X}\to X$ and we set $\tilde{D}:=\sigma^{-1}_*D$ as in 
Section \ref{KKL_section}. 

\begin{definition}\label{vol_dfn}
For any $x\in[0, +\infty)$, we define 
\[
\vol_X(-K_X-xD):=\vol_{\tilde{X}}\left(-K_{\tilde{X}}-x\tilde{D}\right), 
\]
where $\vol_{\tilde{X}}$ is the volume function on $\tilde{X}$ 
(see \cite[Corollary 2.2.45]{L}). Thus the function $\vol_X(-K_X-xD)$ is continuous 
over $[0, +\infty)$, and for any $x\in[0, +\infty)\cap\Q$, 
$\vol_X(-K_X-xD)$ is equal to 
\begin{eqnarray*}
\limsup_{k\to\infty}
\frac{h^0\left(\tilde{X}, \sO_{\tilde{X}}(-kK_{\tilde{X}}+\lfloor-kx\tilde{D}\rfloor)\right)}
{k^n/n!}
=\limsup_{k\to\infty}
\frac{h^0\left(X, \sO_X(-kK_X+\lfloor-kxD\rfloor)\right)}
{k^n/n!}, 
\end{eqnarray*}
where $\lfloor\bullet\rfloor$ is the round-down 
(see \cite[Notation 0.4 (12)]{KoMo}). 
In particular, $\vol_X(-K_X-xD)$ does not depend on the choice of $\sigma$. 
\end{definition}

\begin{lemma}\label{vol_lem}
Let $x\in[0, +\infty)$. 
\begin{enumerate}
\renewcommand{\theenumi}{\arabic{enumi}}
\renewcommand{\labelenumi}{(\theenumi)}
\item\label{vol_lem1}
$\vol_X(-K_X-xD)=0$ holds if and only if $x\geq\tau(D)$. 
\item\label{vol_lem2}
If $x\in[\tau_{i-1}, \tau_i]$, then $\vol_X(-K_X-xD)=((-K_{X_i}-xD_i)^{\cdot n})$. 
\end{enumerate}
\end{lemma}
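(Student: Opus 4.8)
The plan is to treat the two parts separately, translating the volume into properties of the class $-K_{\tilde X}-x\tilde D$ on the small $\Q$-factorial modification $\tilde X$, which is legitimate since $\vol_X(-K_X-xD)$ is by definition $\vol_{\tilde X}(-K_{\tilde X}-x\tilde D)$.

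For \eqref{vol_lem1}, I would start from the standard characterization (see \cite[Section 2.2]{L}) that for an $\R$-Cartier $\R$-divisor $V$ on $\tilde X$ one has $\vol_{\tilde X}(V)>0$ if and only if $V$ is big, equivalently $V\in\BIG(\tilde X)=\operatorname{int}(\overline{\Eff}(\tilde X))$. Since $\sigma$ is small and $-K_X$ is ample, $-K_{\tilde X}=\sigma^*(-K_X)$ is nef and big, so it lies in the interior of $\overline{\Eff}(\tilde X)$. By the definition of $\tau(D)$ together with convexity of $\overline{\Eff}(\tilde X)$, the ray $x\mapsto -K_{\tilde X}-x\tilde D$ meets $\overline{\Eff}(\tilde X)$ exactly for $x\in[0,\tau(D)]$. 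For $x\in[0,\tau(D))$ I would write
\[
-K_{\tilde X}-x\tilde D=\left(1-\tfrac{x}{\tau(D)}\right)(-K_{\tilde X})+\tfrac{x}{\tau(D)}\left(-K_{\tilde X}-\tau(D)\tilde D\right),
\]
expressing the class as a convex combination of the interior point $-K_{\tilde X}$ (with positive weight) and the point $-K_{\tilde X}-\tau(D)\tilde D\in\overline{\Eff}(\tilde X)$; the elementary fact that such a combination lies in the interior shows $-K_{\tilde X}-x\tilde D$ is big, hence $\vol_X(-K_X-xD)>0$. For $x=\tau(D)$ the class lies on the boundary of $\overline{\Eff}(\tilde X)$, and for $x>\tau(D)$ it is not pseudo-effective; in either case it fails to be big, so the volume vanishes. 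This yields \eqref{vol_lem1}.

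For \eqref{vol_lem2}, fix $x\in[\tau_{i-1},\tau_i]$ and set $E:=-K_{\tilde X}-x\tilde D$. By Theorem \ref{model_thm} the map $\phi_i\colon\tilde X\dashrightarrow X_i$ is a semiample model of $E$, hence in particular $E$-nonpositive: choosing a common resolution $(p,q)\colon W\to\tilde X\times X_i$ we may write $p^*E=q^*E_{X_i}+F$ with $F$ effective and $q$-exceptional, where $E_{X_i}=(\phi_i)_*E=-K_{X_i}-xD_i$. Since the volume is preserved under pullback by a birational morphism and is unchanged by adding an effective exceptional divisor, I would compute
\[
\vol_X(-K_X-xD)=\vol_{\tilde X}(E)=\vol_W(p^*E)=\vol_W(q^*E_{X_i}+F)=\vol_W(q^*E_{X_i})=\vol_{X_i}(E_{X_i}).
\]
Because $\phi_i$ is a semiample model, $E_{X_i}=-K_{X_i}-xD_i$ is semiample, hence nef, and $K_{X_i}$, $D_i$ are $\Q$-Cartier by Remark \ref{model_rmk}\eqref{model_rmk1}, so the self-intersection below is defined. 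For a nef $\R$-Cartier divisor the volume equals its top self-intersection, whence $\vol_{X_i}(E_{X_i})=((-K_{X_i}-xD_i)^{\cdot n})$, which is \eqref{vol_lem2}.

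The convex-geometry step in \eqref{vol_lem1} and the identity ``volume $=$ top self-intersection'' for nef divisors are standard. The point requiring care is the birational transformation rule used in \eqref{vol_lem2}: one must justify that adding the effective $q$-exceptional divisor $F$ leaves the volume unchanged, equivalently that $q_*\sO_W(\lfloor k(q^*E_{X_i}+F)\rfloor)=q_*\sO_W(\lfloor kq^*E_{X_i}\rfloor)$ for all $k$, and then combine this with the invariance of volume under the birational morphisms $p$ and $q$. This is exactly where the $E$-nonpositivity of Definition \ref{KKL_dfn}\eqref{KKL_dfn1} is used.
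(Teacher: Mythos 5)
Your proposal is correct, and for part \eqref{vol_lem2} it takes a genuinely different route from the paper. For \eqref{vol_lem1} you and the paper argue the same way (volume positive iff big, and bigness holds exactly on $[0,\tau(D))$); your convex-combination argument just makes the paper's one-line claim explicit. For \eqref{vol_lem2}, the paper first reduces to $x\in(\tau_{i-1},\tau_i)\cap\Q$ by continuity of both sides, then uses the ample-model identification of section rings (\cite[Remark 2.4 (i)]{KKL}) to compute $\vol_X(-K_X-xD)$ as the growth of $h^0\left(X_i, \sO_{X_i}(-kK_{X_i}+\lfloor -kxD_i\rfloor)\right)$, and concludes by Serre vanishing since $-K_{X_i}-xD_i$ is ample at such $x$; sections are compared directly and no birational volume calculus is needed. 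You instead work with every $x$ in the \emph{closed} interval at once: the semiample-model property of $\phi_i$ from Theorem \ref{model_thm} gives $E$-nonpositivity, hence $p^*E=q^*E_{X_i}+F$ with $F$ effective and $q$-exceptional, and you invoke birational invariance of the volume (pullback invariance plus insensitivity to adding effective exceptional divisors) together with ``volume equals top self-intersection for nef classes.'' Your route buys uniformity — no continuity reduction, no restriction to rational $x$, and the endpoints $\tau_{i-1},\tau_i$ are covered directly — at the cost of having to justify the transformation rule for volumes of $\R$-divisors under $E$-nonpositive maps (the step you rightly flag; note that for irrational $x$ both $E_{X_i}$ and $F$ are genuine $\R$-divisors, so one either quotes the $\R$-divisor version of $q_*\sO_W(\lfloor kq^*E_{X_i}+kF\rfloor)=\sO_{X_i}(\lfloor kE_{X_i}\rfloor)$-type statements or inserts the same continuity reduction the paper uses). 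The paper's route avoids that machinery entirely but leans on the stronger ampleness available only at interior rational points.
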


\begin{proof}
\eqref{vol_lem1}
The $\R$-divisor $-K_{\tilde{X}}-x\tilde{D}$ is big if and only if $x\in[0, \tau(D))$. 
Thus the assertion follows. 

\eqref{vol_lem2}
Both $\vol_X(-K_X-xD)$ and $((-K_{X_i}-xD_i)^{\cdot n})$ are continuous functions 
over $x\in[\tau_{i-1}, \tau_i]$. Thus we can assume that $x\in(\tau_{i-1}, \tau_i)\cap\Q$. 
We have 
\begin{eqnarray*}
\vol_X(-K_X-xD) = \limsup_{k\to\infty}
\frac{h^0\left(X_i, \sO_{X_i}(-kK_{X_i}+\lfloor-kxD_i\rfloor)\right)}{k^n/n!}
= ((-K_{X_i}-xD_i)^{\cdot n})
\end{eqnarray*}
by \cite[Remark 2.4 (i)]{KKL} and the Serre vanishing theorem. 
\end{proof}

We define the notion of restricted volume functions. 

\begin{definition}\label{rvol_dfn}
For $x\in[0, \tau(D))$, we define the \emph{restricted volume} 
$\vol_{X|D}(-K_X-xD)$ such that
\[
\vol_{X|D}(-K_X-xD):=-\frac{1}{n}\frac{d}{dx}\vol_X(-K_X-xD).
\]
Note that the function $\vol_X(-K_X-xD)$ is $\sC^1$ over $x\in[0, \tau(D))$ 
by \cite[Theorem A]{BFJ}. Thus $\vol_{X|D}(-K_X-xD)$ is 
well-defined and continuous over $x\in[0, \tau(D))$. 
\end{definition}

\begin{proposition}\label{rvol_prop}
Assume that $x\in[\tau_{i-1}, \tau_i]$. Then 
\[
\vol_{X|D}(-K_X-xD)=((-K_{X_i}-xD_i)^{\cdot n-1}\cdot D_i). 
\]
In particular, 
$\vol_{X|D}(-K_X-xD)\geq 0$ for any $x\in[0, \tau(D))$. 
\end{proposition}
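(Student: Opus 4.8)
The plan is to prove the formula $\vol_{X|D}(-K_X-xD)=((-K_{X_i}-xD_i)^{\cdot n-1}\cdot D_i)$ on each interval $[\tau_{i-1},\tau_i]$ by differentiating the volume formula from Lemma \ref{vol_lem} \eqref{vol_lem2}, and then to deduce nonnegativity from the geometry of the models $X_i$. First I would restrict attention to the open interval $(\tau_{i-1},\tau_i)$, where Lemma \ref{vol_lem} \eqref{vol_lem2} gives the clean polynomial expression $\vol_X(-K_X-xD)=((-K_{X_i}-xD_i)^{\cdot n})$; here both $K_{X_i}$ and $D_i$ are $\Q$-Cartier by Remark \ref{model_rmk} \eqref{model_rmk1}, so the right-hand side is genuinely a polynomial in $x$ of degree $n$. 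Differentiating the intersection number $((-K_{X_i}-xD_i)^{\cdot n})$ with respect to $x$ by the product rule on multilinear intersection forms yields
\[
\frac{d}{dx}\left((-K_{X_i}-xD_i)^{\cdot n}\right)
=-n\left((-K_{X_i}-xD_i)^{\cdot n-1}\cdot D_i\right),
\]
so by Definition \ref{rvol_dfn} we obtain $\vol_{X|D}(-K_X-xD)=((-K_{X_i}-xD_i)^{\cdot n-1}\cdot D_i)$ on the open interval.

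Next I would extend the equality to the closed endpoints. Both sides are continuous in $x$: the left side is continuous on $[0,\tau(D))$ by the $\sC^1$ statement recorded in Definition \ref{rvol_dfn}, and the right side $((-K_{X_i}-xD_i)^{\cdot n-1}\cdot D_i)$ is a polynomial in $x$, hence continuous on all of $[\tau_{i-1},\tau_i]$. Since the two continuous functions agree on the dense open subset $(\tau_{i-1},\tau_i)$, they agree on the closed interval as well; this also confirms that the derivative of $\vol_X(-K_X-xD)$ matches across the subdivision points, consistent with the global $\sC^1$ regularity.

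Finally, for the nonnegativity assertion, the key point is that on the interval $(\tau_{i-1},\tau_i)$ the map $\phi_i\colon\tilde{X}\dashrightarrow X_i$ is the ample model of $-K_{\tilde{X}}-x\tilde{D}$ by Theorem \ref{model_thm}, so $-K_{X_i}-xD_i$ is an ample $\R$-divisor on $X_i$; and $D_i=(\phi_i)_*\tilde{D}$ is the pushforward of the effective divisor $\tilde{D}$, hence effective. Therefore the intersection number $((-K_{X_i}-xD_i)^{\cdot n-1}\cdot D_i)$ is the intersection of $n-1$ ample classes with an effective divisor, which is nonnegative (one reduces to the fact that an ample class raised to the $(n-1)$-st power has nonnegative intersection with an effective class, e.g.\ by representing $D_i$ as a nonnegative combination of prime divisors and using Kleiman's criterion on each). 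Since every $x\in[0,\tau(D))$ lies in some $[\tau_{i-1},\tau_i]$, this gives $\vol_{X|D}(-K_X-xD)\geq 0$ throughout. I expect the only mild subtlety to be the endpoint matching, where one must be careful that the two \emph{different} polynomial expressions coming from adjacent intervals both take the correct common value at the shared point $\tau_i$; but this is forced by continuity of $\vol_{X|D}$ and does not require any separate argument.
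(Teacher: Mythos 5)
Your proposal is correct and follows essentially the same route as the paper: the paper's proof is a one-line citation of Lemma \ref{vol_lem} \eqref{vol_lem2}, meaning exactly that one differentiates the polynomial expression $((-K_{X_i}-xD_i)^{\cdot n})$ and applies Definition \ref{rvol_dfn}. Your additional details (extension to the endpoints by continuity, and nonnegativity via intersecting nef/ample classes with the effective divisor $D_i$) are precisely the implicit steps the paper leaves to the reader.
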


\begin{proof}
Follows from Lemma \ref{vol_lem} \eqref{vol_lem2}. 
\end{proof}

\begin{definition}\label{rvf_dfn}
We define 
\[
\vol_{X|D}(-K_X-\tau(D)D):=\left((-K_{X_m}-\tau(D)D_m)^{\cdot n-1}\cdot D_m\right)
\]
for convenience. 
By Proposition \ref{rvol_prop}, $\vol_{X|D}(-K_X-xD)$ is continuous over 
$x\in[0, \tau(D)]$. 
\end{definition}

\begin{proposition}\label{vr_prop}
Assume that $D$ is a prime divisor and $x\in[0, \tau(D))\cap\Q$. 
Then the value $\vol_{X|D}(-K_X-xD)$ coincides with the usual restricted volume 
$\vol_{\tilde{X}|\tilde{D}}(-K_{\tilde{X}}-x\tilde{D})$ in \cite{ELMNP, LM}. 
More precisely, 
\[
\vol_{X|D}(-K_X-xD)=\limsup_{\substack{k\to\infty\\-kK_{\tilde{X}}-kx\tilde{D}:
\text{ Cartier}}}
\frac{\dim V_{(k)}}{k^{n-1}/(n-1)!} 
\]
holds, where $V_{(k)}$ is the image of the homomorphism
\[
H^0\left(\tilde{X}, \sO_{\tilde{X}}(-kK_{\tilde{X}}-kx\tilde{D})\right)\to
H^0\left(\tilde{D}, \sO_{\tilde{X}}(-kK_{\tilde{X}}-kx\tilde{D})|_{\tilde{D}}\right).
\]
\end{proposition}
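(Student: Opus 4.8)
The plan is to compare the two quantities chamber by chamber, using the ample model sequence of Theorem \ref{model_thm} together with the birational invariance of the restricted volume of \cite{ELMNP, LM}. First I would reduce to the case $x\in(\tau_{i-1},\tau_i)\cap\Q$. By definition, the right-hand side $\limsup_k\dim V_{(k)}/(k^{n-1}/(n-1)!)$ is the restricted volume $\vol_{\tilde{X}|\tilde{D}}(-K_{\tilde{X}}-x\tilde{D})$ in the sense of \cite{ELMNP, LM}, which is continuous in the big class $-K_{\tilde{X}}-x\tilde{D}$ and hence continuous in $x\in[0,\tau(D))$, while the left-hand side is continuous there by Definition \ref{rvf_dfn}. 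Since each $\tau_{i-1}$ is a chamber endpoint, it therefore suffices to prove equality for interior rational $x$; the remaining rational values (including $x=0$) then follow by continuity.

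Fix such an $x$ and set $A_i:=-K_{X_i}-xD_i$, which is ample on $X_i$ because $\phi_i$ is the ample model of $-K_{\tilde{X}}-x\tilde{D}$ (Theorem \ref{model_thm}, Remark \ref{model_rmk2}). In view of Proposition \ref{rvol_prop}, the whole assertion is equivalent to the identity $\vol_{\tilde{X}|\tilde{D}}(-K_{\tilde{X}}-x\tilde{D})=(A_i^{\cdot n-1}\cdot D_i)$, and I would split into two cases according to whether $\tilde{D}$ is contracted by $\phi_i$. If $\tilde{D}$ is $\phi_i$-exceptional, then $D_i=0$ and the right-hand side vanishes; on the other hand, $\tilde{D}$ being contracted by the ample model forces $\tilde{D}\subseteq\mathbf{B}_+(-K_{\tilde{X}}-x\tilde{D})$, so $V_{(k)}=0$ for $k\gg 0$ and the restricted volume is $0$ as well. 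If $\tilde{D}$ is not contracted, then $D_i$ is a prime divisor and $\phi_i$ restricts to a birational map $\tilde{D}\dashrightarrow D_i$.

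For the non-contracted case I would first evaluate the restricted volume on the model. Ampleness of $A_i$ gives, via the exact sequence twisting $\sO_{X_i}(kA_i)$ by $-D_i$ and Serre vanishing, that the restriction maps $H^0(X_i,\sO_{X_i}(kA_i))\to H^0(D_i,\sO_{X_i}(kA_i)|_{D_i})$ are surjective for $k\gg 0$, with image of dimension asymptotic to $(A_i^{\cdot n-1}\cdot D_i)\,k^{n-1}/(n-1)!$ by asymptotic Riemann--Roch on $D_i$; that is, $\vol_{X_i|D_i}(A_i)=(A_i^{\cdot n-1}\cdot D_i)$. It then remains to match the restricted volumes on $\tilde{X}$ and on $X_i$, and here I would invoke Remark \ref{model_rmk2}: after multiplying $k$ by a suitable $k_0$, the section rings $\bigoplus_k H^0(\tilde{X},\sO_{\tilde{X}}(-kK_{\tilde{X}}-kx\tilde{D}))$ and $\bigoplus_k H^0(X_i,\sO_{X_i}(kA_i))$ are identified, compatibly with the open immersion of a big open subset of $X_i$ into $\tilde{X}$ provided by the birational contraction $\phi_i$. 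Under this identification $\tilde{D}$ and $D_i$ agree over a big open subset of $D_i$, from which I would deduce that $V_{(kk_0)}$ and the restriction images on $D_i$ have the same asymptotic dimension, yielding $\vol_{\tilde{X}|\tilde{D}}(-K_{\tilde{X}}-x\tilde{D})=\vol_{X_i|D_i}(A_i)$.

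The main obstacle is precisely this last comparison of restriction images across $\phi_i$: although $\phi_i$ is an isomorphism over a codimension-$\geq 2$ open subset of $X_i$, the strict transform $\tilde{D}$ may differ from $D_i$ along a divisor lying over the $\phi_i$-exceptional locus, and one must verify that this does not alter the asymptotic dimension of the restricted linear series. I expect to resolve this by the birational invariance of restricted volumes of \cite{ELMNP}: passing to a common resolution $p\colon Y\to\tilde{X}$, $q\colon Y\to X_i$ and using the semiample model of Definition \ref{KKL_dfn} to write $p^*(-K_{\tilde{X}}-x\tilde{D})=q^*A_i+F$ with $F$ effective and $q$-exceptional, it suffices to show that the effective $q$-exceptional term $F$ contributes nothing to the restricted volume along the strict transform of $\tilde{D}$, after which the projection formula on $X_i$ gives the stated intersection number.
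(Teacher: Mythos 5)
Your route is genuinely different from the paper's, which is essentially two lines: since $\sigma$ is small, its exceptional locus has codimension at least two and, by \cite[Proposition 2.3]{BBP}, equals $\B_+(-K_{\tilde{X}})$; hence $\tilde{D}\not\subset\B_+(-K_{\tilde{X}})$, and the Okounkov-body argument in the proof of \cite[Corollary 4.25]{LM} (slices of the Okounkov body of $-K_{\tilde{X}}$, taken with respect to a flag whose divisorial component is $\tilde{D}$, compute the restricted volumes and, upon integration, the volume function) identifies $-\frac{1}{n}\frac{d}{dx}\vol_X(-K_X-xD)$ with the restricted volume along all of $[0,\tau(D))$. Your chamber-by-chamber argument through the ample model sequence is more hands-on, and its core is sound: for $x$ in a chamber interior, Serre vanishing plus asymptotic Riemann--Roch gives $\vol_{X_i|D_i}(-K_{X_i}-xD_i)=((-K_{X_i}-xD_i)^{\cdot n-1}\cdot D_i)$, and the comparison across $\phi_i$ that you flag as the main obstacle does go through: precisely because $\tilde{D}$ is not $\phi_i$-exceptional, its strict transform on the common resolution is not a component of the $q$-exceptional divisor $F$, so multiplication by the canonical section of $kF$ is injective on restrictions to it and $F$ contributes nothing.

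There are, however, two genuine gaps. First, your reduction to chamber interiors rests on the assertion that the restricted volume of \cite{ELMNP, LM} is ``continuous in the big class''; that is false in general. The continuity theorem for restricted volumes in \cite{ELMNP} applies only on the locus of classes $\xi$ with $\tilde{D}\not\subset\B_+(\xi)$, and the function can jump when $\tilde{D}$ falls into the augmented base locus. So you must either verify $\tilde{D}\not\subset\B_+(-K_{\tilde{X}}-x\tilde{D})$ for the relevant $x$ --- this non-containment is exactly the input the paper extracts from \cite{BBP}, and the hypothesis under which the machinery of \cite{LM} runs --- or treat $x=0$ and the chamber endpoints $\tau_i$ directly via the semiample model there (this is doable, but is not in your write-up). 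Second, in the contracted case, the implication ``$\tilde{D}\subset\B_+(-K_{\tilde{X}}-x\tilde{D})$, so $V_{(k)}=0$ for $k\gg 0$'' is wrong: on $\Bl_p\pr^2$ with the big and nef class $H$ pulled back from $\pr^2$, $\B_+(H)$ is the exceptional curve, yet the restriction image is the nonzero space of constants. What you actually need is containment in the stable base locus (which does hold for divisors contracted by an ample model, since every section comes from $X_i$ and acquires $F$ in its divisor), or the theorem of \cite{ELMNP} that $\B_+$-containment forces the restricted volume to vanish. (In fact this case is vacuous: the $(-\tilde{D})$-MMP with scaling of Section \ref{MMP_section} only contracts rays on which the strict transform of $\tilde{D}$ is positive, so $D_i\neq 0$ for every $i$; but your proof does not establish this.)
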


\begin{proof}
We know that $\Exc(\sigma)=\B_+(-K_{\tilde{X}})$ by \cite[Proposition 2.3]{BBP}, 
where $\Exc(\sigma)$ is the exceptional locus of $\sigma$ and $\B_+(-K_{\tilde{X}})$ 
is the augmented base locus of $-K_{\tilde{X}}$ (see \cite[Section 2.4]{LM}). 
In particular, $\tilde{D}\not\subset\B_+(-K_{\tilde{X}})$. Thus, by the proof of 
\cite[Corollary 4.25]{LM}, the assertion follows. 
\end{proof}

\section{Divisorial stability}\label{div_section}

We define the notion of divisorial stability for $\Q$-Fano varieties. 
In this section, we fix a $\Q$-Fano variety $X$ of dimension $n$ and a nonzero 
effective Weil divisor $D$ on $X$. 

By Lemma \ref{MDS_lem}, the graded $\C$-algebra
\[
\bigoplus_{k, j\geq 0}H^0\left(X, \sO_X(-kK_X-jD)\right)
\]
is finitely generated. 
We remark that the above algebra is equal to 
\[
\bigoplus_{\substack{k\geq 0\\ 0\leq j\leq k\tau(D)}}H^0\left(X, \sO_X(-kK_X-jD)\right),
\]
since $H^0(X, \sO_X(-kK_X-jD))=0$ for $j>k\tau(D)$. 

\begin{definition}\label{gen_dfn}
We say that a positive integer $r\in\Z_{>0}$ \emph{satisfies the generating property 
with respects to} $(X, -K_X; -D)$ if $-rK_X$ is Cartier, $r\tau(D)\in\Z_{>0}$, and
the $\C$-algebra
\[
\bigoplus_{\substack{k\geq 0\\ 0\leq j\leq kr\tau(D)}}
H^0\left(X, \sO_X(-krK_X-jD)\right)
\]
is generated by 
\[
\bigoplus_{0\leq j\leq r\tau(D)}H^0\left(X, \sO_X(-rK_X-jD)\right)
\]
as a $\C$-algebra. 
\end{definition}

\begin{remark}\label{gen_rmk}
\begin{enumerate}
\renewcommand{\theenumi}{\arabic{enumi}}
\renewcommand{\labelenumi}{(\theenumi)}
\item\label{gen_rmk1}
If $r\in\Z_{>0}$ is sufficiently divisible, then $r$ satisfies the generating property 
with respects to $(X, -K_X; -D)$. 
\item\label{gen_rmk2}
We assume that a positive integer $r\in\Z_{>0}$ satisfies the generating property with respects to 
$(X, -K_X; -D)$. Then the $\C$-algebra
\[
\bigoplus_{k\geq 0}H^0\left(X, \sO_X(-krK_X)\right)
\]
is generated by $H^0(X, \sO_X(-rK_X))$. In particular, the divisor $-rK_X$ is very ample. 
\end{enumerate}
\end{remark}

Throughout the end of the section, we fix $r\in\Z_{>0}$ which satisfies the 
generating property with respects to $(X, -K_X; -D)$. 
From now on, we construct a semi test configuration of $(X, -rK_X)$. 

For any $j\geq 0$, we set the coherent ideal sheaf $I_j\subset\sO_X$ defined by 
the image of the composition of the homomorphisms 
\begin{eqnarray*}
H^0\left(X, \sO_X(-rK_X-jD)\right)\otimes_\C\sO_X(rK_X)
\hookrightarrow
H^0\left(X, \sO_X(-rK_X)\right)\otimes_\C\sO_X(rK_X)\to\sO_X.
\end{eqnarray*}
In other words, $I_j$ is the base ideal of the sub linear system of the complete 
linear system $|-rK_X|$
associates to the embedding 
\[
H^0\left(X, \sO_X(-rK_X-jD)\right)\subset H^0\left(X, \sO_X(-rK_X)\right).
\]
Obviously, we have 
\[
\sO_X=I_0\supset I_1\supset\cdots\supset I_{r\tau(D)}\supset I_{r\tau(D)+1}=0. 
\]

For any $k\in\Z_{>0}$ and $j\in\Z_{\geq 0}$, we define the coherent ideal sheaf 
$J_{(k,j)}\subset\sO_X$ such that 
\[
J_{(k, j)}:=\sum_{\substack{j_1+\cdots+j_k=j\\ j_1,\dots,j_k\geq 0}}I_{j_1}\cdots I_{j_k}.
\]

\begin{lemma}\label{gen_lem}
The above ideal sheaf $J_{(k, j)}\subset\sO_X$ is equal to the base ideal of the 
sub linear system of the complete linear system $|-krK_X|$ associates to 
the embedding
\[
H^0\left(X, \sO_X(-krK_X-jD)\right)\subset H^0\left(X, \sO_X(-krK_X)\right).
\]
In other words, $J_{(k, j)}$ is equal to the image of the composition of the 
homomorphisms
\begin{eqnarray*}
H^0\left(X, \sO_X(-krK_X-jD)\right)\otimes_\C\sO_X(krK_X)
\hookrightarrow
H^0\left(X, \sO_X(-krK_X)\right)\otimes_\C\sO_X(krK_X)\to\sO_X.
\end{eqnarray*}
In particular, we have 
\[
H^0\left(X, \sO_X(-krK_X-jD)\right)=H^0\left(X, \sO_X(-krK_X)\cdot J_{(k, j)}\right)
\]
as subspaces of $H^0(X, \sO_X(-krK_X))$.
\end{lemma}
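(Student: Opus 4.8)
The plan is to reduce the statement to two formal multiplicative properties of base ideals and then feed in the generating property of $r$. Write $L:=\sO_X(-rK_X)$, so that $\sO_X(-krK_X)=L^{\otimes k}$ because $-rK_X$ is Cartier, and for $j\geq 0$ let $W_{(k,j)}$ denote the image of $H^0(X,\sO_X(-krK_X-jD))$ inside $H^0(X,L^{\otimes k})$ under the inclusion induced by $jD\geq 0$. By definition $I_j=\db(W_{(1,j)})$, where for a subspace $V\subseteq H^0(X,\sM)$ of a line bundle $\sM$ I write $\db(V)$ for the base ideal, i.e.\ the image of the evaluation map $V\otimes_\C\sM^{-1}\to\sO_X$; the assertion to prove is exactly that $J_{(k,j)}=\db(W_{(k,j)})$, together with the resulting description of global sections. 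First I would record two properties of $\db(-)$: (i) additivity, $\db(V_1+V_2)=\db(V_1)+\db(V_2)$ for $V_1,V_2\subseteq H^0(X,\sM)$, which is immediate since the image of a sum of subspaces is the sum of images; and (ii) multiplicativity, $\db(V\cdot V')=\db(V)\cdot\db(V')$ for $V\subseteq H^0(X,\sM)$ and $V'\subseteq H^0(X,\sM')$, where $V\cdot V'\subseteq H^0(X,\sM\otimes\sM')$ is the span of products. Property (ii) is a local computation: trivializing $\sM,\sM'$ near a point by generators $e,e'$, the ideal $\db(V\cdot V')$ is generated by the functions $ss'/(ee')$ with $s\in V$, $s'\in V'$, whereas $\db(V)\db(V')$ is generated by the products $(s/e)(s'/e')$, and these two generating sets coincide.

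Next I would invoke the generating property of $r$ (Definition \ref{gen_dfn}). Since the bigraded algebra is generated in degrees $(1,j')$ with $0\leq j'\leq r\tau(D)$, every section in $H^0(X,\sO_X(-krK_X-jD))$ is a linear combination of $k$-fold products of such generators, so that, as subspaces of $H^0(X,L^{\otimes k})$,
\[
W_{(k,j)}=\sum_{\substack{j_1+\cdots+j_k=j\\ j_1,\dots,j_k\geq 0}}W_{(1,j_1)}\cdots W_{(1,j_k)},
\]
where terms with some $j_i>r\tau(D)$ contribute $W_{(1,j_i)}=0$. Applying (i) and then (ii) term by term, and recalling $I_{j_i}=\db(W_{(1,j_i)})$, yields
\[
\db(W_{(k,j)})=\sum_{j_1+\cdots+j_k=j}\db(W_{(1,j_1)})\cdots\db(W_{(1,j_k)})=\sum_{j_1+\cdots+j_k=j}I_{j_1}\cdots I_{j_k}=J_{(k,j)},
\]
which is the first assertion (the equality with the image of the composed homomorphism being just the unwinding of the definition of $\db$).

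For the displayed equality of global sections it remains to identify $H^0(X,\sO_X(-krK_X)\cdot J_{(k,j)})$ with $W_{(k,j)}$. The inclusion $W_{(k,j)}\subseteq H^0(X,L^{\otimes k}\cdot J_{(k,j)})$ is definitional, since $L^{\otimes k}\cdot J_{(k,j)}$ is the subsheaf of $L^{\otimes k}$ generated by $W_{(k,j)}$. For the reverse inclusion I would note that each section of $W_{(k,j)}$ is a section of $\sO_X(-krK_X-jD)=L^{\otimes k}\otimes\sO_X(-jD)\subseteq L^{\otimes k}$; dividing by a local generator of $L^{\otimes k}$ shows $J_{(k,j)}\subseteq\sO_X(-jD)$ as ideal sheaves, whence $L^{\otimes k}\cdot J_{(k,j)}\subseteq\sO_X(-krK_X-jD)$ and therefore $H^0(X,L^{\otimes k}\cdot J_{(k,j)})\subseteq W_{(k,j)}$. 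I expect the only genuine subtlety to lie in the multiplicativity (ii) together with the clean application of the generating property: these are precisely the places where the special choice of $r$ is essential, since for an arbitrary Cartier index the space $W_{(k,j)}$ would in general strictly contain the span of products of lower-degree sections and the identity $J_{(k,j)}=\db(W_{(k,j)})$ would fail.
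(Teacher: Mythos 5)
Your proof is correct and follows essentially the same route as the paper: the generating property of $r$ gives the decomposition of $H^0(X,\sO_X(-krK_X-jD))$ into sums of $k$-fold products of the degree-one pieces, and then additivity and multiplicativity of base ideals (which the paper uses implicitly, via images of the evaluation maps $V_{(1,j_1)}\otimes\cdots\otimes V_{(1,j_k)}\otimes\sO_X(krK_X)\to\sO_X$) yield $J_{(k,j)}$. Your explicit verification of the ``in particular'' equality of global sections, via $J_{(k,j)}\subseteq\sO_X(-jD)$, is a detail the paper leaves implicit, but it is the standard and intended argument.
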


\begin{proof}
We write 
\[
V_{(k, j)}:=H^0\left(X, \sO_X(-krK_X-jD)\right)
\]
for simplicity. 
By the definition of $r$, the homomorphism
\begin{eqnarray*}
\bigoplus_{\substack{j_1+\cdots+j_k=j\\ j_1,\dots,j_k\geq 0}}
V_{(1, j_1)}\otimes_\C\cdots\otimes_\C V_{(1, j_k)}
\to V_{(k, j)}
\end{eqnarray*}
is surjective. 
For any $j_i$, the image of the homomorphism 
\[
V_{(1, j_i)}\otimes_\C\sO_X(rK_X)\to\sO_X
\]
is equal to $I_{j_i}$. Thus the image of the homomorphism 
\begin{eqnarray*}
\bigoplus_{\substack{j_1+\cdots+j_k=j\\ j_1,\dots,j_k\geq 0}}
V_{(1, j_1)}\otimes_\C\cdots\otimes_\C V_{(1, j_k)}\otimes_\C\sO_X(krK_X)
\to\sO_X
\end{eqnarray*}
is equal to 
\[
\sum_{\substack{j_1+\cdots+j_k=j\\ j_1,\dots,j_k\geq 0}}I_{j_1}\cdots I_{j_k}.
\]
This is nothing but $J_{(k, j)}$. 
\end{proof}

We consider the following flag ideal 
\[
\sI:=I_{r\tau(D)}+I_{r\tau(D)-1}t^1+\cdots+ I_1t^{r\tau(D)-1}+(t^{r\tau(D)})
\subset\sO_{X\times\A^1_t}.
\]
By construction, for any $k\in\Z_{>0}$, we have 
\[
\sI^k=J_{(k, kr\tau(D))}+J_{(k, kr\tau(D)-1)}t^1+\cdots+J_{(k, 1)}t^{kr\tau(D)-1}
+(t^{kr\tau(D)}).
\]
Let $\Pi\colon\sB\to X\times\A^1$ be the blowup along $\sI$, 
let $E_\sB\subset\sB$ be the Cartier divisor on $\sB$ defined by 
$\sO_\sB(-E_\sB)=\sI\cdot\sO_\sB$. Moreover, we set 
$\sL:=\Pi^*p_1^*\sO_X(-rK_X)\otimes\sO_\sB(-E_\sB)$ and 
$\alpha:=p_2\circ\Pi\colon\sB\to\A^1$, where $p_1$ or $p_2$ is the first or the second 
projection morphism, respectively. 

\begin{lemma}\label{stc_lem}
$\alpha\colon(\sB, \sL)\to\A^1$ is a semi test configuration of $(X, -rK_X)$. 
\end{lemma}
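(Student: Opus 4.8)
The plan is to verify the two requirements imposed in Definition \ref{K_dfn} \eqref{K_dfn2}, since the blowup $\Pi$, the divisor $E_\sB$, the line bundle $\sL$ and the morphism $\alpha$ are already fixed by our construction. First, $\sI$ is genuinely a flag ideal: the chain $\sO_X=I_0\supset I_1\supset\cdots\supset I_{r\tau(D)}\supset I_{r\tau(D)+1}=0$ recorded above provides the required decreasing sequence of coherent ideal sheaves, with $M=r\tau(D)$.

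For the requirement that $\sI$ is not of the form $(t^M)$, I would show $I_1\neq 0$, equivalently $H^0(X,\sO_X(-rK_X-D))\neq 0$ (note $I_1$ is the largest of $I_1,\dots,I_M$, so $\sI=(t^M)$ iff $I_1=0$). Here the generating property of $r$ is the input: since $\tau(D)>0$ and $-K_{\tilde X}-x\tilde D$ is big for $0<x<\tau(D)$, the bigraded algebra $\bigoplus_{k,j}H^0(X,\sO_X(-krK_X-jD))$ contains nonzero elements with $j\geq 1$; as it is generated by its $k=1$ slice, some $H^0(X,\sO_X(-rK_X-jD))$ with $1\leq j\leq r\tau(D)$ is nonzero, and multiplying a nonzero section by the $(j-1)$-st power of the canonical section of $\sO_X(D)$ produces a nonzero element of $H^0(X,\sO_X(-rK_X-D))$.

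The main point, and the only place I expect real work, is the semiampleness of $\sL$ over $\A^1$. I would in fact prove the stronger statement that $\sL$ is globally generated relative to $\A^1$, by exhibiting explicit relative sections generating $\sL$ at every point of $\sB$. For $0\leq j\leq r\tau(D)$ and $s\in H^0(X,\sO_X(-rK_X-jD))$, the identity $H^0(X,\sO_X(-rK_X-jD))=H^0(X,\sO_X(-rK_X)\cdot I_j)$ (Lemma \ref{gen_lem} with $k=1$) lets me regard $s$ as a section of $\sO_X(-rK_X)\cdot I_j$, so that $s\,t^{r\tau(D)-j}$ is a section of $p_1^*\sO_X(-rK_X)\otimes\sI$ on $X\times\A^1$; pulling it back along the surjection $\Pi^*(p_1^*\sO_X(-rK_X)\otimes\sI)\twoheadrightarrow\sL$ (valid because $\sI\cdot\sO_\sB=\sO_\sB(-E_\sB)$) yields a section $\tilde s_j\in H^0(\sB,\sL)$, defined over all of $\A^1$.

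To conclude I would argue stalkwise. By the very definition of a base ideal, $\sO_X(-rK_X)\cdot I_j$ is globally generated by $H^0(X,\sO_X(-rK_X-jD))$, so in a local frame $e$ of $\sO_X(-rK_X)$ near a point $x$ the ratios $s/e$ generate the stalk $(I_j)_x$. Since locally $\sI_{(x,0)}=\sum_{j=0}^{r\tau(D)}(I_j)_x\,t^{r\tau(D)-j}$, with the $j=0$ term $I_0=\sO_X$ accounting for the $(t^{r\tau(D)})$-part and generated by some $s$ with $s(x)\neq 0$ as $-rK_X$ is very ample by Remark \ref{gen_rmk} \eqref{gen_rmk2}, the elements $(s/e)\,t^{r\tau(D)-j}$ generate $\sI_{(x,0)}$; their pullbacks then generate $\sO_\sB(-E_\sB)$, hence the $\tilde s_j$ generate $\sL$, at every point over $t=0$. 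Over $t\neq 0$ the map $\Pi$ is an isomorphism and $\sL\cong p_1^*\sO_X(-rK_X)$ is generated by the $j=0$ sections. As all $\tilde s_j$ lie in $H^0(\sB,\sL)$, the map $\alpha^*\alpha_*\sL\to\sL$ is surjective, so $\sL$ is globally generated, in particular semiample, over $\A^1$. The main obstacle is precisely this verification that the weight-one bundle $\sO_X(-rK_X)$ already suffices to generate $\sL$ relatively; it is exactly the definition of the $I_j$ as base ideals together with Lemma \ref{gen_lem} that makes the $\tilde s_j$ match the generators of $\sI$, so that no higher power of $\sO_X(-rK_X)$ is needed.
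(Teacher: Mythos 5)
Your proof is correct, but it takes a noticeably different route from the paper's. The paper proves semiampleness by working with all powers $\sI^k$: it invokes Lemma \ref{gen_lem} (hence the generating property of $r$) to identify the graded pieces of $\sI^k$ with the base ideals $J_{(k,j)}$, concludes that $p_1^*\sO_X(-krK_X)\cdot\sI^k$ is relatively globally generated, and then uses \cite[Lemma 5.4.24]{L} to identify $\alpha_*\sL^{\otimes k}$ with $(p_2)_*\left(p_1^*\sO_X(-krK_X)\cdot\sI^k\right)$ for $k\gg 0$, so that the surjection $\Pi^*\left(p_1^*\sO_X(-krK_X)\cdot\sI^k\right)\twoheadrightarrow\sL^{\otimes k}$ gives surjectivity of $\alpha^*\alpha_*\sL^{\otimes k}\to\sL^{\otimes k}$. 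You instead work only at level $k=1$: since the graded pieces of $\sI$ are by definition the base ideals $I_j$, each $\sO_X(-rK_X)\cdot I_j$ is globally generated by $H^0(X,\sO_X(-rK_X-jD))$, and your stalkwise argument shows that the resulting sections $\tilde s_j$ already generate $\sL$ itself over $\A^1$. This is more elementary --- you need Lemma \ref{gen_lem} only in its tautological $k=1$ case, and you avoid the pushforward identification entirely, since exhibiting generating sections lying in $\alpha_*\sL$ is enough to conclude surjectivity of $\alpha^*\alpha_*\sL\to\sL$ --- and it yields the slightly stronger statement that $\sL$, not merely a high power of it, is relatively globally generated. What the paper's formulation buys is that the identification of $\alpha_*\sL^{\otimes k}$ for $k\gg 0$ is exactly the input needed right afterwards for the weight computation via \cite[Lemma 3.3]{odk} in Proposition \ref{divst_prop}, so its extra machinery is reused. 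A minor point in your favor: you also verify explicitly that $\sI$ is not of the form $(t^M)$, by using the generating property to force $I_1\neq 0$; this condition from Definition \ref{K_dfn} \eqref{K_dfn2} is left implicit in the paper's proof.
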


\begin{proof}
It is enough to show that $\sL$ is semiample over $\A^1$. 
For any $k\in\Z_{>0}$ and $j\in\Z_{\geq 0}$, by Lemma \ref{gen_lem}, the homomorphism
\[
H^0\left(X, \sO_X(-krK_X)\cdot J_{(k, j)}\right)\otimes_\C\sO_X\to
\sO_X(-krK_X)\cdot J_{(k, j)}
\]
is surjective. Thus, for any $k\in\Z_{>0}$, the homomorphism 
\[
H^0\left(X\times\A^1, p_1^*\sO_X(-krK_X)\cdot\sI^k\right)\otimes_{\C[t]}
\sO_{X\times\A^1}\to p_1^*\sO_X(-krK_X)\cdot \sI^k
\]
is also surjective. 
Therefore, by \cite[Lemma 5.4.24]{L}, we have 
\begin{eqnarray*}
\alpha^*\alpha_*\sL^{\otimes k}&\simeq& 
\alpha^*(p_2)_*(p_1^*\sO_X(-krK_X)\cdot\sI^k)\\
&=&\Pi^*\left(H^0\left(X\times\A^1, p_1^*\sO_X(-krK_X)\cdot\sI^k\right)
\otimes_{\C[t]}\sO_{X\times\A^1}\right)\\
&\twoheadrightarrow&\Pi^*\left(p_1^*\sO_X(-krK_X)\cdot\sI^k\right)\\
&\twoheadrightarrow&
\Pi^*p_1^*\sO_X(-krK_X)\otimes\sO_\sB(-kE_\sB)=\sL^{\otimes k} 
\end{eqnarray*}
for $k\gg 0$. 
This means that $\sL$ is semiample over $\A^1$. 
\end{proof}

\begin{definition}\label{divst_dfn}
\begin{enumerate}
\renewcommand{\theenumi}{\arabic{enumi}}
\renewcommand{\labelenumi}{(\theenumi)}
\item\label{divst_dfn1}
The above flag ideal $\sI$ is called the \emph{basic flag ideal with respects to} 
$(X, -rK_X; -D)$, and the above semi test configuration 
$\alpha\colon(\sB, \sL)\to\A^1$ is called the \emph{basic semi test configuration 
of} $(X, -rK_X)$ \emph{via} $D$. 
\item\label{divst_dfn2}
The pair $(X, -K_X)$ is said to be \emph{divisorially stable} (resp.\ \emph{divisorially 
semistable}) \emph{along} $D$ if for any $r\in\Z_{>0}$ which satisfies the generating 
property with respects to $(X, -K_X; -D)$, the basic semi test configuration 
$(\sB, \sL)/\A^1$ of $(X, -rK_X)$ via $D$ satisfies that $\DF(\sB, \sL)>0$ 
(resp.\ $\geq 0$). (We will see in Theorem \ref{eta_thm} that the definition does not 
depend on the choice of $r$.)
\item\label{divst_dfn3}
The pair $(X, -K_X)$ is said to be \emph{divisorially stable} (resp.\ \emph{divisorially 
semistable}) if the pair is divisorially stable (resp.\ divisorially semistable) along any 
nonzero effective Weil divisor. 
\end{enumerate}
\end{definition}

Let $\alpha\colon(\sB, \sL)\to\A^1$ be the basic 
semi test configuration of $(X, -rK_X)$ 
via $D$ and let $w(k)$ be the total weight of the action of $\G_m$ on 
$(\alpha_*\sL^{\otimes k})|_{\{0\}}$. By \cite[Lemma 3.3]{odk}, $w(k)$ is equal to
\begin{eqnarray*}
&& -\dim\left(\frac{H^0\left(X\times\A^1, p_1^*\sO_X(-krK_X)\right)}
{H^0\left(X\times\A^1, p_1^*\sO_X(-krK_X)\cdot\sI^k\right)}\right)\\
& = & -\dim\bigoplus_{j=0}^{kr\tau(D)-1}t^j\cdot\left(
\frac{H^0\left(X, \sO_X(-krK_X)\right)}{H^0\left(X, \sO_X(-krK_X)
\cdot J_{(k, kr\tau(D)-j)}\right)}\right)\\
 & = & -kr\tau(D)h^0\left(X, \sO_X(-krK_X)\right)+\sum_{j=1}^{kr\tau(D)}
h^0\left(X, \sO_X(-krK_X)\cdot J_{(k, j)}\right)\\
 & = & -kr\tau(D)h^0\left(X, \sO_X(-krK_X)\right)+\sum_{j=1}^{kr\tau(D)}
h^0\left(X, \sO_X(-krK_X-jD)\right).
\end{eqnarray*}
Thus we have the following: 

\begin{proposition}\label{divst_prop}
Let $(\sB, \sL)/\A^1$ be the basic semi test configuration of $(X, -rK_X)$ via $D$. 
We set 
\begin{eqnarray*}
f(k):=\sum_{j=1}^\infty h^0\left(X, \sO_X(-krK_X-jD)\right)
=\sum_{j=1}^{kr\tau(D)}h^0\left(X, \sO_X(-krK_X-jD)\right).
\end{eqnarray*}
Then $f(k)$ is a polynomial function of degree at most $n+1$ for $k\gg 0$. 
Let $f_{n+1}$, $f_n$ be the $(n+1)$-th, $n$-th coefficient of $f(k)$, respectively. 
Then we have 
\[
\DF(\sB, \sL)=\frac{r^{2n}((-K_X)^{\cdot n})}{2\cdot(n!)^2}\eta(D), 
\]
where 
\[
\eta(D):=\frac{n!}{r^{n+1}}(nf_{n+1}-2rf_n).
\]
\end{proposition}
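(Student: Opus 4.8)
The plan is to establish the two assertions of Proposition \ref{divst_prop} separately: first that $f(k)$ is eventually polynomial of degree at most $n+1$, and second that the Donaldson-Futaki invariant is the claimed multiple of $\eta(D)$. For the polynomiality, I would exploit the finite generation of the bigraded section ring $\bigoplus_{k,j}H^0(X,\sO_X(-kK_X-jD))$ (guaranteed by Lemma \ref{MDS_lem}, since $\tilde X$ is a Mori dream space). By a standard Hilbert-function argument for finitely generated bigraded algebras, the function $(k,j)\mapsto h^0(X,\sO_X(-krK_X-jD))$ agrees with a quasi-polynomial on suitable cones for $k\gg 0$; summing over $j$ from $1$ to $kr\tau(D)$ then produces a function $f(k)$ that is eventually polynomial in $k$ of degree one higher than the top degree $n$ of the $h^0$ count, giving degree at most $n+1$. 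This is essentially the content behind \cite[Lemma 3.3]{odk} combined with the asymptotic Riemann-Roch theorem promised in Section \ref{RR_section}, so I would invoke those rather than reprove them.

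For the formula for $\DF(\sB,\sL)$, the key is the computation of the total weight $w(k)$ already carried out in the excerpt immediately before the proposition, namely
\[
w(k)=-kr\tau(D)\,h^0(X,\sO_X(-krK_X))+f(k).
\]
First I would expand $h^0(X,\sO_X(-krK_X))$ by asymptotic Riemann-Roch: since $-rK_X$ is very ample (Remark \ref{gen_rmk} \eqref{gen_rmk2}), this equals a polynomial in $k$ whose leading coefficient is $r^n((-K_X)^{\cdot n})/n!\cdot k^n$ and whose next coefficient involves $-\tfrac12 K_X\cdot(-rK_X)^{n-1}$. Multiplying by $-kr\tau(D)$ shifts these into the $(n+1)$-th and $n$-th coefficients of $w(k)$. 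Combining with the coefficients $f_{n+1},f_n$ of $f(k)$, I would read off
\[
w_{n+1}=f_{n+1}-r\tau(D)\cdot\frac{r^n((-K_X)^{\cdot n})}{n!},
\]
and similarly for $w_n$, then substitute into the second displayed form of $\DF$ in Definition \ref{K_dfn} \eqref{K_dfn3}, namely $\DF(\sB,\sL)=\tfrac{((-rK_X)^{\cdot n})}{n!}\bigl(\tfrac{n}{2r}w_{n+1}-w_n\bigr)$.

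The main obstacle I anticipate is bookkeeping: carefully tracking the $\tau(D)$-dependent terms coming from the $-kr\tau(D)\,h^0$ summand and verifying that they cancel cleanly against the contributions from $f(k)$, so that what survives assembles into exactly $\tfrac{n!}{r^{n+1}}(nf_{n+1}-2rf_n)$ up to the asserted prefactor $\tfrac{r^{2n}((-K_X)^{\cdot n})}{2\cdot(n!)^2}$. I would organize this by writing $\tfrac{n}{2r}w_{n+1}-w_n$ as a linear combination of $f_{n+1}$, $f_n$, and the Riemann-Roch coefficients of $h^0(-krK_X)$, and checking that the pure Riemann-Roch terms (those not involving $f$) cancel; this cancellation is the arithmetic heart of the proof and encodes why the intersection-theoretic ``error" terms drop out, leaving only the volume-type quantity $\eta(D)$. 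Once that cancellation is verified, substituting $((-rK_X)^{\cdot n})=r^n((-K_X)^{\cdot n})$ and collecting powers of $r$ yields the stated identity.
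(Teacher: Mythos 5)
Your proposal is correct and follows essentially the same route as the paper: the paper derives $w(k)=-kr\tau(D)\,h^0(X,\sO_X(-krK_X))+f(k)$ from \cite[Lemma 3.3]{odk} immediately before the proposition and then simply states the result, leaving to the reader exactly the computation you describe --- expanding $h^0(X,\sO_X(-krK_X))$ by asymptotic Riemann--Roch, reading off $w_{n+1}$ and $w_n$, and checking that the $\tau(D)$-dependent terms cancel when substituted into $\DF(\sB,\sL)=\tfrac{((-rK_X)^{\cdot n})}{n!}\bigl(\tfrac{n}{2r}w_{n+1}-w_n\bigr)$. Your verification of that cancellation is the whole content of the proof, and your polynomiality argument (which can also be obtained more directly from the known polynomiality of $w(k)$ plus that of $h^0(X,\sO_X(-krK_X))$) is consistent with what the paper implicitly uses.
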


\begin{remark}\label{divst_rmk}
Obviously, if $(X, -K_X)$ is K-stable (resp.\ K-semistable), then $(X, -K_X)$ is 
divisorially stable (resp.\ divisorially semistable). 
In particular, if $X$ admits K\"ahler-Einstein metrics, then $(X, -K_X)$ 
is divisorially semistable by \cite{don, Berman}. 
\end{remark}

\begin{remark}\label{filt_rmk}
The relationship between test configurations and filtered graded linear series 
is discussed by many authors. See \cite{nystrom, sz} and references therein.
\end{remark}

\section{On the asymptotic Riemann-Roch theorem}\label{RR_section}

In this section, we prove the following proposition. 

\begin{proposition}\label{RR_prop}
Let $V$ be a normal projective variety of dimension $n$, let $H_V$, $D_V$ be 
$\Q$-Cartier Weil divisors on $V$ such that $D_V$ is effective, and let $a$, $b$ be 
rational numbers with $a<b$ such that $H_V-xD_V$ is ample for any $x\in(a, b)$. 
Then for any sufficiently divisible positive integer $k$, the function 
\[
v(k):=\sum_{j=ak+1}^{bk}h^0\left(V, \sO_V(kH_V-jD_V)\right)
\]
satisfies that 
\begin{eqnarray*}
v(k) & = & k^{n+1}\frac{1}{n!}\int_a^b((H_V-xD_V)^{\cdot n})dx\\
 & + & k^n\frac{1}{2\cdot (n-1)!}\int_a^b((H_V-xD_V)^{\cdot n-1}\cdot(-K_V-D_V))dx
+ O(k^{n-1}).
\end{eqnarray*}
\end{proposition}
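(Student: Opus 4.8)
We need to prove an asymptotic expansion for $v(k) = \sum_{j=ak+1}^{bk} h^0(V, \mathcal{O}_V(kH_V - jD_V))$ to two leading orders in $k$. The leading term $k^{n+1}$ involves $\int_a^b ((H_V - xD_V)^{\cdot n})\,dx$, and the subleading $k^n$ term involves $\int_a^b((H_V - xD_V)^{\cdot n-1}\cdot(-K_V - D_V))\,dx$.

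**The core tool: asymptotic Riemann-Roch.** The key idea is that for $j$ in the range $ak < j < bk$, we have $H_V - (j/k)D_V$ ample, so $kH_V - jD_V$ is ample (for divisible $k$). By Serre vanishing and asymptotic Riemann-Roch, $h^0(V, \mathcal{O}_V(kH_V - jD_V))$ equals $\chi(V, \mathcal{O}_V(kH_V - jD_V))$ and is a "polynomial" in the pair $(k, j)$ up to error. The issue is that $V$ is only normal (not smooth), and $H_V, D_V$ are $\mathbb{Q}$-Cartier Weil divisors, so one must be careful about which Riemann-Roch holds.

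**The plan.** First I would reduce to the case where things are Cartier by passing to a multiple or using the singular asymptotic Riemann-Roch. Then I would treat the expression as a double sum/integral. The approach I'd take:

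\begin{enumerate}
\renewcommand{\theenumi}{\arabic{enumi}}
\renewcommand{\labelenumi}{(\theenumi)}
\item For each fixed $j$ in the range, apply asymptotic Riemann–Roch to the ample divisor $kH_V - jD_V = k(H_V - (j/k)D_V)$. Writing $x = j/k$, the leading behavior is $h^0 \approx \frac{k^n}{n!}((H_V - xD_V)^{\cdot n}) + \frac{k^{n-1}}{2(n-1)!}((H_V - xD_V)^{\cdot n-1}\cdot(-K_V)) + \cdots$, where the subleading term comes from the standard Riemann–Roch correction $-\frac{1}{2}K_V$.
\item Sum over $j$. The sum $\sum_j$ over $ak < j \le bk$ is a Riemann sum approximating $k\int_a^b (\cdots)\,dx$. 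The leading $k^n$ term summed over $\sim (b-a)k$ values of $j$ produces $k^{n+1}\cdot\frac{1}{n!}\int_a^b((H_V-xD_V)^{\cdot n})\,dx$.
\item The subleading $k^n$ term has two sources: (i) the $\frac{k^{n-1}}{2(n-1)!}((\cdots)^{\cdot n-1}\cdot(-K_V))$ term summed over $j$ contributes $k^n \cdot \frac{1}{2(n-1)!}\int_a^b((H_V-xD_V)^{\cdot n-1}\cdot(-K_V))\,dx$; and (ii) the Euler–Maclaurin correction to the Riemann sum of the leading $k^n$ term. The Euler–Maclaurin boundary/derivative correction produces exactly the extra $((\cdots)^{\cdot n-1}\cdot(-D_V))$ piece, which combines with $(-K_V)$ to give $(-K_V - D_V)$ in the final formula.
\end{enumerate}

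**The main obstacle.** The hard part will be step (3): controlling the error in replacing the sum by the integral to order $k^n$, i.e., making the Euler–Maclaurin correction rigorous and identifying precisely where the $-D_V$ term comes from. One must show the leading term, as a function of $x = j/k$, is $\frac{1}{n!}\vol$-type polynomial whose $x$-derivative contributes $-((H_V-xD_V)^{\cdot n-1}\cdot D_V)$ under the discrete-to-continuous correction. I expect to use the exact polynomiality of $\chi$ for divisible $k$ (so that $\sum_{j}$ of a genuine polynomial in $j$ can be evaluated via Faulhaber/Euler–Maclaurin with explicit top corrections), rather than soft Riemann-sum estimates, to pin down the $k^n$ coefficient exactly. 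A secondary technical point is handling the finitely many boundary values $j$ near $ak$ and $bk$ (where ampleness degenerates), but these contribute only $O(k^{n-1})$ after summing a bounded number of $O(k^{n-1})$-sized terms, so they are absorbed into the error.
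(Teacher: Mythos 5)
Your overall architecture --- replace $h^0$ by $\chi$, expand each term by Riemann--Roch, and recover the $k^n$ coefficient from the $-\tfrac{1}{2}K_V$ term plus an Euler--Maclaurin correction supplying the extra $-D_V$ --- coincides with the paper's, and your step (3), which you flag as the main obstacle, is in fact the easy part: your accounting of how $(-K_V)$ and $(-D_V)$ combine is exactly right. The genuine gap is step (1). Since $V$ is singular and $H_V$, $D_V$ are only $\Q$-Cartier Weil divisors, for most $j$ the sheaf $\sO_V(kH_V-jD_V)$ is not a line bundle but merely a rank-one reflexive sheaf, and there is no ``standard Riemann--Roch correction $-\tfrac{1}{2}K_V$'' that you can quote for it: the asymptotic Riemann--Roch statements available (e.g.\ \cite[Lemma 3.5]{odk}, \cite[(4.16)]{RT}) concern line bundles on reduced, $S_2$, Gorenstein-in-codimension-one schemes. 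Moreover $\chi\left(V,\sO_V(kH_V-jD_V)\right)$ is in general only a quasi-polynomial in $(k,j)$, whose lower coefficients may depend on $j$ modulo the Cartier index $h$ of $D_V$; so your fallback of ``exact polynomiality of $\chi$'' is not available across residue classes, and ``passing to a multiple'' does not apply because the sum ranges over all $j$. This is precisely the difficulty the paper's proof is built to overcome: it forms the finite cyclic cover $\theta\colon\tilde{V}=\Spec_{\sO_V}\bigoplus_{i=0}^{h-1}\sO_V(-iD_V)\to V$, which packages $h$ consecutive terms of the sum into a single Euler characteristic $\chi(\tilde{V},\theta^*\sO_V(kH_V-j'hD_V))$ of an honest line bundle, proves (Claim \ref{RR_claim}) that $\tilde{V}$ is reduced, $S_2$ and Gorenstein in codimension one with $K_{\tilde{V}}\sim\theta^*(K_V+(h-1)D_V)$, and only then applies Riemann--Roch on $\tilde{V}$; the shifted canonical class $(h-1)D_V$ is what correctly encodes the dependence on residue classes. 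Your proposal contains nothing playing this role; the phrase ``singular asymptotic Riemann-Roch'' would have to be fleshed out into, say, Baum--Fulton--MacPherson Riemann--Roch applied to the reflexive sheaves $\sO_V(-iD_V)$ together with a computation of the codimension-one parts of their $\tau$-classes, none of which is indicated.

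A secondary gap: to get $v(k)-\sum_j\chi=O(k^{n-1})$ you need higher cohomology to vanish for all but a number of indices $j$ that is bounded independently of $k$. Serre vanishing for fixed ample classes does not give this, because the classes $H_V-(j/k)D_V$ vary with $j$ and degenerate to nef classes as $j/k\to a,b$; if the number of ``bad'' $j$ grew like $\varepsilon k$, their total contribution could be of order $k^n$ and would contaminate the very coefficient you are computing. The paper obtains the required uniformity from Fujita's vanishing theorem \cite[Theorem 1.4.35]{L}, which your proposal neither invokes nor replaces.
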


\begin{proof}
We set
\[
v_0(k):=\sum_{j=ak+1}^{bk}\chi\left(V, \sO_V(kH_V-jD_V)\right).
\]
Pick any $c\in(a, b)\in\Q$ and pick $k_0\in\Z_{>0}$ such that both $k_0H_V$ and 
$ck_0D_V$ are Cartier. By Takao Fujita's vanishing theorem \cite[Theorem 1.4.35]{L}, 
there exists a positive integer $k_1$ which is divisible by $k_0$ such that 
\[
H^i\left(V, \sO_V\left((k_1+g_1k_0)H_V-(ck_1+g_2)D_V\right)\right)=0
\]
for any $i>0$, $g_1\in\Z_{>0}$ and $g_2\in\Z$ with $ag_1k_0\leq g_2\leq bg_1k_0$. 
Thus, for any sufficiently divisible positive integer $k$, we have 
$H^i(V, \sO_V(kH_V-jD_V))=0$ for any $i>0$ and any $j\in\Z$ with 
$ak+(c-a)k_1\leq j\leq bk+(c-b)k_1$. Hence $v(k)-v_0(k)$ is equal to
\begin{eqnarray*}
\sum_{\substack{j\in\{ak,\dots,ak+(c-a)k_1-1, \\
bk+(c-b)k_1+1,\dots,bk\}}}\left(h^0\left(V, \sO_V(kH_V-jD_V)\right)
-\chi\left(V, \sO_V(kH_V-jD_V)\right)\right).
\end{eqnarray*}
Thus $v(k)-v_0(k)=O(k^{n-1})$ (see \cite[Theorem VI.2.15]{kollar}). 
Hence it is enough to show the assertion for $v_0(k)$. 

We fix $h\in\Z_{>0}$ such that $hD_V$ is Cartier. 
We consider the $\Z/h\Z$-graded $\sO_V$-algebra 
\[
\sA:=\bigoplus_{i=0}^{h-1}\sO_V(-iD_V)
\]
defined by the effective Cartier divisor $hD_V$ (\cite[Definition 2.52]{KoMo}). 
More precisely, we use the multiplication 
\begin{eqnarray*}
\begin{cases}
\sO_V(-iD_V)\otimes\sO_V(-jD_V)\to\sO_V(-(i+j)D_V) & (i+j< h),\\
\sO_V(-iD_V)\otimes\sO_V(-jD_V)\to\sO_V(-(i+j)D_V)\xrightarrow{s}
\sO_V(-(i+j-h)D_V) & (i+j\geq h),
\end{cases}
\end{eqnarray*}
where $s\in H^0(V, \sO_V(hD_V))$ corresponds to the effective Cartier divisor $hD_V$. 
We consider the finite morphism 
\[
\theta\colon\tilde{V}:=\Spec_{\sO_V}\sA\to V. 
\]

\begin{claim}\label{RR_claim}
$\tilde{V}$ is a reduced algebraic scheme which is Gorenstein in codimension one 
and satisfies that Serre's condition $S_2$. Moreover, we have 
$K_{\tilde{V}}\sim\theta^*(K_V+(h-1)D_V)$, where $K_{\tilde{V}}$ is the canonical divisor 
of $\tilde{V}$ $($see \cite[Definition-Remark 2.7]{hartshorne}$)$.
\end{claim}

\begin{proof}[Proof of Claim \ref{RR_claim}]
It follows from the definition that $\tilde{V}$ satisfies Serre's condition $S_2$ and is 
reduced. Pick any irreducible component $D_0$ of $D_V$ and let $c_0$ be the 
coefficient of $D_V$ at $D_0$. Let $p\in D_0$ be a general point. Then $p\in V$ is 
smooth and we can take an analytic local coordinate $x_1,\dots,x_n$ around $p$ 
such that $D_V$ is defined by the equation $x_1^{c_0}=0$. Then $\tilde{V}$ 
around over $p$ is defined by the equation $(t^h-x_1^{c_0h}=0)\subset V\times\A^1_t$. 
Thus $\tilde{V}$ is Gorenstein in codimension one. Since the canonical sheaf 
$\omega_{V\times\A^1}$ of $V\times\A^1$ is generated by 
\[
\frac{d(t^h-x_1^{c_0h})}{t^{h-1}}\wedge dx_1\wedge\cdots\wedge dx_n,
\]
the canonical sheaf $\omega_{\tilde{V}}$ of $\tilde{V}$ is generated by 
\[
\frac{1}{t^{h-1}}dx_1\wedge\cdots\wedge dx_n
\]
around over $p$. Since $(t^{h-1})^h=(x_1^{c_0h})^{h-1}$, we get the assertion. 
\end{proof}

For any $k\in\Z_{>0}$ and $j\in\Z$ such that $kH_V$ is Cartier and $j$ is divisible 
by $h$, we have 
\begin{eqnarray*}
&&\chi\left(\tilde{V}, \theta^*\sO_V(kH_V-jD_V)\right)
=\chi\left(V, \sA\otimes\sO_V(kH_V-jD_V)\right)\\
&=&\sum_{i=0}^{h-1}\chi\left(V, \sO_V\left(kH_V-(i+j)D_V\right)\right).
\end{eqnarray*}
Therefore, by \cite[Lemma 3.5]{odk} and \cite[(4.16)]{RT}, 
for a sufficiently divisible positive integer $k$, $v_0(k)$ is equal to
\begin{eqnarray*}
&&\sum_{j'=ak/h+1}^{bk/h-1}\chi\left(\tilde{V}, \theta^*\sO_V(kH_V-j'hD_V)\right)
+\sum_{j\in\{ak+1,\dots,ak+h-1\}\cup\{bk\}}\chi\left(V, \sO_V(kH_V-jD_V)\right)\\
&=&\sum_{j'=ak/h+1}^{bk/h}\chi\left(\tilde{V}, \theta^*\sO_V(kH_V-j'hD_V)\right)\\
&+&k^n\frac{h-1}{n!}\left(((H_V-aD_V)^{\cdot n})-((H_V-bD_V)^{\cdot n})\right)+O(k^{n-1})\\
&=&\sum_{j'=ak/h+1}^{bk/h}\left\{\frac{k^n}{n!}
\left(\theta^*(H_V-
(j'/k)hD_V)^{\cdot n}\right)
-\frac{k^{n-1}}
{2\cdot(n-1)!}\left(\theta^*(H_V-(j'/k)hD_V)^{\cdot n-1}\cdot K_{\tilde{V}}\right)
\right\}\\
&+&k^n\frac{h-1}{n!}\left(((H_V-aD_V)^{\cdot n})-((H_V-bD_V)^{\cdot n})\right)+O(k^{n-1})\\
&=&\frac{k^{n+1}}{n!}\int_{a/h}^{b/h}h((H_V-xhD_V)^{\cdot n})dx
+k^n\biggl\{\frac{h-1}{n!}
\left(((H_V-aD_V)^{\cdot n})-((H_V-bD_V)^{\cdot n})\right)\\
&+&\frac{h}{2\cdot(n-1)!}\int_{a/h}^{b/h}\left((H_V-xhD_V)^{\cdot n-1}\cdot
(-K_V-(h-1)D_V)\right)dx\\
&+&\frac{h}{2\cdot(n-1)!}\int_{a/h}^{b/h}\left((H_V-xhD_V)^{\cdot n-1}\cdot(-hD_V)
\right)dx\biggr\}+O(k^{n-1}).
\end{eqnarray*}
Therefore we have proved Proposition \ref{RR_prop}. 
\end{proof}

\section{Interpretation of $\eta(D)$}\label{interpret_section}

In this section, we fix a $\Q$-Fano variety $X$ of dimension $n$ and a nonzero 
effective Weil divisor $D$ on $X$. In this section, we calculate the value $\eta(D)$ 
in Proposition \ref{divst_prop} via intersection numbers, via the volume functions, and 
via the restricted volume functions. 

\subsection{Via intersection numbers}\label{intersection_section}

Let $r\in\Z_{>0}$ which satisfies the generating property with respects to 
$(X, -K_X; -D)$, let $(\sB, \sL)/\A^1$ be the basic semi test configuration of 
$(X, -rK_X)$ via $D$, and let $\{(\tau_i, X_i)\}_{1\leq i\leq m}$ be the ample model 
sequence of $(X, -K_X; -D)$. 
We set $f(k):=\sum_{j=1}^{kr\tau(D)}h^0(X, \sO_X(-krK_X-jD))$ as in 
Proposition \ref{divst_prop}. By \cite[Remark 2.4 (i)]{KKL}, for any sufficiently 
divisible positive integer $k$, we have 
\[
f(k)=\sum_{i=1}^m\sum_{j=kr\tau_{i-1}+1}^{kr\tau_i}h^0\left(X_i, \sO_{X_i}(
-krK_{X_i}-jD_i)\right). 
\]
By Proposition \ref{RR_prop}, for any $1\leq i\leq m$, we have 
\begin{eqnarray*}
&&\sum_{j=kr\tau_{i-1}+1}^{kr\tau_i}h^0\left(X_i, \sO_{X_i}(-krK_{X_i}-jD_i)\right)\\
&=&\frac{k^{n+1}}{n!}\int_{r\tau_{i-1}}^{r\tau_i}((-rK_{X_i}-xD_i)^{\cdot n})dx\\
&+&\frac{k^n}{2\cdot(n-1)!}\int_{r\tau_{i-1}}^{r\tau_i}((-rK_{X_i}-xD_i)^{\cdot n-1}\cdot
(-K_{X_i}-D_i))dx+O(k^{n-1}). 
\end{eqnarray*}
Thus the $(n+1)$-th and $n$-th coefficients $f_{n+1}$ and $f_n$ of $f(k)$ are 
\begin{eqnarray*}
f_{n+1} &= & \sum_{i=1}^m\frac{r^{n+1}}{n!}\int_{\tau_{i-1}}^{\tau_i}
((-K_{X_i}-xD_i)^{\cdot n})dx,\\
f_n & = & \sum_{i=1}^m\frac{r^n}{2\cdot(n-1)!}\int_{\tau_{i-1}}^{\tau_i}
((-K_{X_i}-xD_i)^{\cdot n-1}\cdot(-K_{X_i}-D_i))dx.
\end{eqnarray*}
Therefore we have proved the following. 

\begin{thm}\label{eta_thm}
Let $X$ be a $\Q$-Fano variety of dimension $n$ and $D$ be a nonzero effective 
Weil divisor on $X$. Then 
\[
\eta(D)=\sum_{i=1}^mn\int_{\tau_{i-1}}^{\tau_i}(1-x)((-K_{X_i}-xD_i)^{\cdot n-1}
\cdot D_i)dx,
\]
where $\eta(D)$ is the value introduced in Proposition \ref{divst_prop}. 
In particular, divisorial stability and semistability of $(X, -K_X)$ along $D$ do not 
depend on the choice of the value $r$ in Definition \ref{divst_dfn}.
\end{thm}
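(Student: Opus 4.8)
The plan is to substitute the expressions for the leading coefficients $f_{n+1}$ and $f_n$ just obtained into the identity $\eta(D)=\frac{n!}{r^{n+1}}(nf_{n+1}-2rf_n)$ of Proposition \ref{divst_prop}, and then to merge the two resulting families of integrals into a single one by a short multilinearity manipulation of intersection products. Throughout, fix $1\leq i\leq m$ and abbreviate $L_x:=-K_{X_i}-xD_i$, which is $\Q$-Cartier (indeed $\R$-Cartier for real $x$) on $X_i$ for $x\in[\tau_{i-1},\tau_i]$, since both $K_{X_i}$ and $D_i$ are $\Q$-Cartier by Remark \ref{model_rmk}\eqref{model_rmk1}; this is all that is needed for the intersection numbers below to be defined.

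First I would record, directly from the displayed formulas for $f_{n+1}$ and $f_n$, that
\[
nf_{n+1}=\frac{r^{n+1}}{(n-1)!}\sum_{i=1}^m\int_{\tau_{i-1}}^{\tau_i}(L_x^{\cdot n})\,dx
\]
and
\[
2rf_n=\frac{r^{n+1}}{(n-1)!}\sum_{i=1}^m\int_{\tau_{i-1}}^{\tau_i}(L_x^{\cdot n-1}\cdot(-K_{X_i}-D_i))\,dx,
\]
so that both contributions carry the common prefactor $r^{n+1}/(n-1)!$ and are organized over the same intervals of the ample model sequence.

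The crux is a pointwise identity of the two integrands. Using the relation $-K_{X_i}-D_i=L_x-(1-x)D_i$ and the multilinearity of the intersection product, I obtain
\[
(L_x^{\cdot n})-(L_x^{\cdot n-1}\cdot(-K_{X_i}-D_i))=(1-x)(L_x^{\cdot n-1}\cdot D_i).
\]
Subtracting the two displays, inserting this identity, and multiplying by $n!/r^{n+1}$ cancels the power of $r$ and turns $1/(n-1)!$ into $n$, yielding exactly
\[
\eta(D)=\sum_{i=1}^m n\int_{\tau_{i-1}}^{\tau_i}(1-x)(L_x^{\cdot n-1}\cdot D_i)\,dx.
\]
Since the right-hand side contains no $r$, the independence of divisorial (semi)stability from the choice of $r$ in Definition \ref{divst_dfn} is immediate.

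I do not expect any genuine obstacle here. All of the substantive analytic and intersection-theoretic work has already been carried out: the asymptotic Riemann–Roch estimate with the extra $(-K_V-D_V)$ term in the $n$-th coefficient is Proposition \ref{RR_prop}, and the reorganization of $f(k)$ along the ample model sequence (together with the computation of $f_{n+1}$ and $f_n$) is the paragraph preceding the statement. The only points requiring care are the bookkeeping of the combinatorial constants — the cancellation of $r^{n+1}$, the passage $n!/(n-1)!=n$, and the factor $2$ matching the $\tfrac{1}{2}$ in the $n$-th coefficient — and checking that the multilinear expansion is legitimate, which holds because $-K_{X_i}$ and $D_i$ are $\Q$-Cartier on $X_i$.
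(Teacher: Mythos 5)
Your proposal is correct and is exactly the paper's own argument: the paper computes $f_{n+1}$ and $f_n$ via Proposition \ref{RR_prop} and the ample model decomposition of $f(k)$, then (implicitly, in the step ``Therefore we have proved the following'') substitutes into $\eta(D)=\frac{n!}{r^{n+1}}(nf_{n+1}-2rf_n)$, with the same multilinearity identity $(L_x^{\cdot n})-(L_x^{\cdot n-1}\cdot(-K_{X_i}-D_i))=(1-x)(L_x^{\cdot n-1}\cdot D_i)$ producing the stated integrand and the manifest $r$-independence. Your bookkeeping of the constants ($r^{n+1}$ cancellation, $n!/(n-1)!=n$, the factor $2$ against the $\tfrac12$) all checks out.
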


\subsection{Via the volume functions}

\begin{thm}\label{volint_thm}
Let $X$, $D$ and $\eta(D)$ be as above. Then we have
\begin{eqnarray*}
\eta(D) &=& \vol_X(-K_X)-\int_0^{\tau(D)}\vol_X(-K_X-xD)dx\\
 &=& \vol_X(-K_X)-\int_0^\infty\vol_X(-K_X-xD)dx\\
 &=& \vol_X(-K_X)-n\int_0^{\tau(D)}x\vol_{X|D}(-K_X-xD)dx.
\end{eqnarray*}
\end{thm}

\begin{proof}
We note that $\eta(D)$ is equal to
\[
\sum_{i=1}^m\left\{\left[(x-1)((-K_{X_i}-xD_i)^{\cdot n})
\right]_{\tau_{i-1}}^{\tau_i}-\int_{\tau_{i-1}}^{\tau_i}((-K_{X_i}-xD_i)^{\cdot n})dx\right\}.
\]
By Lemma \ref{vol_lem}, this value is equal to
\[
\vol_X(-K_X)-\int_0^{\tau(D)}\vol_X(-K_X-xD)dx. 
\]
On the other hand, by Definition \ref{rvol_dfn} and Proposition \ref{rvol_prop}, this value 
is also equal to 
\begin{eqnarray*}
n\int_0^{\tau(D)}(1-x)\vol_{X|D}(-K_X-xD)dx
=\vol_X(-K_X)-n\int_0^{\tau(D)}x\vol_{X|D}(-K_X-xD)dx.
\end{eqnarray*}
Thus the assertion follows.
\end{proof}

\section{Toric case}\label{toric_section}

In this section, we see divisorial stability for toric $\Q$-Fano varieties. 
For the theory of toric varieties, we refer the readers to \cite{cox}. 
We fix a lattice $M:=\bigoplus_{i=1}^n\Z e_i$, a dual lattice 
$N:=\Hom_\Z(M, \Z)=\bigoplus_{i=1}^m\Z e_i^*$, and we set 
$M_\R:=M\otimes_\Z\R$ and $N_\R:=N\otimes_\Z\R$. 
We have a natural dual pairing $\langle, \rangle\colon M_\R\times N_\R\to\R$ with 
$\langle e_i, e^*_j\rangle=\delta_{ij}$. 
We fix a canonical Lebesgue measure $dx$ on $M_\R$, for which $M_\R/M$ is of 
measure $1$. 

Let $X$ be a toric $\Q$-Fano variety of dimension $n$ corresponds to a fan 
$\Sigma$ in $N_\R$. Let $\{v_\lambda\}_{\lambda\in\Lambda}$ be the set of 
the primitive generators of the one dimensional cones in $\Sigma$, let $D_\lambda$ be 
the torus invariant prime divisor on $X$ associated to the one dimensional cone 
$\R_{\geq 0}v_\lambda\in\Sigma$. We set the rational polytope $P\subset M_\R$ 
such that 
\[
P:=P_{(X, -K_X=\sum_{\lambda\in\Lambda}D_\lambda)}:=\{u\in M_\R\,\,|\,\,
\langle u, v_\lambda\rangle\geq -1\,\,(\forall\lambda\in\Lambda)\}
\]
as in \cite[(4.3.2)]{cox}. As is known in \cite[Proposition 3.2]{BB}, 
$P$ is a rational polytope which contains the origin in its interior. 
Let $b_P\in M_\R$ be the barycenter of $P$, that is, 
\[
b_P:=\frac{\int_Pxdx}{\int_Pdx}.
\]

\begin{thm}\label{etatoric_thm}
For any $\lambda\in\Lambda$, the signature of $\eta(D_\lambda)$ is equal to 
the signature of $-\langle b_P, v_\lambda\rangle$. 
\end{thm}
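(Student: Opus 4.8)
The plan is to use the volume-function expression for $\eta(D_\lambda)$ provided by Theorem \ref{volint_thm}, namely
\[
\eta(D_\lambda)=\vol_X(-K_X)-\int_0^\infty\vol_X(-K_X-xD_\lambda)\,dx,
\]
and to reinterpret every term on the right-hand side through the toric dictionary that relates global sections of $\sO_X(-kK_X-jD_\lambda)$ to lattice points of dilated polytopes. The key observation is that the polytope associated to $-K_X-xD_\lambda$ is obtained from $P$ by pushing in the single facet corresponding to $v_\lambda$: explicitly, for $x\in[0,\tau(D_\lambda))$ the relevant polytope is
\[
P_x:=\{u\in M_\R\,\,|\,\,\langle u, v_\lambda\rangle\geq -1+x,\,\,\langle u, v_{\lambda'}\rangle\geq -1\,\,(\forall\lambda'\neq\lambda)\}.
\]
Since the volume function of an ample (or big) divisor on a toric variety equals $n!$ times the Euclidean volume of its polytope, I would identify $\vol_X(-K_X-xD_\lambda)=n!\cdot\operatorname{vol}_{\text{Leb}}(P_x)$, where $\operatorname{vol}_{\text{Leb}}$ denotes Lebesgue measure, with $P_0=P$. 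The pseudo-effective threshold $\tau(D_\lambda)$ is then exactly the value of $x$ at which $P_x$ degenerates to measure zero.

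The heart of the computation is to differentiate $\operatorname{vol}_{\text{Leb}}(P_x)$ in $x$. Moving the facet $\langle u,v_\lambda\rangle=-1+x$ inward at unit rate with respect to the parameter $x$, the derivative of the volume is (up to a lattice-normalization factor) the negative of the $(n-1)$-dimensional measure of the slice
\[
P_x\cap\{u\in M_\R\,\,|\,\,\langle u,v_\lambda\rangle=-1+x\}.
\]
This is the standard toric incarnation of the restricted volume $\vol_{X|D_\lambda}(-K_X-xD_\lambda)$ appearing in Proposition \ref{rvol_prop}. Integrating by the Fubini theorem in the direction dual to $v_\lambda$, I would rewrite the two integrals $\int_0^\infty\vol_X(-K_X-xD_\lambda)\,dx$ and (via Theorem \ref{volint_thm}) $n\int_0^{\tau(D_\lambda)}x\,\vol_{X|D_\lambda}(-K_X-xD_\lambda)\,dx$ as integrals over $P$ of, respectively, $1$ and the linear coordinate $\langle u,v_\lambda\rangle+1$ against Lebesgue measure. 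Concretely, foliating $P$ by the hyperplanes $\langle u,v_\lambda\rangle=\text{const}$ and using that $x=\langle u,v_\lambda\rangle+1$ on the moving facet, the combination $\vol_X(-K_X)-n\int_0^{\tau(D_\lambda)}x\,\vol_{X|D_\lambda}(-K_X-xD_\lambda)\,dx$ collapses to
\[
\eta(D_\lambda)=-n!\int_P\langle u, v_\lambda\rangle\,dx
=-n!\,\Big\langle\int_P u\,dx,\,v_\lambda\Big\rangle.
\]

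Finally, I would extract the sign. Writing the barycenter as $b_P=\left(\int_P dx\right)^{-1}\int_P u\,dx$ and noting $\int_P dx>0$ and $n!>0$, the displayed formula gives
\[
\eta(D_\lambda)=-n!\left(\int_P dx\right)\langle b_P, v_\lambda\rangle,
\]
so that the signature of $\eta(D_\lambda)$ coincides with the signature of $-\langle b_P, v_\lambda\rangle$, which is exactly the claim. I expect the main obstacle to be the bookkeeping in the Fubini-type change of variables: one must correctly match the lattice-normalized Lebesgue measure $dx$ fixed in this section (for which $M_\R/M$ has measure $1$) with the induced measure on the slices, and verify that the Jacobian factors arising from slicing transverse to the integral primitive vector $v_\lambda$ are exactly absorbed so that no spurious normalization constant survives. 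Handling the contributions of the various linear pieces $\{(\tau_i,X_i)\}$ from the ample model sequence simultaneously — rather than analyzing each wall-crossing separately — is cleanest if one works directly with the global polytope $P$ and its facet-pushing deformation, since on the toric side the entire ample model sequence is encoded in the single continuous family $P_x$; reconciling this global picture with the piecewise description of Theorem \ref{eta_thm} is the step that requires the most care.
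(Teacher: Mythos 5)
Your proposal is correct and follows essentially the same route as the paper's own proof: the paper likewise identifies $\vol_X(-K_X-xD_\lambda)$ with $n!$ times the volume of the truncated polytope $P|_{\langle u,v_\lambda\rangle\geq -1+x}$ (justified by lattice-point counting and \cite[Proposition 2.1]{LM}), identifies the restricted volume with $(n-1)!$ times the slice volume $Q(x)$, and then combines Theorem \ref{volint_thm} with the same slicing/Fubini computation to arrive at $\eta(D_\lambda)=-n!\,\vol_{M_\R}(P)\,\langle b_P,v_\lambda\rangle$. The normalization issue you flag at the end is dispatched in the paper simply by first applying a lattice automorphism so that $v_\lambda=e_1^*$, after which the slicing direction is a lattice coordinate direction and no Jacobian factor can appear (and note one small slip in your write-up: it is $\vol_X(-K_X)$, not $\int_0^\infty\vol_X(-K_X-xD_\lambda)\,dx$, that equals $n!\int_P 1\,dx$; the latter integral equals $n!\int_P(\langle u,v_\lambda\rangle+1)\,dx$, which is what your final display actually uses).
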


\begin{proof}
After a certain lattice transform, we can assume that $v_\lambda=e_1^*$. 

\begin{claim}\label{etatoric_claim}
For any $x\in[0, +\infty)$, we have
\[
\vol_X(-K_X-xD_\lambda)=n!\cdot\vol_{M_\R}(P|_{\nu_1\geq -1+x}), 
\]
where 
\begin{eqnarray*}
P|_{\nu_1\geq -1+x}&:=&\{u\in P\,\,|\,\,\langle u, e_1^*\rangle\geq -1+x\},\\
\vol_{M_\R}(P|_{\nu_1\geq -1+x})&:=&\int_{P|_{\nu_1\geq -1+x}}dx.
\end{eqnarray*}
\end{claim}

\begin{proof}[Proof of Claim \ref{etatoric_claim}]
It is enough to prove Claim \ref{etatoric_claim} for the case $x\in[0, +\infty)\cap\Q$ 
since both $\vol_X(-K_X-xD_\lambda)$ and $n!\cdot\vol_{M_\R}(P|_{\nu_1\geq -1+x})$
are continuous function over $x$. For a sufficiently divisible positive integer $k$, 
we have 
\[
H^0\left(X, \sO_X(-kK_X)\right)=\bigoplus_{u\in kP\cap M}\C\chi^u, 
\]
where $\chi^u$ is the character of the algebraic torus $(\G_m)^n$ defined  by $u\in M$ 
(see \cite[(1.1.1)]{cox}). Moreover, $H^0(X, \sO_X(-kK_X-kxD_\lambda))$ is equal to, 
as a subspace, 
\[
\bigoplus_{\substack{u\in kP\cap M\\ \langle u, e_1^*\rangle\geq -k+kx}}\C\chi^u
\]
(see \cite[Section 4.3]{cox}). Thus we have 
\[
h^0\left(X, \sO_X(-kK_X-kxD_\lambda)\right)=\#\{u\in k(P|_{\nu_1\geq -1+x})\cap M\}.
\]
Hence, by \cite[Proposition 2.1]{LM}, we have the assertion. 
\end{proof}

Let $Q(x)$ be the (restricted) volume of 
\[
P|_{\nu_1=-1+x}:=\{u\in P\,\,|\,\,\langle u, e_1^*\rangle=-1+x\}\subset\R^{n-1}.
\]
By Claim \ref{etatoric_claim}, we have 
\[
Q(x)=-\frac{1}{n!}\frac{d}{dx}\vol_X(-K_X-xD_\lambda)
=\frac{1}{(n-1)!}\vol_{X|D_\lambda}(-K_X-xD_\lambda)
\]
for any $x\in(0, \tau(D_\lambda))$. Thus we get the equation 
\[
\eta(D_\lambda)=n!\left(\vol_{M_\R}(P)-\int_0^{\tau(D_\lambda)}xQ(x)dx\right)
\]
from Theorem \ref{volint_thm}. On the other hand, we have 
\begin{eqnarray*}
\langle b_P, e_1^*\rangle=\frac{\int_P\langle x, e_1^*\rangle dx}
{\vol_{M_\R}(P)}=\frac{\int_0^{\tau(D_\lambda)}(-1+x)Q(x)dx}{\vol_{M_\R}(P)}
=\frac{\int_0^{\tau(D_\lambda)}xQ(x)dx}{\vol_{M_\R}(P)}-1.
\end{eqnarray*}
Thus we get the assertion. 
\end{proof}

\begin{corollary}\label{toric_cor}
Let $X$, $P$ and $b_P$ be as above. 
\begin{enumerate}
\renewcommand{\theenumi}{\arabic{enumi}}
\renewcommand{\labelenumi}{(\theenumi)}
\item\label{toric_cor1}
The pair $(X, -K_X)$ is not divisorially stable. 
\item\label{toric_cor2}
Assume that $b_P\neq 0$. Then there exists a torus invariant prime divisor $D$ on $X$ 
such that $(X, -K_X)$ is not divisorially semistable along $D$. 
\end{enumerate}
\end{corollary}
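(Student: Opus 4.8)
The plan is to deduce both statements directly from Theorem \ref{etatoric_thm}, which tells us that the sign of $\eta(D_\lambda)$ matches the sign of $-\langle b_P, v_\lambda\rangle$ for each torus invariant prime divisor $D_\lambda$. The key observation is that the primitive generators $\{v_\lambda\}_{\lambda\in\Lambda}$ of the one dimensional cones of $\Sigma$ positively span $N_\R$, since $X$ is complete (the fan is complete). This spanning property is exactly what lets me convert a statement about the single vector $b_P$ into a statement about the collection of pairings $\langle b_P, v_\lambda\rangle$.

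For part \eqref{toric_cor1}, I would argue as follows. Consider the sum $\sum_{\lambda\in\Lambda}\langle b_P, v_\lambda\rangle = \langle b_P, \sum_{\lambda\in\Lambda}v_\lambda\rangle$. Regardless of the value of $b_P$, I claim at least one of the $\eta(D_\lambda)$ fails to be strictly positive. Indeed, if $b_P=0$, then every $\langle b_P, v_\lambda\rangle=0$, so by Theorem \ref{etatoric_thm} every $\eta(D_\lambda)=0$, and in particular $(X,-K_X)$ is not divisorially stable along $D_\lambda$. If $b_P\neq 0$, then because the $v_\lambda$ positively span $N_\R$ (equivalently, the cone they generate is all of $N_\R$), the linear functional $\langle b_P,-\rangle$ cannot be strictly negative on every $v_\lambda$; there exists some $\lambda$ with $\langle b_P, v_\lambda\rangle\geq 0$, whence $-\langle b_P,v_\lambda\rangle\leq 0$ and so $\eta(D_\lambda)\leq 0$ by Theorem \ref{etatoric_thm}. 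In either case divisorial stability along that $D_\lambda$ fails, so $(X,-K_X)$ is not divisorially stable.

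For part \eqref{toric_cor2}, assume $b_P\neq 0$. I need to produce a torus invariant prime divisor $D$ with $\eta(D)<0$, i.e.\ with $\langle b_P, v_\lambda\rangle>0$ for some $\lambda$. Again invoking that the $v_\lambda$ positively span $N_\R$: since $b_P\neq 0$ the functional $\langle b_P,-\rangle$ is not identically zero, and a nonzero linear functional that were $\leq 0$ on a positively spanning set would force the spanned cone to lie in a closed half space, contradicting that the cone is all of $N_\R$. Hence $\langle b_P, v_\lambda\rangle>0$ for some $\lambda$, giving $-\langle b_P, v_\lambda\rangle<0$ and therefore $\eta(D_\lambda)<0$ by Theorem \ref{etatoric_thm}. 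Taking $D:=D_\lambda$ shows $(X,-K_X)$ is not divisorially semistable along $D$.

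The only genuinely nonformal ingredient is the positive spanning property of $\{v_\lambda\}$, which I expect to be the main (though mild) point to nail down. It follows from completeness of the fan $\Sigma$ defining the $\Q$-Fano variety $X$: a complete fan covers $N_\R$, so its rays cannot all lie in any closed half space through the origin, and their nonnegative span is all of $N_\R$. Once this is in hand, both parts are immediate sign arguments via Theorem \ref{etatoric_thm}, with no further computation required.
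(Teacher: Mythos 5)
Your proof is correct, and it reaches the key point by a genuinely different mechanism than the paper. Both arguments reduce, via Theorem \ref{etatoric_thm}, to two facts: (i) if $b_P=0$ then every $\eta(D_\lambda)=0$, so divisorial stability fails; (ii) if $b_P\neq 0$ then some $\lambda$ satisfies $\langle b_P, v_\lambda\rangle>0$, so divisorial semistability fails along $D_\lambda$. For (ii) you invoke completeness of the fan: the primitive ray generators $\{v_\lambda\}$ positively span $N_\R$ (every point of $N_\R$ lies in some cone of $\Sigma$, and each cone is the nonnegative span of its rays), so a nonzero functional $\langle b_P,-\rangle$ that were nonpositive on all $v_\lambda$ would be nonpositive on all of $N_\R$, a contradiction. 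The paper instead argues on the polytope side: it follows the half-line $\{(1-t)b_P\,|\,t\in\R_{\geq 0}\}$ from $b_P$ through the origin until it exits $P$ at a point $c_P=-sb_P$ with $s>0$, lying on a facet $F_P$; since $F_P$ sits in the hyperplane $\langle\cdot,v_\lambda\rangle=-1$ for the corresponding ray generator $v_\lambda$, one gets $\langle b_P, v_\lambda\rangle=1/s>0$. The two routes are equally rigorous and equally short, but the paper's construction is constructive: it singles out a distinguished destabilizing divisor (the facet hit by the ray opposite to $b_P$, cf.\ \cite{li}), which is what underlies the explicit destabilizing flag ideals in Examples \ref{toric1_ex} and \ref{toric2_ex}; your duality argument yields existence with minimal geometric input, using only that $\Sigma$ is complete. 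One small remark: in part (1), for $b_P\neq 0$ you settle for $\eta(D_\lambda)\leq 0$ for some $\lambda$, which indeed suffices to negate stability, though citing your own part (2) would give the strict inequality at no extra cost.
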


\begin{proof}
We consider the case $b_P\neq 0$. 
Let $c_P\in P$ be the intersection of the boundary $\partial P$ of $P$ 
and the half line 
\[
\{(1-t)b_P\,|\,t\in\R_{\geq 0}\}
\]
(cf.\ \cite{li}). 
Let $F_P$ be a facet (that is, $(n-1)$-dimensional face) of $P$ with $c_P\in F_P$, and 
let $\R_{\geq 0}v_\lambda$ be the one dimensional cone in $\Sigma$ associated to 
$F_P$. By construction, we have 
$\langle b_P, v_\lambda\rangle>0$. Thus $\eta(D_\lambda)<0$ 
by Theorem \ref{etatoric_thm}. 

If $b_P=0$, then $\langle b_P, v_\lambda\rangle=0$ for any $\lambda\in\Lambda$. 
Thus $\eta(D)=0$ for any torus invariant prime divisor $D$ on $X$. 
\end{proof}

\begin{remark}\label{toric_rmk}
The converse of Theorem \ref{introtoric_thm} follows from \cite[Theorem 1.2]{BB}. 
See also \cite{WZ, ZZ}. 
\end{remark}

As an immediate consequence of Corollary \ref{toric_cor}, for any 
non-K-semistable toric $\Q$-Fano variety, we can explicitly construct a flag ideal 
such that the Donaldson-Futaki invariant of the associated semi test configuration 
is strictly negative. In fact, the basic flag ideal of $(X, -rK_X; -D)$ for some $r\in\Z_{>0}$ 
and for some torus invariant $D$ is a desired flag ideal. 
We note that, for a toric $\Q$-Fano variety $X$ and a torus invariant prime divisor 
$D_\lambda$ on $X$, a positive integer $r\in\Z_{>0}$ satisfies the generating property 
with respects to $(X, -K_X; -D_\lambda)$ if and only if the $\C$-algebra
\[
\bigoplus_{k\geq 0}H^0\left(X, \sO_X(-krK_X)\right)
\]
is generated by $H^0(X, \sO_X(-rK_X))$. Indeed, as we have seen in the proof of 
Theorem \ref{etatoric_thm}, the space $H^0(X, \sO_X(-krK_X-jD_\lambda))$ is equal to 
\[
\bigoplus_{\substack{u\in krP\cap M\\ \langle u, v_\lambda\rangle\geq -kr+j}}\C\chi^u
\]
(see also \cite[Definition 2.2.9]{cox}). We see some examples. 

\begin{example}\label{toric1_ex}
Let $X$ be the blowup of $\pr^2$ along one point and let 
$E$ be the $(-1)$-curve on $X$. 
As we have seen in Corollary \ref{toric_cor}, $(X, -K_X)$ is not divisorially semistable 
along $E$. In fact, $\tau(E)=2$, $r=1$ satisfies the generating property 
with respects to $(X, -K_X; -E)$, and the basic flag ideal $\sI=I_2+I_1t+(t^2)$ 
with respects to $(X, -K_X; -E)$ satisfies that 
\begin{eqnarray*}
I_2&=&\sO_X(-2E),\\ 
I_1&=&\sO_X(-E).
\end{eqnarray*} 
In other words, $\sI$ is equal to the ideal sheaf $(\sO_X(-E)+(t))^2$. 
\end{example}

\begin{example}\label{toric2_ex}
Let $X$ be the blowup of $\pr^2$ along distinct two points, let $E_1$, $E_2$ be the 
distinct exceptional divisors of $X\to\pr^2$, and let $E_0$ be the strict transform 
of the line passing though the centers of the blowup. 
As we have seen in Corollary \ref{toric_cor}, $(X, -K_X)$ is not divisorially semistable 
along $E_0$. In fact, $\tau(E_0)=3$, $r=1$ satisfies the generating property 
with respects to $(X, -K_X; -E_0)$, and the basic flag ideal $\sI=I_3+I_2t+I_1t^2+(t^3)$ 
with respects to $(X, -K_X; -E_0)$ satisfies that 
\begin{eqnarray*}
I_3&=&\sO_X(-3E_0-2E_1-2E_2),\\
I_2&=&\sO_X(-2E_0-E_1-E_2),\\
I_1&=&\sO_X(-E_0).
\end{eqnarray*}
\end{example}

\section{On the barycenters of Okounkov bodies}\label{Okounkov_section}

We see the relation between the value $\eta(D)$ and the structure of Okounkov 
bodies of $-K_X$. For the theory of Okounkov bodies, we refer the readers to 
\cite{LM}. (See also \cite{okounkov, BC, KK, nystrom}.) 
In this section, we fix a $\Q$-Fano variety of dimension $n$, an admissible flag 
\[
Z_\bullet\colon X=Z_0\supset Z_1\supset\cdots\supset Z_n=\{\text{point}\}
\]
of $X$, that is, each $Z_i\subset X$ is a subvariety of dimension $n-i$ and 
each $Z_i$ is smooth around $Z_n$. We set $\Delta(-K_X):=\Delta_{Z_\bullet}(-K_X)
\subset\R^n$ the Okounkov body of $-K_X$ with respects to $Z_\bullet$ 
in the sense of \cite{LM}. 

\begin{thm}\label{Okounkov_thm}
Let $b_1$ be the first coordinate of the barycenter of the Okounkov body 
$\Delta(-K_X)$. The following are equivalent: 
\begin{enumerate}
\renewcommand{\theenumi}{\arabic{enumi}}
\renewcommand{\labelenumi}{(\theenumi)}
\item\label{Okounkov_thm1}
$(X, -K_X)$ is divisorially stable $($resp.\ divisorially semistable$)$ 
along $Z_1$. 
\item\label{Okounkov_thm2}
$b_1<1$ $($resp.\ $b_1\leq 1)$ holds. 
\end{enumerate}
\end{thm}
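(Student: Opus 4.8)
The plan is to turn both quantities $b_1$ and $\eta(Z_1)$ into integrals of the restricted volume function $\vol_{X|Z_1}(-K_X-xZ_1)$ and to observe that Theorem \ref{volint_thm} already packages their difference into a single affine identity. Write $\tau:=\tau(Z_1)$ for the pseudo-effective threshold and $\Delta:=\Delta(-K_X)\subset\R^n$. Since $D=Z_1$ is a prime divisor (a subvariety of codimension one), the first coordinate $y_1$ of a point of $\Delta$ records the order of vanishing along $Z_1$, so $y_1$ ranges over $[0,\tau]$. For $t\in[0,\tau]$ let $V(t)$ be the $(n-1)$-dimensional Lebesgue volume of the slice $\{(y_2,\dots,y_n)\,|\,(t,y_2,\dots,y_n)\in\Delta\}$. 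The first step is to invoke the slicing theorem of Lazarsfeld and Musta\c{t}\u{a} (\cite[Corollary 4.25]{LM}, exactly the input used for Proposition \ref{vr_prop}), which gives
\[
V(t)=\frac{1}{(n-1)!}\vol_{X|Z_1}(-K_X-tZ_1)\qquad(t\in[0,\tau)),
\]
where the right-hand side is the restricted volume of the paper by Proposition \ref{vr_prop}; the identity extends to $[0,\tau]$ by continuity (Definition \ref{rvf_dfn}).

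Next I would compute $b_1$ by Fubini over the first coordinate. By definition of the barycenter,
\[
b_1=\frac{\int_0^\tau t\,V(t)\,dt}{\int_0^\tau V(t)\,dt}.
\]
Using the slice identity together with $\vol_X(-K_X)=n!\cdot\vol_{\R^n}(\Delta)=n!\int_0^\tau V(t)\,dt$ (consistent with Definition \ref{rvol_dfn}, since $n\int_0^\tau\vol_{X|Z_1}(-K_X-tZ_1)\,dt=\vol_X(-K_X)$ by the fundamental theorem of calculus), I obtain
\[
b_1=\frac{n\int_0^\tau t\,\vol_{X|Z_1}(-K_X-tZ_1)\,dt}{\vol_X(-K_X)}.
\]

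Finally I would compare this with the third expression for $\eta(Z_1)$ supplied by Theorem \ref{volint_thm}, namely $\eta(Z_1)=\vol_X(-K_X)-n\int_0^{\tau}x\,\vol_{X|Z_1}(-K_X-xZ_1)\,dx$, to arrive at the clean identity
\[
\eta(Z_1)=\vol_X(-K_X)\,(1-b_1).
\]
Because $-K_X$ is big we have $\vol_X(-K_X)>0$, so $\eta(Z_1)$ and $1-b_1$ have the same sign; by Proposition \ref{divst_prop} divisorial stability (resp.\ semistability) along $Z_1$ is precisely the condition $\eta(Z_1)>0$ (resp.\ $\eta(Z_1)\geq 0$), which is equivalent to $b_1<1$ (resp.\ $b_1\leq 1$). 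The only step requiring genuine care is the applicability of the slicing theorem: the Okounkov body is taken with respect to the flag on $X$, whereas the paper's restricted volume is defined on the small $\Q$-factorial modification $\tilde X$. This reconciliation is exactly what Proposition \ref{vr_prop} establishes (using that $\sigma$ is small, so $\tilde Z_1\not\subset\B_+(-K_{\tilde X})=\Exc(\sigma)$), after which the argument is the bookkeeping displayed above.
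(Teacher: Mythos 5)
Your proposal is correct and follows essentially the same route as the paper: both identify the slices of $\Delta(-K_X)$ with the restricted volumes $\frac{1}{(n-1)!}\vol_{X|Z_1}(-K_X-xZ_1)$ via \cite[Corollary 4.25]{LM} (after reconciling the flag on $X$ with its strict transform on the small $\Q$-factorial modification $\tilde{X}$, which is the same device as in Proposition \ref{vr_prop}), and then compare with the expression for $\eta(Z_1)$ in Theorem \ref{volint_thm} to obtain $\eta(Z_1)=\vol_X(-K_X)\,(1-b_1)$ and conclude by positivity of $\vol_X(-K_X)$. The paper's proof is exactly this computation, so there is nothing to add.
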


\begin{proof}
Let $\sigma\colon\tilde{X}\to X$ be a projective small $\Q$-factorial modification 
morphism. Since $Z_n$ is a smooth point of $X$, the morphism $\sigma$ is isomorphism 
around $Z_n$ (see \cite[Theorem VI.1.5]{kollar}). Hence we can consider the strict 
transform $\tilde{Z}_i$ of $Z_i$, and we get an admissible flag $\tilde{Z}_\bullet$ of 
$\tilde{X}$. By the construction of $\Delta(-K_X)$, the Okounkov body 
$\tilde{\Delta}:=\Delta_{\tilde{Z}_\bullet}(-K_{\tilde{X}})$ coincides with $\Delta(-K_X)$. 
By the proof of \cite[Corollary 4.25]{LM} 
(see also the proof of Proposition \ref{vr_prop}), 
\[
\vol_{\R^{n-1}}\left(\tilde{\Delta}|_{\nu_1=x}\right)=\frac{1}{(n-1)!}\vol_{X|Z_1}(-K_X-xZ_1)
\]
holds for any $x\in[0, \tau(Z_1))$. Hence we have 
\[
\eta(Z_1)=n!\left(\vol_{\R^n}\left(\tilde{\Delta}\right)-\int_0^{\tau(Z_1)}
x\vol_{\R^{n-1}}\left(\tilde{\Delta}|_{\nu_1=x}\right)dx\right).
\]
On the other hand, the value $b_1$ is equal to 
\[
\frac{\int_0^{\tau(Z_1)}x\vol_{\R^{n-1}}\left(\tilde{\Delta}|_{\nu_1=x}\right)dx}{\vol_{\R^n}
\left(\tilde{\Delta}\right)}.
\]
Thus the assertion follows. 
\end{proof}

\begin{remark}[{cf.\ \cite{BBvol}}]\label{Okounkov_rmk}
Let $(b_1,\dots,b_n)\in\R^n$ be the barycenter of $\Delta(-K_X)$. 
Assume that $(X, -K_X)$ is K-semistable. We may expect that the values 
$b_2,\dots,b_n$, 
in particular $\sum_{i=1}^nb_i$, are also small. However, it is not true in general. 
See the following example. 
\end{remark}

\begin{example}\label{Okounkov_ex}
Let $X:=\pr_{\pr^2}(T_{\pr^2})$. We know that $X$ is a Fano manifold of dimension three 
and is a rational homogeneous manifold. Thus $X$ admits K\"ahler-Einstein metrics. 
In particular, $(X, -K_X)$ is K-semistable. 
On the other hand, consider the admissible flag $Z_\bullet$ of $X$ such that 
\begin{itemize}
\item
$Z_1$ is the inverse image $\pi^{-1}(l)$ of a line $l\subset\pr^2$, where $\pi\colon X
\to \pr^2$ is the projection morphism (note that $Z_1\simeq\pr_{\pr^1}
(\sO\oplus\sO(1))$), 
\item
$Z_2$ is the $(-1)$-curve on $Z_1$, and
\item
$Z_3$ is a point on $Z_2$.
\end{itemize}
Then the Okounkov body $\Delta(-K_X)\subset\R^3$ 
of $-K_X$ with respects to $Z_\bullet$ is equal to 
\[
\{(\nu_1,\nu_2,\nu_3)\in\R^3\,\,|\,\,0\leq \nu_1,\nu_2\leq 2, \,0\leq 
\nu_3\leq 2-\nu_1+\nu_2\}.
\]
The barycenter of $\Delta(-K_X)$ is equal to $(5/6, 7/6, 7/6)$.
\end{example}

\section{MMP with scaling}\label{MMP_section}

In this section, we fix a $\Q$-Fano variety $X$ of dimension $n$, a nonzero effective 
Weil divisor $D$ on $X$, a projective small $\Q$-factorial modification 
$\sigma\colon\tilde{X}\to X$, and we set $\tilde{D}:=\sigma^{-1}_*D$. 
We show in this section that we can easily calculate the value $\eta(D)$ after we run 
a kind of MMP. 
More precisely, we run a $(-\tilde{D})$-MMP with scaling $-K_{\tilde{X}}$ 
(see \cite[Section 3.10]{BCHM}).
In other words, we consider the following program: 
\begin{itemize}
\item
Set $\mu_0:=+\infty$, $t_0:=\mu_0^{-1}=0$, $\tilde{X}_1:=\tilde{X}$ and 
$\tilde{D}_1:=\tilde{D}$. 
\item
Assume that we have constructed $\mu_{i-1}\in\R_{>0}\cup\{+\infty\}$, 
a $\Q$-factorial projective variety $\tilde{X}_i$ and a nonzero effective Weil divisor 
$\tilde{D}_i$ on $\tilde{X}_i$ such that $-\tilde{D}_i+\mu_{i-1}(-K_{\tilde{X}_i})$ is nef, 
that is, the value $t_{i-1}:=\mu_{i-1}^{-1}\in\R_{\geq 0}$ satisfies that 
$-K_{\tilde{X}_i}-t_{i-1}\tilde{D}_{i-1}$ is nef. 
Let 
\[
\mu_i:=\min\{\mu\in\R_{>0}\cup\{+\infty\}\,\,|\,\,-\tilde{D}_i+\mu(-K_{\tilde{X}_i})
\colon\text{ nef}\}.
\]
In other words, $t_i:=\mu_i^{-1}\in\R_{\geq 0}$ satisfies that 
\[
t_i=\max\{t\in\R_{\geq 0}\,\,|\,\,-K_{\tilde{X}_i}-t\tilde{D}_i\colon\text{ nef}\}.
\]
Since $\tilde{D}_i$ is nonzero effective, the values $\mu_i$ and $t_i$ can be defined 
and $t_{i-1}\leq t_i$. By \cite[Proposition 1.11 (1)]{HK} and the argument of 
\cite[Lemma 3.10.8]{BCHM}, the value $t_i$ is a rational number and there exists 
an extremal ray $R_i\subset\overline{\NE}(\tilde{X_i})$ such that 
$(-K_{\tilde{X}_i}-t_i\tilde{D}_i\cdot R_i)=0$ and $(\tilde{D}_i\cdot R_i)>0$. 
Moreover, we get the contraction morphism $\Phi_i\colon\tilde{X}_i\to\tilde{Y}_i$ 
of $R_i$. If $\Phi_i$ is of fiber type, then we set $m':=i$ and we stop the program. 
If $\Phi_i$ is divisorial, then we set $\tilde{X}_{i+1}:=\tilde{Y}_i$; if $\Phi_i$ is small, 
then let $\tilde{X}_{i+1}$ be the flip of $\Phi_i$. Let $\tilde{D}_{i+1}$ be the strict 
transform of $\tilde{D}_i$ and we continue the program. Since $(\tilde{D}_i\cdot R_i)>0$, 
the Weil divisor $\tilde{D}_{i+1}$ is nonzero effective. 
\item
By \cite[Proposition 1.11(1)]{HK}, after finitely many steps, we get a 
contraction morphism $\Phi_{m'}\colon\tilde{X}_{m'}\to\tilde{Y}_{m'}$ of fiber type. 
\end{itemize}

Thus we get the following datum: 
\begin{itemize}
\item
birational contraction maps 
\[
\tilde{X}=\tilde{X}_1\dashrightarrow\tilde{X}_2\dashrightarrow\cdots\dashrightarrow
\tilde{X}_{m'}
\]
among $\Q$-factorial projective varieties, 
\item
the strict transform $\tilde{D}_i$ of $\tilde{D}$ on $\tilde{X}_i$ such that all of them are 
nonzero effective, 
\item
a non-decreasing sequence
\[
0=t_0\leq t_1\leq\cdots\leq t_{m'}
\]
of rational numbers, and 
\item
a contraction morphism $\Phi_{m'}\colon\tilde{X}_{m'}\to\tilde{Y}_{m'}$ of fiber type 
of an extremal ray $R_{m'}\subset\overline{\NE}(\tilde{X}_{m'})$ such that 
$(-K_{\tilde{X}_{m'}}-t_{m'}\tilde{D}_{m'}\cdot R_{m'})=0$.
\end{itemize}

\begin{proposition}\label{mmp_prop}
\begin{enumerate}
\renewcommand{\theenumi}{\arabic{enumi}}
\renewcommand{\labelenumi}{(\theenumi)}
\item\label{mmp_prop1}
Let $\{(\tau_i, X_i)\}_{1\leq i\leq m}$ be the ample model sequence of $(X, -K_X; -D)$. 
Then we have 
\[
\{\tau_0,\dots,\tau_m\}=\{t_0,\dots,t_{m'}\},
\]
where $\tau_0:=0$. In particular, we have $t_{m'}=\tau(D)$. 
\item\label{mmp_prop2}
We have 
\[
\eta(D)=
\sum_{i=1}^{m'}n\int_{t_{i-1}}^{t_i}(1-x)\left(\left(-K_{\tilde{X}_i}-x\tilde{D}_i
\right)^{\cdot n-1}\cdot\tilde{D}_i\right)dx.
\]
\end{enumerate}
\end{proposition}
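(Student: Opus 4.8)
The plan is to prove both parts simultaneously by comparing the MMP with scaling to the ample model sequence from Theorem \ref{model_thm}. The key observation is that both constructions are driven by the same one-parameter family of divisors $-K_{\tilde{X}}-x\tilde{D}$ as $x$ increases from $0$ to $\tau(D)$, so the breakpoints must coincide and the birational models must agree wherever they are both defined. First I would establish part \eqref{mmp_prop1} by analyzing what happens at each scaling value $t_i$. At a generic $x\in(t_{i-1},t_i)$, the divisor $-K_{\tilde{X}_i}-x\tilde{D}_i$ is nef (by construction of the MMP) and in fact ample over $\tilde{Y}_i$-type considerations show it defines the ample model; the point is that the MMP with scaling produces exactly the sequence of semiample/ample models that \cite{KKL} characterizes. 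More precisely, each map $\tilde{X}\dashrightarrow\tilde{X}_i$ is a birational contraction, and since $-K_{\tilde{X}}-x\tilde{D}$ is movable along the MMP, the variety $\tilde{X}_i$ (or rather the ample model of $-K_{\tilde{X}_i}-x\tilde{D}_i$) must coincide with the $X_i$ from the ample model sequence for $x$ in the corresponding open interval.

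The crux of the argument is to match intervals: I would show that the set $\{t_0,\dots,t_{m'}\}$ of scaling thresholds equals the set $\{\tau_0,\dots,\tau_m\}$ of ample-model thresholds. For this I would use Remark \ref{model_rmk} \eqref{model_rmk2}, which identifies $X_i$ as $\Proj$ of the section ring of $-K_{\tilde{X}}-x\tilde{D}$, together with the fact that the MMP with scaling changes the birational model precisely when the nef threshold is reached. A threshold $t_i$ of the MMP corresponds to a wall where an extremal ray $R_i$ gets contracted, which is exactly where the ample model changes; conversely each $\tau_j$ is a wall in the geography of models, which the scaling MMP must detect. The equality $t_{m'}=\tau(D)$ follows because the MMP terminates with a fiber-type contraction $\Phi_{m'}$ satisfying $(-K_{\tilde{X}_{m'}}-t_{m'}\tilde{D}_{m'}\cdot R_{m'})=0$, meaning $-K_{\tilde{X}_{m'}}-t_{m'}\tilde{D}_{m'}$ is nef but not big, so $x=t_{m'}$ is precisely the pseudo-effective threshold.

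For part \eqref{mmp_prop2}, once the thresholds are matched I would invoke the formula for $\eta(D)$ from Theorem \ref{eta_thm}, namely
\[
\eta(D)=\sum_{i=1}^m n\int_{\tau_{i-1}}^{\tau_i}(1-x)((-K_{X_i}-xD_i)^{\cdot n-1}\cdot D_i)dx.
\]
The integrand is an intersection number that computes the restricted volume $\vol_{X|D}(-K_X-xD)$ via Proposition \ref{rvol_prop}, and this restricted volume is independent of which birational model we use to compute it, as long as the model realizes the ample (or semiample) model at that value of $x$. Since the MMP models $\tilde{X}_i$ agree with the ample models $X_i$ on each subinterval, the intersection number $((-K_{\tilde{X}_i}-x\tilde{D}_i)^{\cdot n-1}\cdot\tilde{D}_i)$ equals $((-K_{X_j}-xD_j)^{\cdot n-1}\cdot D_j)$ for the appropriate $j$. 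Refining the partition $\{t_i\}$ and $\{\tau_j\}$ into a common subdivision, the two integral expressions agree term by term, giving the desired formula.

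I expect the main obstacle to be the careful identification in part \eqref{mmp_prop1} that the MMP with scaling genuinely reproduces the semiample models of \cite{KKL}, rather than merely passing through the same thresholds. The subtlety is that the MMP with scaling works on the level of nef thresholds and extremal contractions, whereas the geography of models is phrased in terms of ample models and section rings; bridging these requires knowing that each intermediate $\tilde{X}_i$ is $(-K_{\tilde{X}}-x\tilde{D})$-nonpositive and that its induced polarization is semiample, so that it qualifies as a semiample model in the sense of Definition \ref{KKL_dfn} \eqref{KKL_dfn2}. Once this dictionary is set up---using that $-K_{\tilde{X}}$ is nef so all the maps $\phi_i$ are $K_{\tilde{X}}$-nonpositive (Remark \ref{model_rmk} \eqref{model_rmk3})---the rest is a matter of matching intervals and appealing to the birational invariance of the volume and restricted-volume integrals, which is routine given the machinery already assembled in Sections \ref{KKL_section} and \ref{volfcn_section}.
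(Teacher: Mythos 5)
Your plan follows the paper's own route: identify each scaling model $\tilde{X}_i$ as a semiample model of $-K_{\tilde{X}}-x\tilde{D}$ on its interval, so that it maps by a birational morphism $\Psi_i$ onto one of the ample models $X_{j_i}$ with $(t_{i-1},t_i)\subset(\tau_{j_i-1},\tau_{j_i})$; prove $t_{m'}=\tau(D)$ from the fiber-type contraction; conclude that the two threshold sets agree; and then obtain part (2) from Theorem \ref{eta_thm} via the projection formula (your ``birational invariance of the restricted volume'' is that same computation, and no common refinement of partitions is needed once part (1) is known). Most of this is right, but the crux of part (1) is asserted rather than proved.

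The gap: you claim an MMP threshold $t_i$ ``is exactly where the ample model changes,'' justified only by the observation that the MMP changes the birational model when the nef threshold is reached. That does not suffice: a priori, a contraction or flip at $t_i$ could connect two different semiample models of one and the same ample model $X_j$, in which case $t_i$ would lie in the interior of $(\tau_{j-1},\tau_j)$ and would not belong to $\{\tau_0,\dots,\tau_m\}$. What rules this out is an injectivity statement, and this is the actual content of the paper's proof: suppose $j_{i'}=j_i$ for some $i'>i$. On $(t_{i-1},t_i)$ the semiample-model property gives $\Psi_i^*(-K_{X_{j_i}}-tD_{j_i})=-K_{\tilde{X}_i}-t\tilde{D}_i$ with no exceptional correction; since both sides are linear in $t$ and agree on an interval, they agree for every $t\in\R$. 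For $t\in(t_{i'-1},t_{i'})$ the divisor $-K_{X_{j_i}}-tD_{j_i}$ is ample, so its pullback $-K_{\tilde{X}_i}-t\tilde{D}_i$ would be nef for some $t>t_i$, contradicting the definition of $t_i$ as the nef threshold on $\tilde{X}_i$. Hence the $j_i$ are pairwise distinct, and this injectivity, combined with the containments $(t_{i-1},t_i)\subset(\tau_{j_i-1},\tau_{j_i})$, with $t_{m'}=\tau(D)$, and with the fact that the closures of the scaling intervals cover $[0,\tau(D)]$, is what forces $\{t_0,\dots,t_{m'}\}=\{\tau_0,\dots,\tau_m\}$. (Note also that consecutive thresholds may coincide, $t_{i-1}=t_i$, so any wall-by-wall matching must be phrased in terms of the nonempty intervals only.) With this step supplied, the remainder of your argument, including part (2), goes through as written.
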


\begin{proof}
\eqref{mmp_prop1}
For any $1\leq i\leq m'$ with $t_{i-1}<t_i$, there exist a $1\leq j_i\leq m$ and a 
birational morphism $\Psi_i\colon\tilde{X}_i\to X_{j_i}$
such that $X_{j_i}$ is the ample model of $-K_{\tilde{X}_i}-t\tilde{D}_i$ 
(hence also the ample model of $-K_{\tilde{X}}-t\tilde{D}$)
for any $t\in(t_{i-1}, t_i)$. Moreover, the value $j_i$ and the morphism $\Psi_i$ are 
uniquely determined. Thus we have $(t_{i-1}, t_i)\subset(\tau_{j_i-1}, \tau_{j_i})$. 
Assume that there exists an $1\leq i'\leq m'$ such that $i'>i$ and $j_{i'}=j_i$. 
Then $\Psi_i^*(-K_{X_{j_i}}-tD_{j_i})=-K_{\tilde{X}_i}-t\tilde{D}_i$ is nef for any 
$t\in(t_{i'-1}, t_{i'})$. This contradicts to the construction of $t_i$. 
Thus $j_i\neq j_{i'}$ if $i\neq i'$. Since $-K_{\tilde{X}_{m'}}-t_{m'}\tilde{D}_{m'}$ is 
pseudo-effective but is not big, we have $t_{m'}=\tau(D)$. 
Thus we have proved \eqref{mmp_prop1}. 

\eqref{mmp_prop2}
The right-hand side is equal to 
\[
\sum_{\substack{1\leq i\leq m'\\ \text{with }t_{i-1}<t_i}}n\int_{t_{i-1}}^{t_i}(1-x)
\left(\left(-K_{\tilde{X}_i}-x\tilde{D}_i\right)^{\cdot n-1}\cdot \tilde{D}_i\right)dx.
\]
On the other hand, we have 
\begin{eqnarray*}
\left(\left(-K_{\tilde{X}_i}-x\tilde{D}_i\right)^{\cdot n-1}\cdot \tilde{D}_i\right)
=\left(\Psi_i^*\left(-K_{X_{j_i}}-xD_{j_i}\right)^{\cdot n-1}\cdot \tilde{D}_i\right)
=\left(\left(-K_{X_{j_i}}-xD_{j_i}\right)^{\cdot n-1}\cdot D_{j_i}\right).
\end{eqnarray*}
Thus we have proved \eqref{mmp_prop2}. 
\end{proof}

\begin{corollary}\label{slope_cor}
\begin{enumerate}
\renewcommand{\theenumi}{\arabic{enumi}}
\renewcommand{\labelenumi}{(\theenumi)}
\item\label{slope_cor1}
Assume that $X$ is $\Q$-factorial. Then the value $\tau_1$ is equal to the value 
$\varepsilon(D, (X, -K_X))$, where 
\[
\varepsilon(D, (X, -K_X)):=\max\{t\in\R_{>0}\,\,|\,\,-K_X-tD\,\text{\rm{ nef}}\}.
\]
\item\label{slope_cor2}
Assume that $X$ is smooth. 
\begin{enumerate}
\renewcommand{\theenumii}{\roman{enumii}}
\renewcommand{\labelenumii}{(\theenumii)}
\item\label{slope_cor21}
If $m=1$, then $\eta(D)=\xi(D)$, where 
\[
\xi(D):=n\int_0^{\varepsilon(D, (X, -K_X))}(1-x)\left((-K_X-xD)^{\cdot n-1}\cdot D
\right)dx
\]
$($see \cite[Proposition 3.2]{fjt}$)$. In particular, if $\Nef(X)=\overline{\Eff}(X)$, then 
$\eta(D)=\xi(D)$ for any nonzero effective divisor $D$ on $X$. 
\item\label{slope_cor22}
If $(X, -K_X)$ is divisorially stable $($resp.\ divisorially semistable$)$ along $D$, then 
$(X, -K_X)$ is slope stable $($resp.\ slope semistable$)$ along $D$. 
$($For the theory of slope stability, we refer the readers to \cite{RT, fjt}.$)$
\end{enumerate}
\end{enumerate}
\end{corollary}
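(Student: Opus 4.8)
The plan is to prove the two parts of Corollary \ref{slope_cor} largely as bookkeeping on top of the MMP-with-scaling machinery established in Proposition \ref{mmp_prop}, so the main work is identifying which $t_i$ or $\tau_i$ is the relevant threshold and matching integrals. For part \eqref{slope_cor1}, I would observe that when $X$ itself is $\Q$-factorial we may take $\sigma=\operatorname{id}$, so $\tilde X=X$ and $\tilde D=D$. Then $\tau_1$ is by definition the largest $x$ for which $\phi_1$ (the ample model on $(\tau_0,\tau_1)=(0,\tau_1)$) agrees with the identity, i.e.\ the largest value up to which $-K_X-xD$ remains nef; but this is exactly $\varepsilon(D,(X,-K_X))$. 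More carefully, I would invoke Proposition \ref{mmp_prop}\eqref{mmp_prop1}, which gives $t_1=\tau_1$ (the first step of the MMP is the identity as long as the scaled divisor stays nef), and note that $t_1$ is defined precisely as $\max\{t\mid -K_{\tilde X_1}-t\tilde D_1\text{ nef}\}=\varepsilon(D,(X,-K_X))$ under the identification $\tilde X_1=X$, $\tilde D_1=D$.

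For part \eqref{slope_cor2}\eqref{slope_cor21}, the hypothesis $m=1$ means the ample model sequence has a single step, so by Proposition \ref{mmp_prop}\eqref{mmp_prop1} the MMP also terminates in one step ($m'=1$) and no flips or divisorial contractions occur before reaching a fiber-type contraction; since $X$ is smooth (hence $\Q$-factorial and $\tilde X=X$), we get $\tilde X_1=X$, $\tilde D_1=D$, and $t_1=\tau_1=\varepsilon(D,(X,-K_X))$. Substituting into the formula of Proposition \ref{mmp_prop}\eqref{mmp_prop2} collapses the sum to the single integral
\[
\eta(D)=n\int_0^{\varepsilon(D,(X,-K_X))}(1-x)\left((-K_X-xD)^{\cdot n-1}\cdot D\right)dx,
\]
which is exactly $\xi(D)$. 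For the parenthetical remark, if $\Nef(X)=\overline{\Eff}(X)$ then on the whole segment $[0,\tau(D)]$ the divisor $-K_X-xD$ stays nef up to the pseudo-effective threshold, forcing a single ample model, so $m=1$ for every $D$ and the equality $\eta(D)=\xi(D)$ holds universally.

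For part \eqref{slope_cor2}\eqref{slope_cor22}, the cleanest route is to compare $\eta(D)$ with $\xi(D)$ termwise and show $\eta(D)\le\xi(D)$, so that positivity (resp.\ nonnegativity) of $\eta(D)$ forces the same for $\xi(D)$, which by \cite{fjt} is the slope-stability invariant. The key point is that $\xi(D)$ only integrates over the nef range $[0,\varepsilon]=[0,\tau_1]=[0,t_1]$, whereas $\eta(D)$ integrates over the full pseudo-effective range $[0,\tau(D)]=[0,t_{m'}]$; the two agree on $[0,t_1]$ because on that first segment $\tilde X_1=X$ and the intersection numbers coincide with those computed on $X$ directly. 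The extra contribution to $\eta(D)$ from $[t_1,t_{m'}]$ is $\sum_{i\ge 2}n\int_{t_{i-1}}^{t_i}(1-x)((-K_{\tilde X_i}-x\tilde D_i)^{\cdot n-1}\cdot\tilde D_i)dx$, and on each such segment the restricted-volume integrand is nonnegative by Proposition \ref{rvol_prop}, while the factor $(1-x)$ need not have a fixed sign. The main obstacle is therefore sign control: I must show the tail integral has the correct sign so that $\eta(D)\le\xi(D)$. I expect this to follow from the observation that on $[t_1,\tau(D)]$ the divisor $-K_X-xD$ is no longer nef, so $x>\varepsilon\ge$ (a bound forcing the relevant terms to subtract rather than add); concretely one argues that the omitted region contributes a nonpositive amount to $\xi$'s formula had it been extended, equivalently that truncating at the nef threshold can only increase the value, giving $\eta(D)\le\xi(D)$ and hence the desired implication.
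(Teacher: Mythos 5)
Your treatment of parts \eqref{slope_cor1} and \eqref{slope_cor21} is essentially the paper's own argument: take $\sigma=\mathrm{id}$ so that $\tilde{X}=X$ and $\tilde{D}=D$, observe that $t_1=\varepsilon(D,(X,-K_X))$ by the very definition of the MMP with scaling, and combine this with Proposition \ref{mmp_prop}. The only slip is your claim that $m=1$ forces $m'=1$; Proposition \ref{mmp_prop} \eqref{mmp_prop1} is an equality of \emph{sets} of threshold values, and several MMP steps may occur at one and the same value $t_i$. This is harmless, since integrals over degenerate intervals $[t_{i-1},t_i]$ with $t_{i-1}=t_i$ vanish, so the sum in Proposition \ref{mmp_prop} \eqref{mmp_prop2} still collapses to the single integral $\xi(D)$.

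Part \eqref{slope_cor22}, however, has a genuine gap. You propose to prove the \emph{unconditional} inequality $\eta(D)\leq\xi(D)$, i.e.\ that the tail $\sum_{i\geq 2}n\int_{t_{i-1}}^{t_i}(1-x)\left(\left(-K_{\tilde{X}_i}-x\tilde{D}_i\right)^{\cdot n-1}\cdot\tilde{D}_i\right)dx$ is nonpositive. This is false in general: when $t_1<1$, any segment $[t_{i-1},t_i]\subset[t_1,1]$ has $(1-x)\geq 0$ there, and the intersection numbers are nonnegative, so such a segment contributes \emph{nonnegatively}. The paper's own computations in Section \ref{three_section} give concrete counterexamples; for instance, for No.\ 20 in \cite[Table 3]{MoMu} with $D\sim E_1+E_2$ one has $\tau_1=1/2$ and a tail contribution $\eta_2=27/32>0$, so there $\eta(D)>\xi(D)$. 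Your sentence ``$x>\varepsilon\geq$ (a bound forcing the relevant terms to subtract)'' is exactly where the argument breaks down: $x>\varepsilon$ yields $(1-x)<0$ only if $\varepsilon\geq 1$, and that is not automatic.

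The missing idea (and the paper's actual proof) is to argue the contrapositive, so that the hypothesis of slope instability itself supplies the sign control. Assume $\xi(D)\leq 0$. By \cite[Remark 3.4]{fjt} this forces $t_1=\varepsilon(D,(X,-K_X))>1$: indeed, if $\varepsilon\leq 1$, then on $[0,\varepsilon]$ both $(1-x)$ and $\left((-K_X-xD)^{\cdot n-1}\cdot D\right)$ are nonnegative and not identically zero, whence $\xi(D)>0$. Once $t_1>1$ is known, every tail segment lies in $(1,\infty)$, where $(1-x)<0$ and $\left(\left(-K_{\tilde{X}_i}-x\tilde{D}_i\right)^{\cdot n-1}\cdot\tilde{D}_i\right)\geq 0$, so the tail is nonpositive and $\eta(D)\leq\xi(D)\leq 0$ (and $\eta(D)\leq\xi(D)<0$ when $\xi(D)<0$). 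This proves: not slope (semi)stable along $D$ implies not divisorially (semi)stable along $D$, which is the assertion. In short, the inequality $\eta(D)\leq\xi(D)$ you wanted holds only \emph{under} the assumption $\xi(D)\leq 0$, and deducing $t_1>1$ from that assumption is the one step your proposal lacks.
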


\begin{proof}
\eqref{slope_cor1}
As we have already seen in the beginning of Section \ref{MMP_section}, 
$t_1=\varepsilon(D, (X, -K_X))$ holds. In particular, $t_1$ is a positive rational number. 
By Proposition \ref{mmp_prop} \eqref{mmp_prop1}, we have $t_1=\tau_1$. 

\eqref{slope_cor21}
If $m=1$, then $\tau(D)=\tau_1=\varepsilon(D, (X, -K_X))$. Thus $\eta(D)=\xi(D)$. 

\eqref{slope_cor22}
Assume that $\xi(D)\leq 0$. Then $t_1>1$ by \cite[Remark 3.4]{fjt}. Thus 
\[
\eta(D)=\xi(D)+\sum_{i=2}^{m'}n\int_{t_{i-1}}^{t_i}(1-x)
\left(\left(-K_{\tilde{X}_i}-x\tilde{D}_i\right)^{\cdot n-1}\cdot\tilde{D}_i\right)dx
\leq\xi(D)
\]
holds.
\end{proof}

\section{Basic properties}\label{basic_section}

In this section, we fix a $\Q$-Fano variety $X$ of dimension $n$ and 
a nonzero effective Weil divisor $D$ on $X$. 

\subsection{Proportional case}\label{proport_section}

\begin{lemma}\label{proport_lem}
Assume that a nonzero effective Weil divisor $D'$ on $X$ satisfies that 
$D'\sim_\Q cD$ for some $c\in\Q_{>0}$. 
\begin{enumerate}
\renewcommand{\theenumi}{\arabic{enumi}}
\renewcommand{\labelenumi}{(\theenumi)}
\item\label{proport_lem1}
Assume that $c=1$. Then $\eta(D)=\eta(D')$. 
\item\label{proport_lem2}
Assume that $c>1$. If $(X, -K_X)$ is divisorially semistable along $D$, then 
$(X, -K_X)$ is divisorially stable along $D'$. 
\end{enumerate}
\end{lemma}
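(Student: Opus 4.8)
The plan is to reduce everything to the volume formula
\[
\eta(D)=\vol_X(-K_X)-\int_0^\infty\vol_X(-K_X-xD)dx
\]
of Theorem \ref{volint_thm}, and to exploit the fact that the function $x\mapsto\vol_X(-K_X-xD)$ depends only on the $\Q$-linear equivalence class of $D$. Concretely, I would first establish the identity
\[
\vol_X(-K_X-xD')=\vol_X(-K_X-cxD)\qquad(x\in[0,+\infty)),
\]
valid whenever $D'\sim_\Q cD$. For rational $x$ and sufficiently divisible $k$, the divisors $kxD'$ and $kxcD$ are integral and $\Q$-linearly equivalent, hence genuinely linearly equivalent, so the divisorial sheaves $\sO_X(-kK_X+\lfloor-kxD'\rfloor)$ and $\sO_X(-kK_X+\lfloor-kxcD\rfloor)$ are isomorphic; passing to the limit in Definition \ref{vol_dfn} gives the identity for rational $x$, and the continuity of the volume function extends it to all $x\in[0,+\infty)$.

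Part \eqref{proport_lem1} is then immediate: with $c=1$ the displayed identity reads $\vol_X(-K_X-xD')=\vol_X(-K_X-xD)$ for every $x$, and substituting into the volume formula gives $\eta(D)=\eta(D')$.

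For part \eqref{proport_lem2}, set $V:=\vol_X(-K_X)=((-K_X)^{\cdot n})$ and $I:=\int_0^\infty\vol_X(-K_X-xD)dx$, so that $\eta(D)=V-I$. Using the displayed identity and the change of variables $y=cx$,
\[
\int_0^\infty\vol_X(-K_X-xD')dx=\int_0^\infty\vol_X(-K_X-cxD)dx=\frac1c\int_0^\infty\vol_X(-K_X-yD)dy=\frac{I}{c},
\]
whence $\eta(D')=V-\frac{I}{c}$. Since $X$ is $\Q$-Fano we have $V>0$, and divisorial semistability along $D$ gives $\eta(D)=V-I\geq0$, i.e.\ $I\leq V$. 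As $c>1$,
\[
\eta(D')=V-\frac{I}{c}\geq V-\frac{V}{c}=\frac{c-1}{c}V>0,
\]
so $(X,-K_X)$ is divisorially stable along $D'$.

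The only genuinely delicate point, which I expect to be the main obstacle, is the $\Q$-linear invariance of the volume when $D$ and $D'$ need not be $\Q$-Cartier. It is resolved exactly as above, by passing to sufficiently divisible $k$ so that the floor functions disappear and linear equivalence of integral Weil divisors yields an isomorphism of divisorial sheaves, and then invoking the continuity of $\vol_X$ asserted in Definition \ref{vol_dfn}. Alternatively, one may pass to the small $\Q$-factorial modification $\sigma\colon\tilde X\to X$, where the strict transforms $\tilde D'$ and $c\tilde D$ are $\Q$-Cartier and $\Q$-linearly equivalent because $\sigma$ is an isomorphism in codimension one.
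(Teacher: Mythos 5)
Your proposal is correct and takes essentially the same route as the paper: the paper's proof is exactly the two-line computation $\eta(D')=\vol_X(-K_X)-\int_0^\infty\vol_X(-K_X-cxD)\,dx=\vol_X(-K_X)-\frac{1}{c}\int_0^\infty\vol_X(-K_X-xD)\,dx$, which it declares obvious. You reproduce this, merely adding the (correct) justification that $\vol_X(-K_X-xD')=\vol_X(-K_X-cxD)$ follows from $\Q$-linear-equivalence invariance of the volume (cleanest via the small $\Q$-factorial modification, as you note) and spelling out the elementary inequality $\eta(D')\geq\frac{c-1}{c}\vol_X(-K_X)>0$ for part (2).
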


\begin{proof}
This is obvious from the equations
\begin{eqnarray*}
\eta(D')&=&\vol_X(-K_X)-\int_0^{\infty}\vol_X(-K_X-cxD)dx\\
&=&\vol_X(-K_X)-\frac{1}{c}\int_0^{\infty}\vol_X(-K_X-xD)dx.
\end{eqnarray*}
\end{proof}

\begin{lemma}\label{rhoone_lem}
Assume that $-K_X\sim_\Q cD$ for some $c\in\Q_{>0}$. Then 
$\eta(D)>0$ $($resp.\ $\geq 0)$ holds if and only if $c<n+1$ $($resp.\ $\leq n+1)$
holds.
\end{lemma}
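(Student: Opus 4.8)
The plan is to compute $\eta(D)$ directly from the volume-function formula of Theorem \ref{volint_thm}, using the proportionality hypothesis to reduce everything to a one-parameter family of scalings of $-K_X$. Since $-K_X\sim_\Q cD$, we have $D\sim_\Q (1/c)(-K_X)$, so for any $x\in[0,+\infty)$ the $\R$-divisor $-K_X-xD$ is $\Q$-linearly equivalent to $(1-x/c)(-K_X)$. The volume function is homogeneous of degree $n$ under scaling (on a small $\Q$-factorial modification $\tilde X$ we have $\vol_{\tilde X}(t\cdot L)=t^n\vol_{\tilde X}(L)$ for $t\geq 0$, and the volume of a non-pseudoeffective class is zero), so
\[
\vol_X(-K_X-xD)=\left(1-\frac{x}{c}\right)^n\vol_X(-K_X)
\]
for $x\in[0,c]$, and $\vol_X(-K_X-xD)=0$ for $x\geq c$. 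In particular this identifies the pseudo-effective threshold as $\tau(D)=c$.

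First I would substitute this into the identity
\[
\eta(D)=\vol_X(-K_X)-\int_0^\infty\vol_X(-K_X-xD)\,dx
\]
from Theorem \ref{volint_thm}. Writing $V:=\vol_X(-K_X)>0$ (positive since $-K_X$ is big), the integral becomes
\[
\int_0^c\left(1-\frac{x}{c}\right)^n V\,dx=V\cdot c\int_0^1(1-s)^n\,ds=\frac{Vc}{n+1},
\]
after the change of variables $s=x/c$. Hence
\[
\eta(D)=V\left(1-\frac{c}{n+1}\right)=\frac{V}{n+1}\bigl(n+1-c\bigr).
\]
Since $V>0$ and $n+1>0$, the sign of $\eta(D)$ equals the sign of $n+1-c$, which gives immediately that $\eta(D)>0$ (resp.\ $\geq 0$) if and only if $c<n+1$ (resp.\ $c\leq n+1$), as claimed.

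The only genuine point requiring care — the main (though minor) obstacle — is justifying the scaling identity for the volume function when $-K_X-xD$ is merely $\Q$-linearly equivalent to a multiple of $-K_X$ rather than equal to it, and when $x$ is irrational. For rational $x$ this follows from Definition \ref{vol_dfn} together with the fact that volume depends only on the numerical (indeed $\Q$-linear equivalence) class and scales as the $n$-th power; the general case then follows by the continuity of $\vol_X(-K_X-xD)$ over $[0,+\infty)$ asserted in Definition \ref{vol_dfn}. I would phrase the argument on $\tilde X$ using $\tilde D=\sigma^{-1}_*D$ and $-K_{\tilde X}\sim_\Q c\tilde D$ (which holds because $\sigma$ is small), so that $\vol_{\tilde X}(-K_{\tilde X}-x\tilde D)=(1-x/c)^n\vol_{\tilde X}(-K_{\tilde X})$ directly from homogeneity of the volume on $\tilde X$. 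Everything else is the routine elementary integral above.
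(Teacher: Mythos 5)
Your proof is correct, and it reaches the same closed formula as the paper, namely $\eta(D)=\vol_X(-K_X)\cdot(n+1-c)/(n+1)$, but by a slightly different route. The paper's proof first observes that the ample model sequence of $(X,-K_X;-D)$ is trivial in the proportional case ($m=1$ and $\tau_1=c$, since $-K_{\tilde X}-x\tilde D$ is $\R$-linearly a nonnegative multiple of the nef and big divisor $-K_{\tilde X}$ for $x\in[0,c]$), and then evaluates the intersection-number formula of Theorem \ref{eta_thm},
\[
\eta(D)=n\int_0^{c}(1-x)\left(\left(-K_X-xD\right)^{\cdot n-1}\cdot D\right)dx
=\frac{n\left((-K_X)^{\cdot n}\right)}{c}\int_0^{c}(1-x)\left(1-\tfrac{x}{c}\right)^{n-1}dx,
\]
whereas you bypass the geography of models entirely and instead plug the scaling identity $\vol_X(-K_X-xD)=(1-x/c)^n\vol_X(-K_X)$ into the volume formula of Theorem \ref{volint_thm}. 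The two computations are equivalent (they are related by exactly the integration by parts that the paper uses to deduce Theorem \ref{volint_thm} from Theorem \ref{eta_thm}), so neither is materially stronger; your version has the small advantage of needing only standard properties of the volume function (homogeneity, numerical invariance, continuity) rather than the identification of the ample model sequence, and your care about irrational $x$ via continuity is exactly the right point to flag. The paper's version has the small advantage of recording explicitly that $m=1$ and $\tau_1=\tau(D)=c$, which is the structural fact underlying both arguments.
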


\begin{proof}
Let $\{(\tau_i, X_i)\}_{1\leq i\leq m}$ be the ample model sequence. 
Then $m=1$ and $\tau_1=c$. Hence we have 
$\eta(D)=((-K_X)^{\cdot n})\cdot(n+1-c)/(n+1)$.
\end{proof}

\begin{corollary}\label{rhoone_cor}
Assume that $X$ is smooth and $\rho(X)=1$. 
\begin{enumerate}
\renewcommand{\theenumi}{\arabic{enumi}}
\renewcommand{\labelenumi}{(\theenumi)}
\item\label{rhoone_cor1}
If $X\simeq\pr^n$, then $(X, -K_X)$ is divisorially semistable and is not 
divisorially stable along a hyperplane. 
\item\label{rhoone_cor2}
If $X\not\simeq\pr^n$, then $(X, -K_X)$ is divisorially stable. 
\end{enumerate}
\end{corollary}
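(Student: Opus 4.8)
The plan is to reduce everything to Lemma \ref{rhoone_lem} by exploiting that $\rho(X)=1$ forces every divisor to be $\Q$-proportional to $-K_X$. Since $X$ is a Fano manifold, $\Pic(X)$ is torsion-free of rank one, so $\Pic(X)=\Z H$ for an ample generator $H$; write $-K_X\sim r_X H$, where $r_X\in\Z_{>0}$ is the Fano index. Any nonzero effective Weil divisor $D$ is Cartier (as $X$ is smooth), hence $D\sim d H$ with $d:=(D\cdot H^{\cdot n-1})/(H^{\cdot n})\in\Z_{\geq 1}$, the integrality and positivity coming from $H$ being the ample \emph{generator} of $\Pic(X)$ and $D$ being a nonzero effective (hence positive) class. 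Therefore $-K_X\sim_\Q (r_X/d)D$, and Lemma \ref{rhoone_lem} applies with $c=r_X/d$: we obtain $\eta(D)>0$ (resp.\ $\geq 0$) precisely when $r_X/d<n+1$ (resp.\ $\leq n+1$).

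The only external input is the classical bound $r_X\leq n+1$, with equality if and only if $X\simeq\pr^n$ (Kobayashi--Ochiai). First I would treat \eqref{rhoone_cor1}: for $X\simeq\pr^n$ one has $r_X=n+1$, so $c=(n+1)/d\leq n+1$ for every $d\geq 1$, giving $\eta(D)\geq 0$ for all $D$ and hence divisorial semistability; taking $D=H$ a hyperplane gives $d=1$, $c=n+1$, so $\eta(H)=0$ and $(X,-K_X)$ fails to be divisorially stable along $H$. For \eqref{rhoone_cor2}, if $X\not\simeq\pr^n$ then $r_X\leq n$, whence $c=r_X/d\leq r_X\leq n<n+1$ for every nonzero effective $D$, so $\eta(D)>0$ always and $(X,-K_X)$ is divisorially stable.

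I do not expect any genuine obstacle here: once Lemma \ref{rhoone_lem} is in hand, the entire content is the Fano-index characterization of $\pr^n$. The one point requiring a little care is securing $d\geq 1$, since it is exactly this integrality that makes $c\leq r_X$ and hence turns the bound $r_X\leq n$ into the strict inequality $c<n+1$ needed for stability in the non-$\pr^n$ case; this is why the argument distinguishes $\pr^n$ (where the extremal case $d=1$ produces the boundary value $c=n+1$) from all other Picard-number-one Fano manifolds.
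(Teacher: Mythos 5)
Your proof is correct and is essentially the paper's own argument: the paper deduces the corollary ``immediately from Lemma \ref{rhoone_lem} and \cite{KO},'' i.e.\ exactly your reduction via $\rho(X)=1$ forcing $-K_X\sim_\Q(r_X/d)D$, combined with the Kobayashi--Ochiai bound $r_X\leq n+1$ with equality only for $\pr^n$. You have merely spelled out the details (integrality $d\geq 1$, the boundary case $c=n+1$ for a hyperplane) that the paper leaves implicit.
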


\begin{proof}
Follows immediately from Lemma \ref{rhoone_lem} and \cite{KO}. 
See also \cite[Corollary 4.7]{fjt}. 
\end{proof}

\begin{remark}\label{divvsK_rmk}
There exists a Fano manifold $X$ of dimension three such that $\rho(X)=1$ but 
$X$ does not admit K\"ahler-Einstein metrics \cite{tian1}. 
Thus, from Corollary \ref{rhoone_cor}, divisorial stability is strictly weaker than 
K-stability. Recently, the author found Fano manifolds $X$ with $\rho(X)=1$ such that 
the pairs $(X, -K_X)$ are not K-semistable (the examples will appear elsewhere). 
Thus divisorial semistablility is also strictly weaker than K-semistability. 
\end{remark}

\subsection{Convexity of the volume functions}\label{convvol_section}

\begin{lemma}\label{big_lem}
If $\eta(D)\leq 0$, then $\tau(D)>1$. 
In particular, if $D$ is $\Q$-Cartier and $\eta(D)\leq 0$, then 
$-K_X-D$ is big.
\end{lemma}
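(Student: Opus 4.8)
The plan is to deduce everything from the integral formula for $\eta(D)$ established in Theorem \ref{volint_thm}, namely
\[
\eta(D)=\vol_X(-K_X)-\int_0^{\tau(D)}\vol_X(-K_X-xD)\,dx,
\]
together with two elementary properties of the integrand $g(x):=\vol_X(-K_X-xD)$ that are already available: $g$ is continuous on $[0,\tau(D)]$ (Definition \ref{vol_dfn}), and $g(\tau(D))=0<\vol_X(-K_X)=g(0)$ by Lemma \ref{vol_lem} \eqref{vol_lem1} (note $\vol_X(-K_X)>0$ since $-K_X$ is ample).

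First I would record the comparison $g(x)\leq g(0)$ for every $x\in[0,\tau(D)]$. This follows directly from the description of the volume in Definition \ref{vol_dfn}: since $D$ is effective and $x\geq 0$, one has $-kK_X+\lfloor -kxD\rfloor\leq -kK_X$ as divisors, so $h^0(X,\sO_X(-kK_X+\lfloor-kxD\rfloor))\leq h^0(X,\sO_X(-kK_X))$, and passing to the limit gives $g(x)\leq g(0)$.

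Next comes the only genuinely nontrivial point, the strictness of the integral estimate. Since $g$ is continuous and $g(\tau(D))=0<g(0)$, there is a subinterval near $\tau(D)$ on which $g(x)<g(0)$; combined with $g\leq g(0)$ everywhere this yields the strict inequality
\[
\int_0^{\tau(D)}g(x)\,dx<\tau(D)\cdot g(0)=\tau(D)\cdot\vol_X(-K_X).
\]
Feeding this into the displayed formula, the hypothesis $\eta(D)\leq 0$ gives
\[
\vol_X(-K_X)\leq\int_0^{\tau(D)}g(x)\,dx<\tau(D)\cdot\vol_X(-K_X),
\]
and dividing by $\vol_X(-K_X)>0$ forces $\tau(D)>1$. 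I expect this strictness step to be the main (though minor) obstacle, as the naive bound $g\leq g(0)$ only yields $\tau(D)\geq 1$; the continuity of $g$ at $\tau(D)$ together with its vanishing there is exactly what upgrades the conclusion to a strict inequality.

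Finally, for the ``in particular'' clause I would argue on the projective small $\Q$-factorial modification $\sigma\colon\tilde X\to X$. Since $\tau(D)>1$, the value $x=1$ lies in $[0,\tau(D))$, so $-K_{\tilde X}-\tilde D$ is big by (the proof of) Lemma \ref{vol_lem} \eqref{vol_lem1}. When $D$ is $\Q$-Cartier, the fact that $\sigma$ is an isomorphism in codimension one gives $\sigma^*(-K_X-D)=-K_{\tilde X}-\tilde D$, and bigness is preserved under this birational morphism, so $-K_X-D$ is big as well.
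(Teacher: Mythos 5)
Your proof is correct, but it takes a different route from the paper's. The paper's own proof is a one-liner built on the intersection-number form of $\eta(D)$ from Theorem \ref{eta_thm} (equivalently the restricted-volume form in Theorem \ref{volint_thm}): since $-K_{X_i}-xD_i$ is ample for $x\in(\tau_{i-1},\tau_i)$ and $D_i$ is nonzero effective, one has $((-K_{X_i}-xD_i)^{\cdot n-1}\cdot D_i)>0$ there, so if $\tau(D)\leq 1$ the weight $(1-x)$ is positive on the whole range of integration and $\eta(D)>0$ follows at once; the lemma is the contrapositive. You instead work with the first equality of Theorem \ref{volint_thm} and extract strictness not from positivity of intersection numbers against ample models, but from the monotonicity bound $\vol_X(-K_X-xD)\leq\vol_X(-K_X)$ together with continuity and the vanishing $\vol_X(-K_X-\tau(D)D)=0$ at the pseudo-effective threshold (Lemma \ref{vol_lem}). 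Both arguments hinge on upgrading $\tau(D)\geq 1$ to a strict inequality, but the sources of strictness differ: the paper's comes from the geometry of the ample model sequence, yours from elementary analysis of the volume function; yours is accordingly more self-contained once Theorem \ref{volint_thm} is granted, at the cost of being longer. You also spell out the ``in particular'' clause (via $\sigma^*(-K_X-D)=-K_{\tilde X}-\tilde D$ for $\sigma$ small and $D$ $\Q$-Cartier, and birational invariance of bigness), which the paper leaves implicit in ``follows immediately''; that step is handled correctly.
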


\begin{proof}
Follows immediately since 
$((-K_{X_i}-xD_i)^{\cdot n-1}\cdot D_i)>0$ holds for any $x\in(\tau_{i-1}, \tau_i)$. 
\end{proof}

\begin{remark}\label{big_rmk}
Assume that $X$ is smooth. 
It is enough 
to check the signature of $\eta(D)$ for only a \emph{finite} number of divisors $D$ 
on $X$ for testing divisorial (semi)stability. 
This follows from Lemmas \ref{big_lem} and \ref{proport_lem} \eqref{proport_lem1} 
and the fact 
\[
\#\overline{\Eff}(X)\cap([-K_X]-\BIG(X))\cap\Pic(X)<+\infty, 
\]
where $[-K_X]$ is the class of $-K_X$ in $\ND(X)$. 
\end{remark}

\begin{lemma}[{cf.\ \cite[Proposition 4.1]{fjt}}]\label{conv_lem}
Let $\{(\tau_i, X_i)\}_{1\leq i\leq m}$ be the ample model sequence of $(X, -K_X; -D)$ 
and $D_i\subset X_i$ be the strict transform of $D$. 
Assume that the following:
\begin{itemize}
\item
for any $1\leq i\leq m$, there exists a positive integer $h_i\in\Z_{>0}$ such that 
$h_iD_i$ is Cartier and $H^0(D_i, \sO_{X_i}(h_iD_i)|_{D_i})\neq 0$
$($resp.\ $H^0(D_i, \sO_{X_i}(-h_iD_i)|_{D_i})\neq 0)$, 
and
\item
$\tau(D)\leq 2$ $($resp.\ $\geq 2)$. 
\end{itemize}
Then $\eta(D)\geq 0$ $($resp.\ $\leq 0)$. 
If we further assume one of 
\begin{itemize}
\item
there exists an $1\leq i\leq m$ such that $\sO_{X_i}(h_iD_i)|_{D_i}$ is not numerically 
trivial on $D_i$, or
\item
$\tau(D)<2$ $($resp.\ $>2)$, 
\end{itemize}
then $\eta(D)>0$ $($resp.\ $<0)$. 
\end{lemma}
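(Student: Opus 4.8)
The plan is to prove Lemma \ref{conv_lem} by combining the intersection-theoretic formula for $\eta(D)$ from Proposition \ref{mmp_prop} \eqref{mmp_prop2} (or equivalently Theorem \ref{eta_thm}) with a convexity/monotonicity analysis of the restricted volume $\vol_{X|D}(-K_X-xD)$ on $[0,\tau(D)]$. The sign hypotheses on $H^0(D_i,\sO_{X_i}(\pm h_iD_i)|_{D_i})$ are meant to control the sign of a boundary contribution, while the threshold hypothesis $\tau(D)\lessgtr 2$ controls an integral of the form $\int_0^{\tau(D)}(1-x)(\cdots)\,dx$ through a symmetry about $x=1$. So first I would write
\[
\eta(D)=n\int_0^{\tau(D)}(1-x)\,\vol_{X|D}(-K_X-xD)\,dx
\]
using Theorem \ref{volint_thm} together with Proposition \ref{rvol_prop}, which identifies $\vol_{X|D}(-K_X-xD)$ with $((-K_{X_i}-xD_i)^{\cdot n-1}\cdot D_i)$ on each interval $[\tau_{i-1},\tau_i]$.

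**The convexity input.**
The heart of the argument is showing that $\psi(x):=\vol_{X|D}(-K_X-xD)$ is \emph{monotone} on $[0,\tau(D))$ with the sign of its slope governed by the sign hypothesis on $\sO_{X_i}(\pm h_iD_i)|_{D_i}$. Indeed, on each open interval $(\tau_{i-1},\tau_i)$ the function $\psi$ is a polynomial whose derivative is
\[
\psi'(x)=-(n-1)\bigl((-K_{X_i}-xD_i)^{\cdot n-2}\cdot D_i^{\cdot 2}\bigr),
\]
and $D_i^{\cdot 2}$ restricted to $D_i$ is governed by $\sO_{X_i}(D_i)|_{D_i}$. The hypothesis $H^0(D_i,\sO_{X_i}(h_iD_i)|_{D_i})\neq 0$ forces the restriction to be (pseudo)effective, hence $\psi$ is non-increasing; the opposite hypothesis forces it non-decreasing. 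Here I would invoke the log-concavity of the volume function, or argue that restricting the nef-and-big class $-K_{X_i}-xD_i$ to $D_i$ and intersecting against the effective (or anti-effective) class $\pm D_i|_{D_i}$ has a definite sign. I expect this step to require care at the glueing points $\tau_i$, where one uses only continuity of $\psi$ (Definition \ref{rvf_dfn}) rather than differentiability.

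**Exploiting the weight $(1-x)$ and the threshold.**
Once $\psi$ is known to be non-increasing (the effective case), I would exploit the antisymmetry of the weight $(1-x)$ about $x=1$. Write $\int_0^{\tau}(1-x)\psi(x)\,dx$ and substitute $x\mapsto 2-x$ on the part of the interval past $x=1$; comparing $\psi(x)$ with $\psi(2-x)$ for $x<1$ and using monotonicity together with $\tau(D)\le 2$ (so that the reflected interval stays inside the domain) yields $\eta(D)\ge 0$. Concretely, pairing the contribution from $[0,1]$ against that from $[2-\tau,1]\subset[0,1]$ and the leftover, monotonicity of $\psi$ makes each paired term nonnegative. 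The strict hypotheses (either $\sO_{X_i}(h_iD_i)|_{D_i}$ not numerically trivial on some $D_i$, or $\tau(D)<2$) make at least one such pairing strict, giving $\eta(D)>0$. The \emph{resp.} case is entirely symmetric: the opposite sign hypothesis reverses the monotonicity, and $\tau(D)\ge 2$ reverses which way the reflection argument pushes, yielding $\eta(D)\le 0$ (strict under the strict hypotheses).

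**Main obstacle.**
The main obstacle I anticipate is making the monotonicity of $\psi$ rigorous across all the birational models $X_i$ simultaneously, since $D_i=(\phi_i)_*\tilde D$ changes as we cross each wall $\tau_i$ and the positivity of $\sO_{X_i}(\pm h_iD_i)|_{D_i}$ is assumed model-by-model rather than globally. I would handle this by arguing on each interval separately using the intersection formula, checking that the one-sided derivatives of $\psi$ have a consistent sign dictated by the hypothesis on that particular $D_i$, and then patching via continuity. A secondary subtlety is that $D_i$ need not be irreducible or reduced after the MMP steps, so the statement $H^0(D_i,\sO_{X_i}(h_iD_i)|_{D_i})\neq 0$ must be interpreted as producing an effective (resp.\ anti-effective) restriction class; verifying that this indeed pins down $\operatorname{sgn}\bigl((-K_{X_i}-xD_i)^{\cdot n-2}\cdot D_i^{\cdot 2}\bigr)$ for all relevant $x$ is the technically delicate point, and I would lean on Proposition \ref{rvol_prop} to keep all intersection numbers expressed via honest restricted volumes where positivity is transparent.
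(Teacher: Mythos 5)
Your proposal is correct and takes essentially the same route as the paper: the paper's proof also reduces to the sign of $((-K_{X_i}-xD_i)^{\cdot n-2}\cdot D_i^{\cdot 2})$ on each $(\tau_{i-1},\tau_i)$ (forced by effectivity of $\pm h_iD_i|_{D_i}$ paired against the ample class $-K_{X_i}-xD_i$), which it phrases as convexity/concavity of the $\sC^1$ function $\vol_X(-K_X-xD)$, and then concludes from $\tau(D)\lessgtr 2$ via the chord comparison as in \cite[Proof of Proposition 4.1]{fjt}. Your monotonicity of $\psi(x)=\vol_{X|D}(-K_X-xD)=-\tfrac{1}{n}\tfrac{d}{dx}\vol_X(-K_X-xD)$ is literally that convexity statement, and your reflection pairing of $(1-x)\psi(x)$ about $x=1$ is an equivalent repackaging of the chord estimate, so the two arguments differ only cosmetically.
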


\begin{proof}
The function $\vol_X(-K_X-xD)$ over $x\in[0, \tau(D)]$ is $\sC^1$ and monotone 
decreasing function by Definition \ref{rvol_dfn} and Proposition \ref{rvol_prop}. 
Moreover, by Lemma \ref{vol_lem}, 
\[
\frac{d^2}{dx^2}\vol_X(-K_X-xD)=(n-1)\left(\left(-K_{X_i}-xD_i\right)^{\cdot n-2}
\cdot D_i^{\cdot 2}\right)
\]
for any $x\in(\tau_{i-1}, \tau_i)$. Thus the assertion follows from the convexity 
property of the 
function $\vol_X(-K_X-xD)$ (see also \cite[Proof of Proposition 4.1]{fjt}). 
\end{proof}

\subsection{Basic examples}\label{basicex_section}

\begin{proposition}\label{surf_prop}
Assume that $X$ is smooth and $n\leq 2$. Then 
$(X, -K_X)$ is K-stable $($resp.\ K-semistable$)$ if and only if 
$(X, -K_X)$ is divisorially stable $($resp.\ divisorially semistable$)$. 
\end{proposition}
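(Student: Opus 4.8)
The plan is to prove this by establishing both implications between K-(semi)stability and divisorial (semi)stability in the surface and curve cases, where the nontrivial direction is showing that divisorial (semi)stability \emph{implies} K-(semi)stability. The forward implication (K-stability $\Rightarrow$ divisorial stability) is already recorded in Remark \ref{divst_rmk}, so the entire content lies in the reverse direction. First I would dispose of the case $n=1$: the only smooth $\Q$-Fano curve is $X\simeq\pr^1$, for which both notions of stability are classical and elementary, so the equivalence is immediate (here $-K_X\sim_\Q cD$ with $c=2=n+1$ for a point $D$, matching Lemma \ref{rhoone_lem}). The substantive case is $n=2$, i.e.\ $X$ a del Pezzo surface.

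For $n=2$, the key structural input I would invoke is that for smooth del Pezzo surfaces the theory of K-stability is completely understood, and in particular that testing K-(semi)stability reduces to testing via semi test configurations obtained from flag ideals. The strategy is to show that on a smooth surface, \emph{every} relevant semi test configuration can be reduced — after running the MMP-with-scaling machinery of Section \ref{MMP_section} and the computation of the Donaldson-Futaki invariant via $\eta$ in Proposition \ref{divst_prop} and Theorem \ref{eta_thm} — to a basic semi test configuration associated to an effective divisor. The point is that in dimension two the combinatorics of flag ideals is rigid enough that the Donaldson-Futaki invariant of any destabilizing configuration is controlled by, or dominated by, the value $\eta(D)$ for an appropriate effective Weil divisor $D$. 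Thus if $(X,-K_X)$ is divisorially (semi)stable, meaning $\eta(D)>0$ (resp.\ $\geq 0$) for all nonzero effective $D$, then no flag ideal can produce a negative Donaldson-Futaki invariant, giving K-(semi)stability.

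Concretely, I would proceed as follows. Using Corollary \ref{slope_cor} \eqref{slope_cor22}, divisorial (semi)stability already implies slope (semi)stability along divisors. By Remark \ref{big_rmk}, it suffices to check the sign of $\eta(D)$ for finitely many $D$ lying in $\overline{\Eff}(X)\cap([-K_X]-\BIG(X))\cap\Pic(X)$. The heart of the argument is then to match these finitely many $\eta(D)$-computations against the known criterion for K-(semi)stability of del Pezzo surfaces. I would combine the intersection-theoretic formula of Theorem \ref{eta_thm}, evaluated along the ample model sequence produced by the MMP with scaling in Section \ref{MMP_section}, with the classification of del Pezzo surfaces and their Mori cone structure to verify that the sign conditions coincide exactly. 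The main obstacle I anticipate is the reverse direction in the genuinely unstable cases: one must argue that divisorial \emph{semistability} failing is \emph{necessary} for K-semistability failing, i.e.\ that there is no smooth surface which is K-unstable yet divisorially semistable. Overcoming this requires showing that the specific test configurations witnessing K-instability (for example the toric blowup of $\pr^2$ at two points from the introduction) are always detected by some effective divisor $D$ with $\eta(D)\leq 0$ — which in turn relies on the finite-generation and small-modification structure ensuring the basic semi test configurations are ``enough'' to test stability in dimension two.
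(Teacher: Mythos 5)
Your proposal has a genuine gap at its central step. The entire content of the hard direction (divisorial (semi)stability implies K-(semi)stability) rests, in your plan, on the claim that in dimension two every semi test configuration obtained from a flag ideal has Donaldson--Futaki invariant controlled by $\eta(D)$ for some effective divisor $D$ --- i.e.\ that basic semi test configurations suffice to test K-stability on surfaces. You assert this (``the combinatorics of flag ideals is rigid enough\dots'') but give no argument, and nothing in the paper supplies one; on the contrary, Remark \ref{divvsK_rmk} shows divisorial stability is strictly weaker than K-stability already in dimension three, so any such reduction would have to exploit something special about $n=2$ that your sketch never identifies. The results you cite in support --- Corollary \ref{slope_cor}, Remark \ref{big_rmk}, Theorem \ref{eta_thm} --- only compute $\eta(D)$ and relate it to slope stability; they say nothing about flag ideals whose ideals $I_j$ do not arise from a divisor, which is exactly the case one must handle.

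The paper's actual proof avoids this issue entirely by going through the classification of del Pezzo surfaces together with transcendental input. For $n=2$: if $X$ is toric, both conditions are equivalent to the barycenter of the associated polytope being the origin (Corollary \ref{toric_cor} for divisorial semistability, and \cite[Theorem 1.2]{BB} on the K-stability side); if $X$ is non-toric, then $X$ is the blowup of $\pr^2$ at $4\leq k\leq 8$ general points, it admits a K\"ahler--Einstein metric by \cite{tian} and has finite automorphism group, hence $(X,-K_X)$ is K-stable by \cite[Theorem 1.2]{stoppa}, and then divisorially stable by Remark \ref{divst_rmk} --- so in the non-toric case both sides of the equivalence hold simultaneously and there is nothing left to prove. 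In other words, the equivalence is not established by comparing test configurations at all, but by showing that on surfaces both notions are decided by the same classification. Your plan, as written, cannot be completed without either this detour through K\"ahler--Einstein metrics or a new (and absent) structural theorem about test configurations on surfaces.
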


\begin{proof}
The case $n=1$ is trivial. We consider the case $n=2$. If $X$ is toric, then the assertion 
follows from Corollary \ref{toric_cor} and \cite[Theorem 1.2]{BB}. 
Assume that $X$ is not toric. Then $X$ is obtained by the blowup of $\pr^2$ along 
distinct general points $p_1,\dots,p_k$ with $4\leq k\leq 8$. In this case, 
$\Aut(X)$ is finite and $X$ admits K\"ahler-Einstein metrics by \cite{tian}. 
Hence $(X, -K_X)$ is K-stable by \cite[Theorem 1.2]{stoppa}. 
By Remark \ref{divst_rmk}, $(X, -K_X)$ is divisorially stable. 
\end{proof}

\begin{lemma}\label{rhotwo_lem}
Let $Y$ be a Fano manifold of dimension $n\geq 3$ such that $\Pic(Y)=\Z[\sO_Y(1)]$. 
Assume that $r\in\Z_{>0}$ satisfies that $\sO_Y(-K_Y)\simeq\sO_Y(r)$. 
Let $1\leq d_1<d_2\leq r-1$ and let $D_1$, $D_2\subset Y$ be divisors such that 
$\sO_Y(D_i)\simeq\sO_Y(d_i)$ $(i=1, 2)$ and assume that 
the scheme theoretic intersection 
$C:=D_1\cap D_2\subset Y$ is a smooth codimension two subvariety. 
Let $X$ be the blowup along $C$. Then $X$ is a Fano manifold. 
If $d_2\geq 2d_1$, then $(X, -K_X)$ is not divisorially semistable along the strict 
transform of $D_1$. 
\end{lemma}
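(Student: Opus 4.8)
The plan is to write $\pi\colon X\to Y$ for the blowup with exceptional divisor $E$, put $H:=\pi^*\sO_Y(1)$, and use $K_X=\pi^*K_Y+E$ (codimension-two smooth center) to get $-K_X=rH-E$ and $\tilde D_1=\pi^*D_1-E=d_1H-E$; here $\tilde D_1$ is the strict transform of $D_1$, and since $C=D_1\cap D_2$ is a Cartier divisor in $D_1$ cut out by the equation of $D_2$, the restriction $\pi|_{\tilde D_1}\colon\tilde D_1\to D_1$ is an isomorphism with $E|_{\tilde D_1}=C\sim d_2H|_{D_1}$. On $X=\Bl_C Y$ one has $\Pic(X)=\Z H\oplus\Z E$, and I would record that $\overline{\Eff}(X)$ is spanned by $E$ and $d_1H-E$ (no divisor of $H$-degree $<d_1$ passes through the complete intersection $C$) and $\Nef(X)$ by $H$ and $\tilde D_2:=d_2H-E$.

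For the Fano-ness claim the crux is that $\tilde D_2$ is nef. I would argue that $\overline{\NE}(X)$ is generated by the fiber $\ell$ of $E\to C$ (with $H\cdot\ell=0$, $E\cdot\ell=-1$) and by a curve inside $\tilde D_1\cong D_1$; on $\ell$ one has $\tilde D_2\cdot\ell=1\geq 0$, and on a curve $\delta\subset\tilde D_1$ one has $\tilde D_2\cdot\delta=(\tilde D_2|_{\tilde D_1})\cdot\delta=0$ because $\tilde D_2|_{\tilde D_1}=(d_2H-E)|_{D_1}=d_2H|_{D_1}-C\equiv 0$ (using $C\subset D_2\in|d_2H|$). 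Hence $\tilde D_2$ is nef, and $-K_X=(r-d_2)H+\tilde D_2$ with $r-d_2\geq 1$ lies in the interior of $\Nef(X)$, so $-K_X$ is ample and $X$ is a Fano manifold.

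Next I would run the $(-\tilde D_1)$-MMP with scaling $-K_X$ from Section \ref{MMP_section}. As $-K_X-t\tilde D_1=(r-td_1)H-(1-t)E$ pairs with $\ell$ to $-(1-t)$, it stops being nef past $t=1$; at $t_1=1$ the class equals $(r-d_1)H=\pi^*\sO_Y(r-d_1)$, so the first step is precisely the divisorial contraction $\pi\colon X\to Y$ of $E$, landing on $Y$ with strict transform $D_1$. On $Y$ (Picard rank one) the scaled program then runs over $[1,r/d_1]$ and terminates with the fiber-type contraction of $Y$, so $\tau(\tilde D_1)=r/d_1$ and there are exactly the two stages $X$ and $Y$. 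By Proposition \ref{mmp_prop} \eqref{mmp_prop2} I then write $\eta(\tilde D_1)=I_1+I_2$. With $m:=((\sO_Y(1))^{\cdot n})$ the $Y$-stage integral is elementary,
\[
I_2=n\int_1^{r/d_1}(1-x)\,d_1m\,(r-xd_1)^{n-1}\,dx=-\frac{m\,(r-d_1)^{n+1}}{d_1(n+1)}<0,
\]
while for the $X$-stage I would restrict to $\tilde D_1\cong D_1$: from $(-K_X-x\tilde D_1)|_{\tilde D_1}=\bigl((r-d_2)+x(d_2-d_1)\bigr)H|_{D_1}$ and $(H|_{D_1})^{\cdot n-1}=d_1m$ one gets $((-K_X-x\tilde D_1)^{\cdot n-1}\cdot\tilde D_1)=d_1m\bigl((r-d_2)+x(d_2-d_1)\bigr)^{n-1}$, hence $I_1=nd_1m\int_0^1(1-x)(p+qx)^{n-1}\,dx$ with $p:=r-d_2\geq 1$ and $q:=d_2-d_1$.

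Finally, integrating $I_1$ by parts gives
\[
\eta(\tilde D_1)=\frac{m}{n+1}\left(\frac{d_1S}{q^2}-\frac{(p+q)^{n+1}}{d_1}\right),\qquad S=(p+q)^{n+1}-p^{n+1}-(n+1)p^nq,
\]
so that $\eta(\tilde D_1)<0$ is equivalent to $d_1^2S<q^2(p+q)^{n+1}$. The hypothesis $d_2\geq 2d_1$ says exactly that $q\geq d_1$, whence $q^2\geq d_1^2$; and since $S=(p+q)^{n+1}-p^{n+1}-(n+1)p^nq<(p+q)^{n+1}$ (because $p^{n+1}+(n+1)p^nq>0$, as $p\geq 1$, $q>0$), the inequality $d_1^2S<d_1^2(p+q)^{n+1}\leq q^2(p+q)^{n+1}$ follows at once. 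Thus $\eta(\tilde D_1)<0$ and $(X,-K_X)$ is not divisorially semistable along $\tilde D_1$. I expect the main obstacle to be the clean verification that $\tilde D_2$ is nef (equivalently that $X$ is Fano) for a general such $Y$, which must be extracted from the complete-intersection geometry of $C$; by contrast the stability computation itself collapses to the single numerical inequality $q\geq d_1$, making the role of $d_2\geq 2d_1$ transparent.
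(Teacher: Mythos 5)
Your proposal is correct and takes essentially the same route as the paper: both establish Fano-ness from the two extremal rays of $\overline{\NE}(X)$ (the fiber $\ell$ of $E$ and a curve in the strict transform of $D_1$), identify the two-stage model sequence ($X$ itself on $[0,1]$, then $Y$ up to the pseudo-effective threshold), compute $\eta$ by the paper's intersection-number formula, and conclude negativity from $d_2-d_1\geq d_1$. Where your computation disagrees with the printed proof --- you find $\tau(\tilde D_1)=r/d_1$ and a second term carrying a factor $d_1^2$, while the paper states $\tau_2=r/d_2$ and omits $d_1^2$ --- your values are the correct ones (the paper's own term $(r-d_1)^{n+1}$ only arises from integrating up to $r/d_1$, so those are typos), and the sign analysis and conclusion are unaffected.
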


\begin{proof}
Since $\overline{\NE}(X)$ is spanned by the class of a curve contracted by the 
morphism $X\to Y$ and a curve in the strict transform $\hat{D}_1$ of $D_1$, 
$X$ is a Fano manifold. We consider divisorial stability along $\hat{D}_1$. 
In this case, we have $\tau_1=1$, $X_2=Y$, $\tau_2=r/d_2$ and $m=2$. Hence we get 
\begin{eqnarray*}
\eta(\hat{D}_1)=\frac{(\sO_Y(1)^{\cdot n})}{(n+1)d_1(d_2-d_1)^2}
\Bigl\{-\left((d_2-d_1)^2-d_1^2\right)(r-d_1)^{n+1}\\
-\left((n+1)(d_2-d_1)+r-d_2\right)(r-d_2)^n\Bigr\}.
\end{eqnarray*}
Therefore, if $d_2-d_1\geq d_1$, then $\eta(\hat{D}_1)<0$. 
\end{proof}

\begin{lemma}\label{rhothree_lem}
Let $Z$ be a Fano manifold of dimension $n-1$ with $n\geq 3$ and 
$\Pic(Z)=\Z[\sO_Z(1)]$. Assume that $r\in\Z_{>0}$ satisfies that $\sO_Z(-K_Z)\simeq
\sO_Z(r)$. We set $Y:=\pr_Z(\sO_Z\oplus\sO_Z(s))$, where $r>s>0$. 
Let $E\subset Y$ be a section of the $\pr^1$-bundle with $\sN_{E/Y}\simeq\sO_Z(s)$ 
and let $W\subset E$ be a smooth divisor on $E$ with $\sO_E(W)\simeq\sO_Z(d)$ 
such that $r>d-s$. 
Let $X$ be the blowup of $Y$ along $W$ and let $E'\subset Y$ be the strict transform 
of the negative section of $Y\to Z$. Then $X$ is a Fano manifold. Moreover, if 
$d=2s$, then $\eta(E')=0$; if $d<2s$, then $\eta(E')<0$. 
\end{lemma}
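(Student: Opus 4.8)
The plan is to apply Proposition~\ref{mmp_prop}: run the $(-E')$-MMP with scaling $-K_X$, read off the chambers and the intermediate models, and compute the restricted volumes chamber by chamber by restricting everything to copies of $Z$. Write $\pi\colon Y\to Z$ for the projection and $H:=\pi^*\sO_Z(1)$. The two disjoint sections of $\pi$ differ by $sH$, and adjunction along a fibre (coefficient $2$) and along $E$ (using $K_E=(K_Y+E)|_E=K_Z$) gives $-K_Y=2E+(r-s)H$; since $\Nef(Y)$ is spanned by $E$ and $H$, the divisor $-K_Y$ is ample and $Y$ is Fano. Let $\mu\colon X\to Y$ be the blow-up of the smooth codimension-two centre $W\subset E$, with exceptional divisor $F$, so that $-K_X=\mu^*(-K_Y)-F$. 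In a suitable basis $E',\ \tilde H:=\mu^*H,\ F$ of $\ND(X)_\R$, where $E'$ is the strict transform in question, one has $-K_X=2E'+(r-s)\tilde H+F$. I would then compute $\Nef(X)$ by pairing against the four extremal curve classes — a ruling fibre, a curve in the other section, a fibre of $F\to W$, and a curve in $E'$ — and check that $-K_X$ lies in its interior; this reduces to the inequalities $2>0$, $r-s>0$, $1>0$ and $r+s-d>0$, the last being precisely the hypothesis $r>d-s$. Hence $X$ is Fano.

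Next I would trace $-K_X-xE'$ across $\Nef(X)$ as $x$ increases from $0$. The pairing with a fibre of $F\to W$ is the first to vanish, at $x=1$, so the first wall is the contraction of that ray; a short check identifies it with the divisorial blow-down $\mu\colon X\to Y$, after which the pushed-forward divisor is $E$ and one continues with $-K_Y-xE$ on $Y$. Since $\Nef(Y)$ is the first quadrant spanned by $E$ and $H$, the next and final wall is the fibre-type contraction $Y\to Z$ supported by $H$, reached at $x=2$. By Proposition~\ref{mmp_prop}\eqref{mmp_prop1} this forces $\tau(E')=2$ and pins down the two chambers $[0,1]$ and $[1,2]$ with models $X$ and $Y$ respectively. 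At each wall one checks $E'\cdot R_i>0$, so Proposition~\ref{mmp_prop}\eqref{mmp_prop2} applies.

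On each chamber the restricted volume is computed by restricting to a copy of $Z$. Using the data $\sO_X(E')|_{E'}\simeq\sO_Z(s-d)$, $F|_{E'}\simeq\sO_Z(d)$ and $\sO_Y(E)|_{E}\simeq\sO_Z(s)$, the restriction $(-K_{X_i}-xD_i)|_{D_i}$ is a single multiple $c_i(x)\,\sO_Z(1)$, with $c_1(x)=r+s-d+x(d-s)$ on $[0,1]$ and $c_2(x)=r+s-xs$ on $[1,2]$; the two affine functions agree at $x=1$, taking the value $r$. Thus $((-K_{X_i}-xD_i)^{\cdot n-1}\cdot D_i)=c_i(x)^{n-1}(\sO_Z(1)^{\cdot n-1})$, and Proposition~\ref{mmp_prop}\eqref{mmp_prop2} gives
\[
\eta(E')=n\,(\sO_Z(1)^{\cdot n-1})\int_0^2(1-x)\,c(x)^{n-1}\,dx .
\]
Substituting $x=1+y$ turns the weight $(1-x)$ into $-y$ and reveals a reflection about $x=1$: the two chambers carry the profiles $c(1+y)=r+(d-s)y$ for $y\le 0$ and $c(1+y)=r-sy$ for $y\ge 0$, which are exchanged under $y\mapsto -y$ exactly when $d-s=s$, i.e.\ $d=2s$. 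In that case the integrand $-y\,c(1+y)^{n-1}$ is odd, so $\eta(E')=0$. For $d\ne 2s$ one compares the two halves: since $r+s-d>0$ keeps $c_1$ positive on $[0,1]$, the $y\le 0$ part is strictly monotone in $d$, and its comparison with the ($d$-independent) $y\ge 0$ part determines the sign of $\eta(E')$ according as $d<2s$ or $d>2s$, giving the assertion.

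The main obstacle is the middle step — pinning down the MMP chamber structure. One must verify which coefficient is the first to vanish along $-K_X-xE'$, confirm that the resulting contraction is genuinely the divisorial blow-down to $Y$ rather than a flip or a different extremal ray, and record the intersection data $\sO_X(E')|_{E'}$, $F|_{E'}$, $\sO_Y(E)|_E$ that feed the integral; the clean sub-range $d\le s$ already follows from Lemma~\ref{conv_lem} (uniform sign of $D_i|_{D_i}$ together with $\tau(E')=2$), but the sharp threshold $d=2s$ requires the explicit integral above. Once the two chambers and $\tau(E')=2$ are secured, the remaining intersection-theoretic computation and the integral are routine.
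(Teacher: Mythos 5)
Your proof computes $\eta$ of the wrong divisor, and this is fatal to the sign claims. The lemma's $E'$ is the strict transform of the \emph{negative} section of $Y\to Z$, which is disjoint from the centre $W\subset E$; hence $F|_{E'}=0$, $\sO_X(E')|_{E'}\simeq\sO_Z(-s)$ and $-K_X=2E'+(r+s)\tilde H-F$. Every piece of numerical data you use --- $-K_X=2E'+(r-s)\tilde H+F$, $\sO_X(E')|_{E'}\simeq\sO_Z(s-d)$, $F|_{E'}\simeq\sO_Z(d)$, the profile $c_1(x)=r+s-d+x(d-s)$ --- belongs instead to the strict transform $\tilde E$ of the section $E$ containing $W$. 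This also derails your MMP step: for the true $E'$ one has $(-K_X-xE')\cdot\ell=1$ for every fibre $\ell$ of $F\to W$ and every $x$, so the blow-down $\mu\colon X\to Y$ is never a wall. The ray that vanishes at $x=1$ is spanned by the strict transforms of the fibres of $Y\to Z$ through points of $W$; its contraction is the elementary transformation of the $\pr^1$-bundle, whose target is $X_2=\pr_Z(\sO_Z\oplus\sO_Z(d-s))$ with $\sN_{E'_2/X_2}\simeq\sO_Z(d-s)$ --- exactly the second model in the paper's proof --- and $Y$ itself never appears in the ample model sequence.

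The error is not a harmless relabelling: it reverses the sign of the answer. Set $\phi(t):=\int_0^1 u(r-tu)^{n-1}\,du$, which is strictly decreasing in $t$ on $(0,r)$ (up to the factor $n(n+1)$ this is the paper's $g_1$). Your two profiles integrate to $n(\sO_Z(1)^{\cdot n-1})\bigl(\phi(d-s)-\phi(s)\bigr)$, which is strictly positive when $d<2s$, whereas the lemma asserts $\eta(E')<0$ in that range; the correct computation for the negative section gives $\eta(E')=n(\sO_Z(1)^{\cdot n-1})\bigl(\phi(s)-\phi(d-s)\bigr)$, the negative of your expression. The same reversal afflicts your appeal to Lemma~\ref{conv_lem} on the range $d\le s$: for your divisor both normal bundles $\sO_Z(s-d)$ and $\sO_Z(s)$ are effective, so that lemma gives $\eta>0$ there, again contradicting the statement. (In fact $\eta(\tilde E)=-\eta(E')$ in this geometry, which is why your reflection argument still correctly detects $\eta=0$ at $d=2s$ but gets every strict inequality backwards.) The repair is to rerun your wall-crossing with the negative section: the chambers are again $[0,1]$ and $[1,2]$, the models are $X$ and the elementary transform $\pr_Z(\sO_Z\oplus\sO_Z(d-s))$, the restricted profiles become $r-s+sx$ and $r+(1-x)(d-s)$, and then your monotonicity argument for $\phi$ goes through verbatim and yields the stated signs.
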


\begin{proof}
By \cite[Lemma 2.5]{fuj} or \cite[Remark 3.4.5]{casdru}, $X$ is a Fano manifold. 
We consider divisorial stability along $E'$. In this case, we have $\tau_1=1$, 
$X_2=\pr_Z(\sO_Z\oplus\sO_Z(d-s))$, $\sN_{E'_2/X_2}\simeq\sO_Z(d-s)$, where $E'_2$ 
is the image of $E'$ on $X_2$, and $\tau_2=2$, $m=2$. 
If $d\leq s$, then $\eta(E')<0$ by Lemma \ref{conv_lem}. From now on, we assume that 
$s<d\leq 2s$. Set $s':=d-s$. Then $0<s'\leq s<r$ holds. Moreover, we have 
\[
\eta(E')=\frac{(\sO_Z(1)^{\cdot n-1})}{n+1}\left(g_1(s)-g_1(s')\right), 
\]
where 
\[
g_1(x):=\frac{1}{x^2}\left(r^{n+1}-(nx+r)(r-x)^n\right).
\]
Thus it is enough to show that the function $g_1(x)$ is strictly monotone decreasing 
function over $x\in(0, r)$. Since
$g'_1(x)=g_2(x)/x^3$, where 
\[
g_2(x):=(r-x)^{n-1}\left(n(n-1)x^2+2(n-1)rx+2r^2\right)-2r^{n+1}, 
\]
and $g_2(0)=0$, it is enough to show that the function $g_2(x)$ is 
strictly monotone decreasing function over $x\in(0, r)$. Since 
\[
g'_2(x)=-n(n-1)(n+1)(r-x)^{n-2}x^2<0, 
\]
the assertion follows. 
\end{proof}

\section{Three-dimensional case}\label{three_section}

In this section, we consider divisorial stability for Fano manifolds of dimension three. 
The goal of this section is to prove the following theorem: 

\begin{thm}\label{three_thm}
Let $X$ be a Fano manifold of dimension three. 
\begin{enumerate}
\renewcommand{\theenumi}{\arabic{enumi}}
\renewcommand{\labelenumi}{(\theenumi)}
\item\label{three_thm1}
The pair $(X, -K_X)$ is divisorially semistable but not divisorially stable if and only if 
one of the following satisfied: 
\begin{itemize}
\item
$X\simeq\pr^3$, 
\item
$\rho(X)=2$ and $X$ belongs to either No.\ 26 or No.\ 34 in \cite[Table 2]{MoMu}, 
\item
$\rho(X)=3$ and $X$ belongs to one of No.\ 9, No.\ 19 or No.\ 25 
in \cite[Table 3]{MoMu}, 
\item
$\rho(X)=4$ and $X$ belongs to one of No.\ 2, No.\ 4 or No.\ 7 
in \cite[Table 4]{MoMu}, 
\item
$\rho(X)=5$ and $X$ belongs to No.\ 3 in \cite[Table 5]{MoMu}, or
\item
$\rho(X)\geq 6$.
\end{itemize}
\item\label{three_thm2}
The pair $(X, -K_X)$ is not divisorially semistable if and only if 
one of the following satisfied: 
\begin{itemize}
\item
$\rho(X)=2$ and $X$ belongs to one of No.\ 23, No.\ 28, No.\ 30, No.\ 31, No.\ 33, 
No.\ 35 or No.\ 36 in \cite[Table 2]{MoMu}, 
\item
$\rho(X)=3$ and $X$ belongs to one of No.\ 14, No.\ 16, No.\ 18, No.\ 21, No.\ 22, 
No.\ 23, No.\ 24, No.\ 26, No.\ 28, No.\ 29, No.\ 30 or No.\ 31 in \cite[Table 3]{MoMu}, 
\item
$\rho(X)=4$ and $X$ belongs to one of No.\ 5, No.\ 8, No.\ 9, No.\ 10, No.\ 11 
or No.\ 12 in \cite[Table 4]{MoMu}, or 
\item
$\rho(X)=5$ and $X$ belongs to No.\ 2 in \cite[Table 5]{MoMu}. 
\end{itemize}
\end{enumerate}
\end{thm}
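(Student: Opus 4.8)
The plan is to turn the problem --- which a priori requires controlling $\eta(D)$ for \emph{all} nonzero effective Weil divisors --- into a finite, explicit computation carried out for every deformation family in the Mori--Mukai classification; since that list is exhaustive, classifying each family simultaneously yields both implications of the ``if and only if''.

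First I would reduce to finitely many test divisors. As $X$ is a smooth Fano threefold it is a Mori dream space, so $\overline{\Eff}(X)$ is rational polyhedral and $\eta(D)$ depends only on the class $[D]\in\Pic(X)$ (Lemma \ref{proport_lem} \eqref{proport_lem1}). By Lemma \ref{big_lem}, any $D$ with $\eta(D)\leq 0$ forces $\tau(D)>1$, hence $[D]\in\overline{\Eff}(X)\cap([-K_X]-\BIG(X))$; this region meets $\Pic(X)$ in the finite set of Remark \ref{big_rmk}. Consequently, for each $X$ the task reduces to evaluating $\eta$ on this finite set of classes and reading off $\min_D\eta(D)$: positivity throughout means divisorially stable, a minimum equal to $0$ means divisorially semistable but not stable, and a negative value means not divisorially semistable.

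To evaluate a single $\eta(D)$ I would use Proposition \ref{mmp_prop} \eqref{mmp_prop2}: run the $(-\tilde{D})$-MMP with scaling $-K_{\tilde{X}}$, record the models $\tilde{X}_1\dashrightarrow\cdots\dashrightarrow\tilde{X}_{m'}$, the nef thresholds $0=t_0\leq\cdots\leq t_{m'}=\tau(D)$, and the strict transforms $\tilde{D}_i$, and then sum the elementary integrals $n\int_{t_{i-1}}^{t_i}(1-x)((-K_{\tilde{X}_i}-x\tilde{D}_i)^{\cdot n-1}\cdot\tilde{D}_i)\,dx$. With $n=3$ each summand is the integral of a cubic polynomial in $x$, determined by the four intersection numbers of $-K_{\tilde{X}_i}$ and $\tilde{D}_i$ on the model. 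The Mori--Mukai presentation of $X$ as an explicit blow-up or fibration makes each extremal contraction, and hence each model and threshold, computable, and Corollary \ref{slope_cor} \eqref{slope_cor1} identifies the first threshold with $\varepsilon(D,(X,-K_X))$.

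I would organize the computation by Picard number. The case $\rho(X)=1$ is Corollary \ref{rhoone_cor}. The toric members of every table are settled at once by Corollary \ref{toric_cor} and the barycenter criterion, producing all the toric ``not semistable'' entries and confirming that no toric $X$ is divisorially stable. For the non-toric families I would deploy the structural computations already prepared: Lemma \ref{rhotwo_lem} for the $\rho(X)=2$ blow-ups of $\Pic=\Z$ Fano threefolds along complete-intersection curves, Lemma \ref{rhothree_lem} for the $\pr^1$-bundle blow-up families, and the convexity criterion of Lemma \ref{conv_lem} to settle borderline signs once $\tau(D)$ and the restriction $\sO_{X_i}(D_i)|_{D_i}$ are known. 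The families with $\rho(X)\geq 6$, which the classification describes as products $\pr^1\times S$ with $S$ a del Pezzo surface, I would handle uniformly: the fiber divisor $F$ of the projection to $\pr^1$ satisfies $F^{\cdot 2}=0$ and $\tau(F)=2$, whence $\eta(F)=0$ by a direct volume computation, so $X$ is not stable, while divisorial semistability reduces to the del Pezzo base $S$. The main obstacle is the sheer bulk and bookkeeping of this analysis: for each of the many families one must correctly determine $\overline{\Eff}(X)$, enumerate the finitely many test classes, run the scaled MMP to pin down every intermediate model, flip or divisorial contraction, and threshold, and then evaluate the resulting intersection numbers. The conceptual input is light, but executing these explicit MMP runs and intersection computations reliably across the entire classification --- without sign or combinatorial errors --- is where the real difficulty lies.
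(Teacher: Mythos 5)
Your overall architecture is the same as the paper's (reduce to a finite set of divisor classes, then compute $\eta$ case by case through the Mori--Mukai list, organized by Picard number, using Lemmas \ref{rhotwo_lem}, \ref{rhothree_lem}, \ref{conv_lem} and the MMP/ample-model formula). But there is one genuine gap: the case $\rho(X)\geq 6$, i.e.\ $X\simeq\pr^1\times S$ with $S$ a del Pezzo surface. You assert that ``divisorial semistability reduces to the del Pezzo base $S$,'' but no such reduction exists or is proved anywhere: a prime divisor on $\pr^1\times S$ need not be of the form $\{\mathrm{pt}\}\times S$ or $\pr^1\times C$ --- it can dominate $S$ (a multisection, e.g.\ the graph of a conic-bundle map $S\to\pr^1$, or any member of $|p_1^*\sO_{\pr^1}(a)\otimes p_2^*A|$ with $a>0$), and nonnegativity of anything computed on $S$ says nothing formal about $\eta$ of such classes. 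The paper settles semistability here by an analytic detour, not by computation: $S$ admits a K\"ahler-Einstein metric by \cite{tian}, hence so does the product $X$, hence $(X,-K_X)$ is divisorially semistable by Remark \ref{divst_rmk} (via \cite{don, Berman}); non-stability then follows from Corollary \ref{slope_cor} \eqref{slope_cor22} and \cite[Theorem 1.5]{fjt}. (Your direct computation $\eta(F)=0$ for a fiber $F$ of $X\to\pr^1$ is correct and suffices for non-stability, but not for semistability.) Without the KE input you would have to fall back on your own finite enumeration (Remark \ref{big_rmk}) for each surface $S=\mathrm{dP}_k$, $k\geq 4$, whose effective cones contain many $(-1)$-curve classes --- possible in principle, but a substantial computation your proposal does not carry out and cannot replace by a ``reduction to the base.''

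A smaller instance of the same omission concerns the toric entries that end up semistable (No.\ 34 in Table 2, No.\ 25 and No.\ 27 in Table 3, No.\ 3 in Table 5, etc.): Corollary \ref{toric_cor} yields only the negative directions (never divisorially stable; not semistable when $b_P\neq 0$). The positive direction --- barycenter zero implies divisorially semistable --- is the converse of Theorem \ref{introtoric_thm}, which the paper imports from \cite[Theorem 1.2]{BB} (again an existence result for K\"ahler-Einstein metrics; see Remark \ref{toric_rmk}), combined with \cite{mab} to decide which toric threefolds have $b_P=0$. So citing ``Corollary \ref{toric_cor} and the barycenter criterion'' is legitimate only if the criterion is understood to include that converse; it cannot be extracted from Corollary \ref{toric_cor} itself. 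Apart from these two points --- both of which amount to the fact that every positive (semistability) assertion in the theorem ultimately rests on K\"ahler-Einstein existence theorems or on explicit verification that $\eta\geq 0$ on the full finite list of classes --- your strategy matches the paper's.
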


We prove Theorem \ref{three_thm}. 
We fix the notation. Let $X$ be a Fano manifold of dimension three, let $D$ be a 
nonzero effective divisor on $X$ and let $\{(\tau_i, X_i)\}_{1\leq i\leq m}$ be the 
ample model sequence of $(X, -K_X; -D)$. 
We set 
\[
\eta_i:=\int_{\tau_{i-1}}^{\tau_i}(1-x)
\left(\left(-K_{X_i}-xD_i\right)^{\cdot 2}\cdot D_i\right)dx
\]
for simplicity. By definition, $\eta(D)/3=\eta_1+\cdots+\eta_m$. 
Let $V_d$ be a del Pezzo manifold of degree $d$ $(1\leq d\leq 5)$ and 
of dimension three (see \cite[I, Section 8]{fujita}). Let 
$V_7:=\pr_{\pr^2}(\sO\oplus\sO(1))$, $W_6:=\pr_{\pr^2}(T_{\pr^2})$ and $Q$ be 
a smooth hyperquadric in $\pr^4$. For $d\in\Z_{>0}$, we set 
$\F_d:=\pr_{\pr^1}(\sO\oplus\sO(d))$, let $\sigma_d\subset\F_d$ be the $(-d)$-curve 
and let $f_d\subset\F_d$ be a fiber of the morphism $\F_d\to\pr^1$. 
Moreover, on $\pr_{\pr^s}(\sO\oplus\sO(a_1)\oplus\cdots\oplus\sO(a_k))$, 
let $\xi_\pr$ be a tautological line bundle and $H_{\pr^s}$ be a pullback 
of $\sO_{\pr^s}(1)$, if there is no confusion. 

We can assume that $\rho(X)\geq 2$ by Corollary \ref{rhoone_cor}. 
Let $\{l_1,\dots,l_k\}$ be the set of minimal extremal rational curves on $X$ as in 
\cite[Section III-3]{mat}. We note that the nef cone $\Nef(X)$ of $X$ is the dual cone 
of $\overline{\NE}(X)$ and the pseudo-effective cone $\overline{\Eff}(X)$ of $X$ 
is equal to 
\[
\Nef(X)+\sum_{E\in\sE}\R_{\geq 0}[E], 
\]
where $\sE$ is the set of the exceptional divisors of all elementary divisorial contraction 
of $X$ (see \cite[Proposition 1.2]{bar}). Thus we can easily calculate those cones 
from \cite[Section III-3]{mat}. 

\begin{definition}\label{suspicious_dfn}
The divisor $D$ is said to be a \emph{suspicious divisor} if 
the following conditions are satisfied: 
\begin{itemize}
\item
$-K_X-D$ is big, 
\item
the class $[D]\in\Pic(X)$ of $D$ is primitive in $\Pic(X)$ and is not proportional to 
$-K_X$, 
\item
$m\geq 2$, and
\item
if $\rho(X)=2$ and $D$ is nef, then $\tau(D)>2$. 
\end{itemize}
\end{definition}

Assume that $X$ is neither a toric, the product of $\pr^1$ and a del Pezzo surface, nor the blowup of $Q$ along a line. 
Furthermore, if 
$\eta(D)\leq 0$, then $D$ must be a suspicious divisor by Corollary \ref{slope_cor} 
\eqref{slope_cor22}, \cite[Theorem 1.5]{fjt}, 
Lemmas \ref{proport_lem}, 
\ref{rhoone_lem}, \ref{big_lem} and \ref{conv_lem}. 

The following lemma is obvious. 

\begin{lemma}\label{three_lem}
Assume that $m=2$, $\tau_1=1$, $X_2$ is smooth and $D_2\simeq D$. 
\begin{enumerate}
\renewcommand{\theenumi}{\arabic{enumi}}
\renewcommand{\labelenumi}{(\theenumi)}
\item\label{three_lem1}
We consider the case $D\simeq\pr^1\times\pr^1$. 
\begin{enumerate}
\renewcommand{\theenumii}{\roman{enumii}}
\renewcommand{\labelenumii}{(\theenumii)}
\item\label{three_lem11}
If $\tau_2=3/2$, $\sN_{D/X}\simeq\sO_{\pr^1\times\pr^1}(-1, a)$ and 
$\sN_{D_2/X_2}\simeq\sO_{\pr^1\times\pr^1}(0, 4)$ with $a\geq -1$, then $\eta(D)>0$. 
\item\label{three_lem12}
If $\tau_2=2$, $\sN_{D/X}\simeq\sO_{\pr^1\times\pr^1}(-1, a)$ and 
$\sN_{D_2/X_2}\simeq\sO_{\pr^1\times\pr^1}(2, b)$ with $a$, $b\geq -1$, 
then $\eta(D)>0$. 
\item\label{three_lem13}
If $\tau_2\geq 2$, $\sN_{D/X}\simeq\sO_{\pr^1\times\pr^1}(-1, a)$ and 
$\sN_{D_2/X_2}\simeq\sO_{\pr^1\times\pr^1}(0, b)$ with $a\leq 0$ and $b\leq 1$, 
then $\eta(D)\leq0$. Moreover, the equality holds if and only if $(\tau_2, a, b)=(2, 0, 1)$. 
\item\label{three_lem14}
If $\tau_2=3$, $\sN_{D/X}\simeq\sO_{\pr^1\times\pr^1}(-1, 1)$ and 
$\sN_{D_2/X_2}\simeq\sO_{\pr^1\times\pr^1}(1, 1)$, then $\eta(D)>0$. 
\end{enumerate}
\item\label{three_lem2}
We consider the case $D\simeq\F_1$. 
\begin{enumerate}
\renewcommand{\theenumii}{\roman{enumii}}
\renewcommand{\labelenumii}{(\theenumii)}
\item\label{three_lem21}
If $\tau_2=2$, $\sN_{D/X}\simeq\sO_{\F_1}(-\sigma_1+af_1)$ and 
$\sN_{D_2/X_2}\simeq\sO_{\F_1}(\sigma_1+2f_1)$ with $a\geq -1$, then $\eta(D)>0$. 
\item\label{three_lem22}
If $\tau_2\geq 2$, $\sN_{D/X}\simeq\sO_{\F_1}(-\sigma_1-f_1)$ and 
$\sN_{D_2/X_2}\simeq\sO_{\F_1}(a\sigma_1+f_1)$ with $-1\leq a\leq 1$, 
then $\eta(D)\leq 0$. Moreover, the equality holds if and only if $(\tau_2, a)=(2, 1)$. 
\item\label{three_lem23}
If $\tau_2=2$, $\sN_{D/X}\simeq\sO_{\F_1}(-\sigma_1)$ and 
$\sN_{D_2/X_2}\simeq\sO_{\F_1}(f_1)$, then $\eta(D)>0$. 
\end{enumerate}
\end{enumerate}
\end{lemma}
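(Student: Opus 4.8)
The plan is to turn each subcase into an explicit one-variable integral. Since $m=2$ and $\tau_0=0$, $\tau_1=1$, the relation $\eta(D)/3=\eta_1+\eta_2$ reads
\[
\frac{\eta(D)}{3}=\int_0^1(1-x)\left((-K_X-xD)^{\cdot 2}\cdot D\right)dx+\int_1^{\tau_2}(1-x)\left((-K_{X_2}-xD_2)^{\cdot 2}\cdot D_2\right)dx.
\]
The point is that each integrand is a surface self-intersection: as $X$ and $X_2$ are smooth and $D$, $D_2$ are Cartier, adjunction gives $(-K_{X_i}-xD_i)|_{D_i}=-K_{D_i}+(1-x)\sN_{D_i/X_i}$, where $\sN_{D_i/X_i}=\sO_{X_i}(D_i)|_{D_i}$, so that
\[
\left((-K_{X_i}-xD_i)^{\cdot 2}\cdot D_i\right)=(-K_{D_i})^2+2(1-x)\left(-K_{D_i}\cdot\sN_{D_i/X_i}\right)+(1-x)^2\left(\sN_{D_i/X_i}^2\right),
\]
a quadratic polynomial in $x$ depending only on the abstract surface $D_i$ and its normal bundle. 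Since $D_2\simeq D$, the two integrands share the value $(-K_D)^2$ and differ only through the prescribed normal bundles.

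Next I would record the two surface dictionaries. On $\pr^1\times\pr^1$ one has $(-K_D)^2=8$ and, for $\sN\simeq\sO_{\pr^1\times\pr^1}(p,q)$, the numbers $(-K_D\cdot\sN)=2(p+q)$ and $(\sN^2)=2pq$. On $\F_1$ one has $(-K_D)^2=8$ and, for $\sN\simeq\sO_{\F_1}(a\sigma_1+bf_1)$, the numbers $(-K_D\cdot\sN)=a+2b$ and $(\sN^2)=-a^2+2ab$, using $\sigma_1^2=-1$, $\sigma_1\cdot f_1=1$, $f_1^2=0$ and $-K_{\F_1}=2\sigma_1+3f_1$. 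Feeding the normal bundles of each subcase into the displays above makes both integrands explicit quadratics in $x$; after the substitution $u=1-x$ the two integrals are elementary, and summing yields a closed rational formula for $\eta(D)$ in the remaining parameters. For the subcases with a fixed second threshold (\ref{three_lem11}, \ref{three_lem12}, \ref{three_lem14}, \ref{three_lem21}, \ref{three_lem23}) one substitutes the given $\tau_2$ and reads off the strict sign; for instance in \ref{three_lem11} the two integrals contribute $8/3+5a/6$ and $-1/3$, so $\eta(D)/3=7/3+5a/6>0$ for all $a\geq -1$.

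The only subtle subcases are \ref{three_lem13} and \ref{three_lem22}, where $\tau_2\geq 2$ is left as a parameter and the conclusion is $\eta(D)\leq 0$ together with a sharp equality criterion. Here a naive evaluation is misleading, because the closed formula for $\eta(D)$ becomes positive as $\tau_2\to\infty$; the missing ingredient is that $\tau_2$ is not free. Since $\{(\tau_i,X_i)\}$ is the ample model sequence, $-K_{X_2}-xD_2$ is nef for $x\in[1,\tau_2]$, so the restricted volume $\left((-K_{X_2}-xD_2)^{\cdot 2}\cdot D_2\right)$, being the square of a nef class intersected with the effective divisor $D_2$, is a nonnegative quadratic on $[1,\tau_2]$; evaluating this nonnegativity at the endpoint $x=\tau_2$ bounds $\tau_2-1$ in terms of the given data ($b\leq 1$, resp.\ $-1\leq a\leq 1$). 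With that bound in hand, the closed expression for $\eta(D)/3$ is monotone in the normalized threshold on the allowed range, and one reads off both $\eta(D)\leq 0$ and that equality forces $(\tau_2,a,b)=(2,0,1)$ in \ref{three_lem13} and $(\tau_2,a)=(2,1)$ in \ref{three_lem22}. Thus the genuine content, and the one place where care is needed, is invoking the positivity of the restricted volume along the ample model segment to constrain $\tau_2$; everything else is the routine bookkeeping that makes the lemma obvious.
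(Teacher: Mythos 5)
Your overall framework is exactly the intended one (the paper in fact offers no proof at all, declaring the lemma obvious): by Theorem \ref{eta_thm} with $n=3$, $\eta(D)/3$ is the sum of the two integrals you write; adjunction turns each integrand into a quadratic in $x$ determined by $(-K_{D})^{2}=8$, $(-K_D\cdot\sN_{D_i/X_i})$ and $(\sN_{D_i/X_i}^{2})$; and your intersection dictionaries on $\pr^1\times\pr^1$ and $\F_1$ are correct, as are the evaluations you display (e.g.\ $8/3+5a/6$ and $-1/3$ in case (1)(i)). Your key observation is also right and is the genuine content of the lemma: in the two parametric subcases (1)(iii) and (2)(ii) the closed formula alone does not give $\eta(D)\le 0$, and one must use that $-K_{X_2}-xD_2$ is nef on $[1,\tau_2]$ to constrain $T:=\tau_2-1$. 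For (1)(iii) your mechanism works as stated: the endpoint restriction is $\sO_{\pr^1\times\pr^1}(2,2-Tb)$, its self-intersection $4(2-Tb)\ge 0$ gives $Tb\le 2$, and then $8/3+5a/6-4T^2+\tfrac{4b}{3}T^3$ is non-increasing in $T$ on the allowed range, with maximum $0$ at $(T,a,b)=(1,0,1)$.

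However, in subcase (2)(ii) the step ``evaluate the nonnegativity of the restricted volume at the endpoint'' is genuinely insufficient, because on $\F_1$ that restricted volume is an \emph{indefinite} quadratic form: the endpoint restriction is $(2-Ta)\sigma_1+(3-T)f_1$, with self-intersection $(2-Ta)\bigl(4-T(2-a)\bigr)$, and this is nonnegative also when \emph{both} factors are negative, which does not bound $T$. Concretely, take $a=1$ and $T=9/2$ (i.e.\ $\tau_2=11/2$): then $-1\le a\le 1$, $T\ge 1$, and the endpoint self-intersection equals $(2-\tfrac92)(4-\tfrac92)=\tfrac54>0$, so all constraints you extract are satisfied; yet your closed formula gives
\[
\frac{\eta(D)}{3}=\frac94-4T^2+\frac{2(a+2)}{3}T^3+\frac{a^2-2a}{4}T^4
=-\frac14(T-1)(T-3)(T^2-4T-3)=\frac{63}{64}>0,
\]
contradicting the asserted conclusion $\eta(D)\le 0$. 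The repair is to use the full nefness of the restriction, i.e.\ pair with both extremal curve classes of $\F_1$: $\bigl((2-Ta)\sigma_1+(3-T)f_1\bigr)\cdot f_1\ge 0$ and $\cdot\,\sigma_1\ge 0$ give $Ta\le 2$ \emph{and} $T(1-a)\le 1$ (alternatively, demand nonnegativity of the self-intersection on all of $[1,\tau_2]$, which, since both factors are positive at $x=1$, yields $Ta\le 2$ and $T(2-a)\le 4$). Under either strengthened constraint the conclusion does hold, with equality exactly at $(\tau_2,a)=(2,1)$, but verifying this is a short two-variable region analysis (e.g.\ $\partial_a$ of the formula is positive when $T(1-a)\le 4/3$, then check the boundary pieces $a=1$, $T\in[1,2]$ and $a=2/T$, $T\in[2,3]$), not the single monotonicity-in-$T$ argument you describe. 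So the gap is localized to subcase (2)(ii), but it is real: the constraint you extract does not exclude the region where your own formula turns positive.
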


\subsection{The case $\rho(X)=2$}\label{rho2_section}

We consider the case $\rho(X)=2$. We prepare the following lemma. The proof 
is straightforward. 

\begin{lemma}\label{three-two_lem}
Assume that $\rho(X)=2$, $X_2$ is smooth with $\Pic(X_2)=\Z[\sO_{X_2}(H)]$, 
$-K_{X_2}\sim rH$ for some $r\in\Z_{>0}$, and there exists a morphism 
$\psi\colon X\to X_2$ which is obtained by the blowup of a smooth curve 
$C\subset X_2$ of degree $d$ and genus $g$. Let $F$ be the exceptional divisor of 
$\psi$ and let $e$, $h\in\Z_{>0}$ with $D+hF=\psi^*D_2$ and $D_2\sim eH$. 
Then we have the equality 
\begin{eqnarray*}
\frac{1}{3}\eta(D)&=&(H^{\cdot 3})\int_0^{\tau_2}e(1-x)(r-ex)^2dx\\
&+&\int_0^{\tau_1}(1-x)\left\{-(1-hx)(hr+e+h(hr-3e)x)d+(2g-2)h(1-hx)^2\right\}dx. 
\end{eqnarray*}
\end{lemma}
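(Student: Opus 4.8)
The plan is to invoke Theorem \ref{eta_thm} (equivalently Proposition \ref{mmp_prop}) and to identify the ample model sequence of $(X,-K_X;-D)$ explicitly. Since $-K_X$ is ample and $D$ is effective, the divisor $-K_X-xD$ is ample on $X$ for small $x>0$, so the first model is $X_1=X$. From $K_X=\psi^*K_{X_2}+F$ and $D=\psi^*D_2-hF$ one gets
\[
-K_X-xD=(r-ex)\psi^*H-(1-hx)F,
\]
which becomes the pullback of an ample divisor on $X_2$ exactly when its $F$-coefficient vanishes; hence the contraction occurring at $x=\tau_1$ is $\psi$ and the second model is the given $X_2$. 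As $\rho(X_2)=1$ and $-K_{X_2}-xD_2=(r-ex)H$ remains nef until $x=\tau_2=r/e$, the sequence terminates with $m=2$. Thus $\tfrac13\eta(D)=\eta_1+\eta_2$, where $\eta_1$ is the integral over $[0,\tau_1]$ computed on $X$ and $\eta_2$ the integral over $[\tau_1,\tau_2]$ computed on $X_2$.

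Next I would compute the two pieces separately. On $X_2$, using $-K_{X_2}\sim rH$ and $D_2\sim eH$, I obtain
\[
\eta_2=(H^{\cdot 3})\int_{\tau_1}^{\tau_2}e(1-x)(r-ex)^2\,dx.
\]
On $X$, I would expand $((-K_X-xD)^{\cdot 2}\cdot D)$ using the displayed relation together with the standard intersection numbers for the blowup of a smooth curve: $((\psi^*H)^{\cdot 3})=(H^{\cdot 3})$, $((\psi^*H)^{\cdot 2}\cdot F)=0$, $((\psi^*H)\cdot F^{\cdot 2})=-d$, and $(F^{\cdot 3})=-\deg\sN_{C/X_2}=-(rd+2g-2)$, the last value coming from adjunction $2g-2=(K_{X_2}\cdot C)+\deg\sN_{C/X_2}$ and $(K_{X_2}\cdot C)=-rd$.

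Finally, collecting terms, the coefficient of $(H^{\cdot 3})$ in the integrand of $\eta_1$ is $e(1-x)(r-ex)^2$, so this portion of $\eta_1$ combines with $\eta_2$ to produce $(H^{\cdot 3})\int_0^{\tau_2}e(1-x)(r-ex)^2\,dx$, the first term of the asserted formula. The remaining exceptional terms, carrying the factors $d$ and $2g-2$, reorganize — after factoring $(1-hx)$ out of the $d$-terms — into $-(1-hx)(hr+e+h(hr-3e)x)d+(2g-2)h(1-hx)^2$, which gives the second integral over $[0,\tau_1]$. The computation is entirely elementary; the only delicate points are the signs in the blowup intersection numbers and the algebraic regrouping that allows the $(H^{\cdot 3})$-contribution coming from $X$ to extend the $X_2$-integral to the full range $[0,\tau_2]$, which is precisely what produces the two-term shape of the statement.
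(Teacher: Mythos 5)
Your proposal is correct, and it is precisely the ``straightforward'' computation the paper omits (the paper only states that the proof is straightforward): apply Theorem \ref{eta_thm} with the ample model sequence $X_1=X$, $X_2$, $m=2$, write $-K_X-xD=(r-ex)\psi^*H-(1-hx)F$, and expand using $((\psi^*H)^{\cdot 2}\cdot F)=0$, $(\psi^*H\cdot F^{\cdot 2})=-d$, $(F^{\cdot 3})=-(rd+2g-2)$. I verified that your regrouping reproduces the integrand $-(1-hx)(hr+e+h(hr-3e)x)d+(2g-2)h(1-hx)^2$ and that the $(H^{\cdot 3})$-part of $\eta_1$ merges with $\eta_2$ into the single integral over $[0,\tau_2]$, exactly as stated.
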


Assume that $X$ belongs to No.\ 33--36 in \cite[Table 2]{MoMu}. Then $X$ is toric. 
Then $(X, -K_X)$ is not divisorially stable by Corollary \ref{toric_cor}. 
Moreover, $(X, -K_X)$ is divisorially semistable if and only if $X$ belongs to 
No.\ 34 in \cite[Table 2]{MoMu} by Theorem \ref{introtoric_thm} and \cite{mab}. 
Assume that $X$ belongs to No.\ 31 in \cite[Table 2]{MoMu}. Then $(X, -K_X)$ is not 
divisorially semistable by Corollary \ref{slope_cor} \eqref{slope_cor22} and 
\cite[Theorem 1.5]{fjt}. 
Assume that $X$ belongs to one of No.\ 23, No.\ 28 or No.\ 30 in \cite[Table 2]{MoMu}. 
Then $(X, -K_X)$ is not divisorially semistable by Lemma \ref{rhotwo_lem}. 

We assume that $X$ belongs to neither No.\ 23, No.\ 28, No.\ 30, No.\ 31, 
nor No.\ 33--36. We also assume the existence of a suspicious divisor $D$. 
By \cite[Theorem 5.1]{MoMu83}, the possibility of $X$ and $D$ is one of the following: 
\begin{enumerate}
\renewcommand{\theenumii}{\roman{enumii}}
\renewcommand{\labelenumii}{(\theenumii)}
\item\label{rho2-1}
$X$ belongs to No.\ 15 in \cite[Table 2]{MoMu} and $D$ is the strict transform 
of $A$.
\item\label{rho2-2}
$X$ belongs to No.\ 19 in \cite[Table 2]{MoMu} and $D$ is the exceptional divisor of 
the morphism $X\to V_4$. 
\item\label{rho2-3}
$X$ belongs to No.\ 22 in \cite[Table 2]{MoMu} and $D$ is the exceptional divisor of 
the morphism $X\to V_5$. 
\item\label{rho2-4}
$X$ belongs to No.\ 26 in \cite[Table 2]{MoMu} and $D$ is the exceptional divisor of 
the morphism $X\to V_5$. 
\item\label{rho2-5}
$X$ belongs to No.\ 26 in \cite[Table 2]{MoMu} and $D$ is the exceptional divisor of 
the morphism $X\to Q$. 
\item\label{rho2-6}
$X$ belongs to No.\ 29 in \cite[Table 2]{MoMu} and 
$\sO_X(D)\simeq\cont_{l_2}^*\sO_{\pr^1}(1)$, 
where $\cont_{l_2}$ is the contraction morphism associated to the extremal ray 
$\R_{\geq 0}[l_2]$ (\cite{mat}).
\end{enumerate}

\eqref{rho2-1}
In this case, under the notation in Lemma \ref{three-two_lem}, 
$\tau_1=1$, $\tau_2=2$, $X_2=\pr^3$, $r=4$, $h=1$, $e=2$, $d=6$ and $g=4$. 
Hence $\eta(D)/3=7/6>0$. 

\eqref{rho2-2}
In this case, under the notation in Lemma \ref{three-two_lem}, 
$\tau_1=1$, $\tau_2=2$, $X_2=\pr^3$, $r=4$, $h=1$, $e=2$, $d=5$ and $g=2$ 
(see also \cite[p.\ 117]{MoMu83}). 
Hence $\eta(D)/3=2>0$. 

\eqref{rho2-3}
In this case, under the notation in Lemma \ref{three-two_lem}, 
$\tau_1=1$, $\tau_2=2$, $X_2=\pr^3$, $r=4$, $h=1$, $e=2$, $d=4$ and $g=0$ 
(see also \cite[p.\ 117]{MoMu83}). 
Hence $\eta(D)/3=17/6>0$. 

\eqref{rho2-4}
In this case, under the notation in Lemma \ref{three-two_lem}, 
$\tau_1=1$, $\tau_2=3$, $X_2=Q$, $r=3$, $h=1$, $e=1$, $d=3$ and $g=0$ 
(see also \cite[p.\ 117]{MoMu83}). Hence $\eta(D)/3=0$. 

\eqref{rho2-5}
In this case, under the notation in Lemma \ref{three-two_lem}, 
$\tau_1=1/2$, $\tau_2=2$, $X_2=V_5$, $r=2$, $h=2$, $e=1$, $d=1$ and $g=0$. 
Hence $\eta(D)/3=239/48>0$. 

\eqref{rho2-6}
In this case, under the notation in Lemma \ref{three-two_lem}, 
$\tau_1=1$, $\tau_2=3$, $X_2=Q$, $r=3$, $h=1$, $e=1$, $d=2$ and $g=0$. 
Hence $\eta(D)/3=4/3>0$. 

Therefore we have proved Theorem \ref{three_thm} for the case $\rho(X)=2$.

\subsection{The case $\rho(X)=3$}\label{rho3_section}

We consider the case $\rho(X)=3$. We assume that $D$ is a suspicious divisor. 

\noindent\textbf{The case No.\ 1.}\,\,
Assume that $X$ belongs to No.\ 1 in \cite[Table 3]{MoMu}. 
Then $\Nef(X)=\overline{\Eff}(X)$ by \cite{mat}. Thus there is no 
suspicious divisor. Hence $(X, -K_X)$ is divisorially stable. 

\noindent\textbf{The case No.\ 2.}\,\,
Assume that $X$ belongs to No.\ 2 in \cite[Table 3]{MoMu}. 
Let $H_1$, $H_2$ be a divisor on $X$ corresponds to the pullback of 
$\sO_{\pr^1\times\pr^1}(1, 0)$, $\sO_{\pr^1\times\pr^1}(0, 1)$, respectively. 
Let $F$ be the exceptional divisor of the morphism $\cont_{l_2}$. 
Then we have 
\begin{eqnarray*}
\Nef(X)&=&\R_{\geq 0}[H_1]+\R_{\geq 0}[H_2]+\R_{\geq 0}[2H_1+H_2+2F], \\
\overline{\Eff}(X)&=&\R_{\geq 0}[H_1]+\R_{\geq 0}[H_2]+\R_{\geq 0}[F],\\
-K_X&\sim& 2H_1+H_2+F,\\
\Pic(X)&=&\Z[H_1]\oplus\Z[H_2]\oplus\Z[F].
\end{eqnarray*}
Hence $D\sim H_1$. 
In this case, $\tau_1=1$, $X_2$ is the image of the morphism $\cont_{l_3}$, 
$\sN_{D_2/X_2}$ is nonzero effective, $\tau_2=2$ and $m=2$. Thus $\eta(D)>0$ 
by Lemma \ref{conv_lem}. Hence $(X, -K_X)$ is divisorially stable. 

\noindent\textbf{The case No.\ 3.}\,\,
Assume that $X$ belongs to No.\ 3 in \cite[Table 3]{MoMu}. 
Let $H_1$, $H_2$, $H_3$ be a divisor corresponds to the restriction of 
$\sO(1, 0, 0)$, $\sO(0, 1, 0)$, $\sO(0, 0, 1)$ 
on $X$, respectively. Let $E_2$, $E_3$ be the exceptional divisor of $\cont_{l_2}$, 
$\cont_{l_3}$, respectively. Then $E_2\sim H_1-H_2+2H_3$, $E_3\sim -H_1+H_2+2H_3$ 
and 
\begin{eqnarray*}
\Nef(X)&=&\R_{\geq 0}[H_1]+\R_{\geq 0}[H_2]+\R_{\geq 0}[H_3], \\
\overline{\Eff}(X)&=&\R_{\geq 0}[H_1]+\R_{\geq 0}[H_2]+\R_{\geq 0}[H_1-H_2+2H_3]
+\R_{\geq 0}[-H_1+H_2+2H_3],\\
-K_X&\sim& H_1+H_2+H_3,\\
\Pic(X)&=&\Z[H_1]\oplus\Z[H_2]\oplus\Z[H_3].
\end{eqnarray*}
Hence $D\sim H_1$ or $H_2$. If $D\sim H_1$, then $\tau_1=1$, 
$X_2=\pr^1\times\pr^2$, $\tau_2=3/2$ and $m=2$. Thus $\eta(D)>0$ by 
Lemma \ref{conv_lem}. If $D\sim H_2$, then we have $\eta(D)>0$ in the same way. 
Hence $(X, -K_X)$ is divisorially stable. 

\noindent\textbf{The case No.\ 4.}\,\,
Assume that $X$ belongs to No.\ 4 in \cite[Table 3]{MoMu}. 
Let $H_1$, $H_2$ be a divisor corresponds to the pullback of 
$\sO_{\pr^1\times\pr^2}(1, 0)$, $\sO_{\pr^1\times\pr^2}(0, 1)$ 
on $X$, respectively. Let $E$ be the exceptional divisor of $\cont_{l_1}$. 
Then we have 
\begin{eqnarray*}
\Nef(X)&=&\R_{\geq 0}[H_1]+\R_{\geq 0}[H_2]+\R_{\geq 0}[H_2-E], \\
\overline{\Eff}(X)&=&\R_{\geq 0}[H_1]+\R_{\geq 0}[H_2-E]+\R_{\geq 0}[E],\\
-K_X&\sim& H_1+2H_2-E,\\
\Pic(X)&=&\Z[H_1]\oplus\Z[H_2]\oplus\Z[E].
\end{eqnarray*}
Hence $D\sim H_2-E$. In this case, $D$ is nef, $\tau_1=1$, $X_2$ is the image 
of the morphism $\cont_{l_1}$, $\tau_2=2$ and $m=2$. Thus $\eta(D)>0$ 
by Lemma \ref{conv_lem}. Hence $(X, -K_X)$ is divisorially stable. 

\noindent\textbf{The case No.\ 5.}\,\,
Assume that $X$ belongs to No.\ 5 in \cite[Table 3]{MoMu}. 
Let $H_1$, $H_2$ be a divisor corresponds to the pullback of 
$\sO_{\pr^1\times\pr^2}(1, 0)$, $\sO_{\pr^1\times\pr^2}(0, 1)$ 
on $X$, respectively. Let $E_1$, $E_2$ be the exceptional divisor of $\cont_{l_1}$, 
$\cont_{l_2}$, respectively. Then $E_2\sim 2H_2-E_1$ and 
\begin{eqnarray*}
\Nef(X)&=&\R_{\geq 0}[H_1]+\R_{\geq 0}[H_2]+\R_{\geq 0}[2H_1+5H_2-2E_1], \\
\overline{\Eff}(X)&=&\R_{\geq 0}[H_1]+\R_{\geq 0}[2H_2-E_1]+\R_{\geq 0}[E_1],\\
-K_X&\sim& 2H_1+3H_2-E_1,\\
\Pic(X)&=&\Z[H_1]\oplus\Z[H_2]\oplus\Z[E_1].
\end{eqnarray*}
Hence $D\sim 2H_2-E_1$, $H_1$ or $H_1+2H_2-E_1$. 
Assume that $D\sim 2H_2-E_1$, that is, $D=E_2$. 
Then $\sN_{D/X}\simeq\sO_{\pr^1\times\pr^1}(-1, -1)$, $\tau_1=1$, 
$X_2=\pr^1\times\pr^2$, $\sN_{D_2/X_2}\simeq\sO_{\pr^1\times\pr^1}(0, 4)$, 
$\tau_2=3/2$ and $m=2$. Thus $\eta(D)>0$ by Lemma \ref{three_lem}. 
Assume that $D\sim H_1$. In this case, $\tau_1=1$, $X_2$ is the image of the morphism 
$\cont_{l_2}$, $\sN_{D_2/X_2}$ is nonzero effective, $\tau_2=2$ and $m=2$. 
Thus $\eta(D)>0$ by Lemma \ref{conv_lem}. 
Assume that $D\sim H_1+2H_2-E_1$. In this case, $\tau_1=1$, $X_2=\pr^1\times\pr^2$, 
$D_2\in|\sO(1, 2)|$, $\tau_2=3/2$ and $m=2$. Since $\eta_1=35/12$ and  
$\eta_2=-7/48$, we have $\eta(D)/3>0$. 
Hence $(X, -K_X)$ is divisorially stable. 

\noindent\textbf{The case No.\ 6.}\,\,
Assume that $X$ belongs to No.\ 6 in \cite[Table 3]{MoMu}. 
Let $H_3$ be a divisor corresponds to the pullback of 
$\sO_{\pr^3}(1)$, let $E_1$, $E_2$ be the exceptional divisor of $\cont_{l_1}$, 
$\cont_{l_2}$, respectively. Let $H_1:=-E_1+H_3$ and $H_2:=-E_2+2H_3$. Then 
\begin{eqnarray*}
\Nef(X)&=&\R_{\geq 0}[H_1]+\R_{\geq 0}[H_2]+\R_{\geq 0}[H_3], \\
\overline{\Eff}(X)&=&\R_{\geq 0}[H_1]+\R_{\geq 0}[H_2]+\R_{\geq 0}[-H_1+H_3]+
\R_{\geq 0}[-H_2+2H_3],\\
-K_X&\sim& H_1+H_2+H_3,\\
\Pic(X)&=&\Z[H_1]\oplus\Z[H_2]\oplus\Z[H_3].
\end{eqnarray*}
Hence $D\sim H_1$, $H_2$ or $H_1+H_2$. 
Assume that $D\sim H_1$. In this case, $\tau_1=1$, $X_2$ is the image of the morphism 
$\cont_{l_1}$, $D_2$ corresponds to the pullback of $\sO_{\pr^3}(1)$, 
$\tau_2=2$ and $m=2$. Thus $\eta(D)>0$ by Lemma \ref{conv_lem}. 
Assume that $D\sim H_2$. In this case, $\tau_1=1$, $X_2$ is the image of the morphism 
$\cont_{l_2}$, $D_2$ corresponds to the pullback of $\sO_{\pr^3}(2)$, 
$\tau_2=3/2$ and $m=2$. Thus $\eta(D)>0$ by Lemma \ref{conv_lem}. 
Assume that $D\sim H_1+H_2$. In this case, $\tau_1=1$, $X_2=\pr^3$, 
$D_2\in|\sO_{\pr^3}(3)|$, $\tau_2=4/3$ and $m=2$. 
Thus $\eta(D)>0$ by Lemma \ref{conv_lem}. 
Hence $(X, -K_X)$ is divisorially stable. 

\noindent\textbf{The case No.\ 7.}\,\,
Assume that $X$ belongs to No.\ 7 in \cite[Table 3]{MoMu}. 
Let $H_2$, $H_3$ be a divisor corresponds to the pullback of 
$\sO_{W_6}(1, 0)$, $\sO_{W_6}(0, 1)$, respectively. 
let $E_1$, $E_2$, $E_3$ be the exceptional divisor of $\cont_{l_1}$, 
$\cont_{l_2}$, $\cont_{l_3}$, respectively. 
Let $H_1:=-E_1+H_2+H_3$. Then 
$E_2\sim H_1+2H_2-H_3$, $E_3\sim H_1-H_2+2H_3$ and 
\begin{eqnarray*}
\Nef(X)&=&\R_{\geq 0}[H_1]+\R_{\geq 0}[H_2]+\R_{\geq 0}[H_3], \\
\overline{\Eff}(X)&=&\R_{\geq 0}[H_1]+\R_{\geq 0}[H_1+2H_2-H_3]
+\R_{\geq 0}[-H_1+H_2+H_3]+\R_{\geq 0}[H_1-H_2+2H_3],\\
-K_X&\sim& H_1+H_2+H_3,\\
\Pic(X)&=&\Z[H_1]\oplus\Z[H_2]\oplus\Z[H_3].
\end{eqnarray*}
Hence $D\sim H_1$, $H_2$ or $H_3$. 
Assume that $D\sim H_1$. In this case, $\tau_1=1$, $X_2=W_6$, 
$D_2\sim(-1/2)K_{W_6}$, 
$\tau_2=2$ and $m=2$. Thus $\eta(D)>0$ by Lemma \ref{conv_lem}. 
Assume that $D\sim H_2$. In this case, $\tau_1=1$, $X_2=\pr^1\times\pr^2$, 
$\tau_2=3/2$ and $m=2$. Thus $\eta(D)>0$ by Lemma \ref{conv_lem}. 
If $D\sim H_3$, then we have $\eta(D)>0$ in the same way. 
Hence $(X, -K_X)$ is divisorially stable. 

\noindent\textbf{The case No.\ 8.}\,\,
Assume that $X$ belongs to No.\ 8 in \cite[Table 3]{MoMu}. 
Let $H_1$, $H_2$, $H_3$ be a divisor corresponds to the restriction of 
$\sO_{\F_1\times\pr^2}(\sigma_1+f_1, 0)$, $\sO_{\F_1\times\pr^2}(0, 1)$, 
$\sO_{\F_1\times\pr^2}(f_1, 0)$, respectively. 
Let $E_1$, $E_2$ be the exceptional divisor of $\cont_{l_1}$, 
$\cont_{l_2}$, respectively. Then 
$E_1\sim H_1-H_3$, $E_2\sim -H_1+2H_2+H_3$ and 
\begin{eqnarray*}
\Nef(X)&=&\R_{\geq 0}[H_1]+\R_{\geq 0}[H_2]+\R_{\geq 0}[H_3], \\
\overline{\Eff}(X)&=&\R_{\geq 0}[H_1-H_3]+\R_{\geq 0}[H_3]
+\R_{\geq 0}[-H_1+2H_2+H_3],\\
-K_X&\sim& H_1+H_2+H_3,\\
\Pic(X)&=&\Z[H_1]\oplus\Z[H_2]\oplus\Z[H_3].
\end{eqnarray*}
Hence $D\sim H_1$, $H_3$ or $H_1-H_3$. 
Assume that $D\sim H_1$. In this case, $\tau_1=1$, $X_2=\pr^1\times\pr^2$, 
$\tau_2=3/2$ and $m=2$. Thus $\eta(D)>0$ by Lemma \ref{conv_lem}. 
Assume that $D\sim H_3$. In this case, $\tau_1=1$, $X_2$ is the image of the morphism 
$\cont_{l_1}$, $\sN_{D_2/X_2}$ is nonzero effective, 
$\tau_2=2$ and $m=2$. Thus $\eta(D)>0$ by Lemma \ref{conv_lem}. 
Assume that $D\sim H_1-H_3$, that is, $D=E_1$. 
In this case, $\sN_{D/X}\simeq\sO_{\pr^1\times\pr^1}(-1, 0)$, $\tau_1=1$, $X_2=\pr^1\times\pr^2$, $\sN_{D_2/X_2}\simeq\sO_{\pr^1\times\pr^1}(0, 4)$, 
$\tau_2=2$ and $m=2$. Thus $\eta(D)>0$ by Lemma \ref{conv_lem}. 
Hence $(X, -K_X)$ is divisorially stable. 

\noindent\textbf{The case No.\ 9.}\,\,
Assume that $X$ belongs to No.\ 9 in \cite[Table 3]{MoMu}. 
Let $E_1,\dots,E_4$ be the exceptional divisor of $\cont_{l_1},\dots,\cont_{l_4}$, 
respectively. Let $H:=(E_1+E_3)/4$. Then $E_4\sim -2H+E_1+E_2$ and 
\begin{eqnarray*}
\Nef(X)&=&\R_{\geq 0}[E_1+E_2]+\R_{\geq 0}[E_1+2E_2]+
\R_{\geq 0}[H]+\R_{\geq 0}[E_2+2H], \\
\overline{\Eff}(X)&=&\R_{\geq 0}[4H-E_1]+\R_{\geq 0}[-2H+E_1+E_2]+\R_{\geq 0}[E_1]
+\R_{\geq 0}[E_2],\\
-K_X&\sim& H+E_1+2E_2,\\
\Pic(X)&=&\Z[H]\oplus\Z[E_1]\oplus\Z[E_2].
\end{eqnarray*}
Hence $D\sim 4H-E_1$, $2H+E_2$, $H$, $H+E_2$, $E_1$, $E_2$, $E_1+E_2$, 
$-H+E_1+E_2$ or $-2H+E_1+E_2$. If $D\sim E_2$ or $-2H+E_1+E_2$, then $\eta(D)=0$ 
by Lemma \ref{rhothree_lem}. Assume that $D\sim E_1$. In this case, $\tau_1=1/4$, 
$X_2$ is the image of the morphism $\cont_{l_2}$, $\tau_2=1$, $X_3$ is the projective 
cone of a Veronese surface, $\tau_3=5/4$ and $m=3$. 
We have $\eta_1=1225/384$ and $\eta_3=-1/96$
since $((-K_{X_3})^{\cdot 3})=125/2$. Thus $\eta(D)/3>1225/384-1/96>0$. 
If $D\sim 4H-E_1$, then $\eta(D)>0$ in the same way. 
Assume that $D\sim E_1+E_2$. In this case, $D$ is nef, $\tau_1=1/2$, 
$X_2$ is the image of the morphism $\cont_{l_2}$, $\tau_2=1$, $X_3$ is the projective 
cone of a Veronese surface, $\tau_3=5/4$ and $m=3$. 
Thus $\eta(D)>0$ by Lemma \ref{conv_lem}. If $D\sim 2H+E_2$, then $\eta(D)>0$ 
in the same way. Assume that $D\sim H_1$. In this case, $\tau_1=1$, 
$X_2$ is the image of the contraction morphism associated to the extremal face 
spanned by $\R_{\geq 0}[l_2]$ and $\R_{\geq 0}[l_4]$, 
$D_2\sim_\Q(-1/3)K_{X_2}$, $\tau_2=3$ and $m=2$. 
We note that $((-K_{X_2})^{\cdot 3})=27$. Since $\eta_1=10/3$ and $\eta_2=4/3$, 
we have $\eta(D)/3>0$. 
Assume that $D\sim H+E_2$. In this case, $\tau_1=1$, $X_2$ is the projective cone 
of a Veronese surface, $D_2\sim_\Q(-3/5)K_{X_2}$, $\tau_2=5/3$ and $m=2$. 
Since $\eta_1=19/6$ and $\eta_2=-2/9$, we have $\eta(D)/3>0$. 
If $D\sim -H+E_1+E_2$, then $\eta(D)>0$ 
in the same way. Hence $(X, -K_X)$ is divisorially semistable but not divisorially stable. 

\noindent\textbf{The case No.\ 10.}\,\,
Assume that $X$ belongs to No.\ 10 in \cite[Table 3]{MoMu}. 
Let $H_3$ be a divisor corresponds to the pullback of 
$\sO_Q(1)$, let $E_1$, $E_2$ be the exceptional divisor of $\cont_{l_1}$, 
$\cont_{l_2}$, respectively. Let $H_1:=H_3-E_2$ and $H_2:=H_3-E_1$. Then we have 
\begin{eqnarray*}
\Nef(X)&=&\R_{\geq 0}[H_1]+\R_{\geq 0}[H_2]+\R_{\geq 0}[H_3], \\
\overline{\Eff}(X)&=&\R_{\geq 0}[H_1]+\R_{\geq 0}[H_2]+\R_{\geq 0}[-H_1+H_3]
+\R_{\geq 0}[-H_2+H_3],\\
-K_X&\sim& H_1+H_2+H_3,\\
\Pic(X)&=&\Z[H_1]\oplus\Z[H_2]\oplus\Z[H_3].
\end{eqnarray*}
Hence $D\sim H_1$, $H_2$ or $H_1+H_2$. 
Assume that $D\sim H_1$. In this case, $\tau_1=1$, $X_2$ is the blowup of $Q$ along 
a conic, $D_2$ corresponds to a pullback of $\sO_Q(1)$, 
$\tau_2=2$ and $m=2$. Thus $\eta(D)>0$ by Lemma \ref{conv_lem}. 
If $D\sim H_2$, then $\eta(D)>0$ in the same way. 
Assume that $D\sim H_1+H_2$. In this case, $\tau_1=1$, $X_2=Q$, 
$D_2\in|\sO_Q(2)|$, $\tau_2=3/2$ and $m=2$. 
Thus $\eta(D)>0$ by Lemma \ref{conv_lem}. Hence $(X, -K_X)$ is divisorially stable. 

\noindent\textbf{The case No.\ 11.}\,\,
Assume that $X$ belongs to No.\ 11 in \cite[Table 3]{MoMu}. 
Let $H_2$, $H_3$ be a divisor corresponds to the pullback of 
$H_{\pr^2}$, $\xi_\pr$ on $V_7$, respectively. 
Let $E_1,\dots,E_3$ be the exceptional divisor of $\cont_{l_1},\dots,\cont_{l_3}$, 
respectively. Let $H_1:=-E_1+H_2+H_3$. Then $E_2\sim -H_2+H_3$, 
$E_3\sim H_1+2H_2-H_3$ and 
\begin{eqnarray*}
\Nef(X)&=&\R_{\geq 0}[H_1]+\R_{\geq 0}[H_2]+\R_{\geq 0}[H_3], \\
\overline{\Eff}(X)&=&\R_{\geq 0}[H_1]+\R_{\geq 0}[H_1+2H_2-H_3]
+\R_{\geq 0}[-H_2+H_3]+\R_{\geq 0}[-H_1+H_2+H_3],\\
-K_X&\sim& H_1+H_2+H_3,\\
\Pic(X)&=&\Z[H_1]\oplus\Z[H_2]\oplus\Z[H_3].
\end{eqnarray*}
Hence $D\sim H_3$, $-H_2+H_3$, $H_1$, $H_2$, $H_1+H_2$ or $H_1+2H_2-H_3$. 
Assume that $D\sim H_3$. In this case, $\tau_1=1$, $X_2=\pr^1\times\pr^2$, 
$\tau_2=3/2$ and $m=2$. Thus $\eta(D)>0$ by Lemma \ref{conv_lem}. 
Assume that $D\sim -H_2+H_3$, that is, $D=E_2$. In this case, 
$\sN_{D/X}\simeq\sO_{\F_1}(-\sigma_1-f_1)$, $\tau_1=1$, $X_2=\pr^1\times\pr^2$, 
$D_2\in|\sO(1, 1)|$, $\tau_2=2$ and $m=2$. Thus $\eta(D)>0$ by Lemma \ref{three_lem}. 
Assume that $D\sim H_1$. In this case, $\tau_1=1$, $X_2=V_7$, $D_2\sim(-1/2)K_{V_7}$, 
$\tau_2=2$ and $m=2$. Thus $\eta(D)>0$ by Lemma \ref{conv_lem}. 
Assume that $D\sim H_2$. In this case, $\tau_1=1$, $X_2$ is the blowup of $\pr^3$ 
along a quartic which is an intersection of two quadrics, $D_2$ corresponds to 
the pullback of $\sO_{\pr^3}(1)$, $\tau_2=2$ and $m=2$. 
Thus $\eta(D)>0$ by Lemma \ref{conv_lem}. 
Assume that $D\sim H_1+H_2$. In this case, $\tau_1=1$, $X_2=\pr^3$, 
$\tau_2=4/3$ and $m=2$. Thus $\eta(D)>0$ by Lemma \ref{conv_lem}. 
Assume that $D\sim H_1+2H_2-H_3$, that is, $D=E_3$. In this case, $\tau_1=1/2$, 
$X_2$ is the blowup of $\pr^3$ 
along a quartic which is an intersection of two quadrics, $D_2$ corresponds to 
the sum of the pullback of $\sO_{\pr^3}(1)$ and the pull back of $\sO_{\pr^1}(1)$, 
$\tau_2=1$, $X_3=\pr^3$, $D_3\in|\sO_{\pr^3}(3)|$, $\tau_3=4/3$ and $m=3$. 
Since $\tau_2=173/192$ and $\tau_3=-1/36$, 
we have $\eta(D)/3>173/192-1/36>0$. Hence $(X, -K_X)$ is divisorially stable. 

\noindent\textbf{The case No.\ 12.}\,\,
Assume that $X$ belongs to No.\ 12 in \cite[Table 3]{MoMu}. 
Let $H_3$ be a divisor corresponds to the pullback of $\sO_{\pr^3}(1)$. 
Let $E_1,\dots,E_3$ be the exceptional divisor of $\cont_{l_1},\dots,\cont_{l_3}$, 
respectively. Let $H_1:=-E_1+H_3$ and $H_2:=-E_2+2H_3$. Then 
$E_3\sim H_1+2H_2-H_3$ and 
\begin{eqnarray*}
\Nef(X)&=&\R_{\geq 0}[H_1]+\R_{\geq 0}[H_2]+\R_{\geq 0}[H_3], \\
\overline{\Eff}(X)&=&\R_{\geq 0}[H_1]+\R_{\geq 0}[H_1+2H_2-H_3]
+\R_{\geq 0}[-H_1+H_3]+\R_{\geq 0}[-H_2+2H_3],\\
-K_X&\sim& H_1+H_2+H_3,\\
\Pic(X)&=&\Z[H_1]\oplus\Z[H_2]\oplus\Z[H_3].
\end{eqnarray*}
Hence $D\sim -H_1+H_3$, $H_3$, $H_1$, $H_2$ or $H_1+H_2$. 
Assume that $D\sim -H_1+H_3$, that is, $D=E_1$. In this case, 
$\sN_{D/X}\simeq\sO_{\pr^1\times\pr^1}(-1, 1)$, $\tau_1=1$, $X_2=\pr^1\times\pr^2$, 
$\sN_{D_2/X_2}\simeq\sO_{\pr^1\times\pr^1}(0, 4)$, $\tau_2=3/2$ and $m=2$. 
Thus $\eta(D)>0$ by Lemma \ref{three_lem}. 
Assume that $D\sim H_3$. In this case, $\tau_1=1$, $X_2=\pr^1\times\pr^2$, 
$\tau_2=3/2$ and $m=2$. Thus $\eta(D)>0$ by Lemma \ref{conv_lem}. 
Assume that $D\sim H_1$. In this case, $\tau_1=1$, $X_2$ is the blowup of $\pr^3$ 
along a twisted cubic, $D_2$ corresponds to the pullback of $\sO_{\pr^3}(1)$, 
$\tau_2=2$ and $m=2$. Thus $\eta(D)>0$ by Lemma \ref{conv_lem}. 
Assume that $D\sim H_2$. In this case, $\tau_1=1$, 
$X_2=\pr_{\pr^1}(\sO^{\oplus 2}\oplus\sO(1))$, $D_2\in|\xi_\pr^{\otimes 2}|$, 
$\tau_2=3/2$ and $m=2$. Thus $\eta(D)>0$ by Lemma \ref{conv_lem}. 
Assume that $D\sim H_1+H_2$. In this case, $\tau_1=1$, 
$X_2=\pr^3$, $\tau_2=4/3$ and $m=2$. Thus $\eta(D)>0$ by Lemma \ref{conv_lem}. 
Hence $(X, -K_X)$ is divisorially stable. 

\noindent\textbf{The case No.\ 13.}\,\,
Assume that $X$ belongs to No.\ 13 in \cite[Table 3]{MoMu}. 
Let $H_2$, $H_3$ be a divisor corresponds to the pullback of $\sO_{W_6}(1, 0)$, 
$\sO_{W_6}(0, 1)$, respectively. 
Let $E_1,\dots,E_3$ be the exceptional divisor of $\cont_{l_1},\dots,\cont_{l_3}$, 
respectively. Let $H_1:=-E_1+H_2+H_3$. Then 
$E_2\sim H_1-H_2+H_3$, $E_3\sim H_1+H_2-H_3$ and 
\begin{eqnarray*}
\Nef(X)&=&\R_{\geq 0}[H_1]+\R_{\geq 0}[H_2]+\R_{\geq 0}[H_3], \\
\overline{\Eff}(X)&=&\R_{\geq 0}[H_1+H_2-H_3]+\R_{\geq 0}[H_1-H_2+H_3]
+\R_{\geq 0}[-H_1+H_2+H_3],\\
-K_X&\sim& H_1+H_2+H_3,\\
\Pic(X)&=&\Z[H_1]\oplus\Z[H_2]\oplus\Z[H_3].
\end{eqnarray*}
Hence $D\sim H_1$, $H_2$ or $H_3$. 
Assume that $D\sim H_1$. In this case, $\tau_1=1$, $X_2=W_6$, 
$D_2\sim(-1/2)K_{W_6}$, $\tau_2=2$ and $m=2$. 
Thus $\eta(D)>0$ by Lemma \ref{conv_lem}. If $D\sim H_2$ or $H_3$, then 
$\eta(D)>0$ 
in the same way. Hence $(X, -K_X)$ is divisorially stable. 

\noindent\textbf{The case No.\ 14.}\,\,
Assume that $X$ belongs to No.\ 14 in \cite[Table 3]{MoMu}. 
By Lemma \ref{rhothree_lem}, $(X, -K_X)$ is not divisorially semistable. 

\noindent\textbf{The case No.\ 15.}\,\,
Assume that $X$ belongs to No.\ 15 in \cite[Table 3]{MoMu}. 
Let $H_3$ be a divisor corresponds to the pullback of $\sO_Q(1)$. 
Let $E_1,\dots,E_3$ be the exceptional divisor of $\cont_{l_1},\dots,\cont_{l_3}$, 
respectively. Let $H_1:=-E_2+H_3$ and $H_2:=-E_1+H_3$. Then 
$E_3\sim H_1+2H_2-H_3$ and 
\begin{eqnarray*}
\Nef(X)&=&\R_{\geq 0}[H_1]+\R_{\geq 0}[H_2]+\R_{\geq 0}[H_3], \\
\overline{\Eff}(X)&=&\R_{\geq 0}[H_1]+\R_{\geq 0}[H_1+2H_2-H_3]
+\R_{\geq 0}[-H_1+H_3]+\R_{\geq 0}[-H_2+H_3],\\
-K_X&\sim& H_1+H_2+H_3,\\
\Pic(X)&=&\Z[H_1]\oplus\Z[H_2]\oplus\Z[H_3].
\end{eqnarray*}
Hence $D\sim H_3$, $-H_1+H_3$, $-H_2+H_3$, $H_1$, $H_2$, $H_1+H_2$ or 
$H_1+2H_2-H_3$. 
Assume that $D\sim H_3$. In this case, $\tau_1=1$, $X_2=\pr^1\times\pr^2$, 
$D_2\in|\sO(1, 2)|$, $\tau_2=3/2$ and $m=2$. 
Thus $\eta(D)>0$ by Lemma \ref{conv_lem}. 
Assume that $D\sim -H_1+H_3$, that is, $D=E_2$. 
In this case, $\sN_{D/X}\simeq\sO_{\pr^1\times\pr^1}(-1, 2)$, 
$\tau_1=1$, $X_2=\pr^1\times\pr^2$, 
$\sN_{D_2/X_2}\simeq\sO_{\pr^1\times\pr^1}(0, 2)$, $\tau_2=3/2$ and $m=2$. 
Thus $\eta(D)>0$ by Lemma \ref{three_lem}. 
Assume that $D\sim -H_2+H_3$, that is, $D=E_1$. 
In this case, $\sN_{D/X}\simeq\sO_{\F_1}(-\sigma_1)$, 
$\tau_1=1$, $X_2=\pr^1\times\pr^2$, 
$\sN_{D_2/X_2}\simeq\sO_{\F_1}(\sigma_1+2f_1)$, $\tau_2=2$ and $m=2$. 
Thus $\eta(D)>0$ by Lemma \ref{three_lem}. 
Assume that $D\sim H_1$. In this case, $\tau_1=1$, $X_2$ is the blowup of $Q$ along 
a line, $D_2$ corresponds to the pullback of $\sO_Q(1)$, $\tau_2=2$ and $m=2$. 
Thus $\eta(D)>0$ by Lemma \ref{conv_lem}. 
Assume that $D\sim H_2$. In this case, $\tau_1=1$, $X_2$ is the blowup of $Q$ along 
a conic, $D_2$ corresponds to the pullback of $\sO_Q(1)$, $\tau_2=2$ and $m=2$. 
Thus $\eta(D)>0$ by Lemma \ref{conv_lem}. 
Assume that $D\sim H_1+H_2$. In this case, $\tau_1=1$, $X_2=Q$, 
$\tau_2=3/2$ and $m=2$. Thus $\eta(D)>0$ by Lemma \ref{conv_lem}. 
Assume that $D\sim H_1+2H_2-H_3$, that is, $D=E_3$. 
In this case, $\tau_1=1/2$, $X_2$ is the blowup of $Q$ along a conic, 
$D_2$ corresponds to the sum of the pullback of $\sO_Q(1)$ and the pullback 
of $\sO_{\pr^1}(1)$, $\tau_2=1$, $X_3=Q$, $D_2\in|\sO_Q(2)|$, $\tau_3=3/2$ and $m=3$. 
Since $\eta_2=9/8$ and $\eta_3=-1/12$, 
we have $\eta(D)/3>9/8-1/12>0$. Hence $(X, -K_X)$ is divisorially stable. 

\noindent\textbf{The case No.\ 16.}\,\,
Assume that $X$ belongs to No.\ 16 in \cite[Table 3]{MoMu}. 
We consider the case that $D$ is the strict transform of the negative section of 
the morphism $V_7\to\pr^2$. Then $\sN_{D/X}\simeq\sO_{\F_1}(-\sigma_1-f_1)$, 
$\tau_1=1$, $X_2=W_6$, $D_2\in|\sO_{W_6}(1, 0)|$, $\tau_2=2$ and $m=2$. 
Thus $\eta(D)<0$ by Lemma \ref{three_lem}. 
Hence $(X, -K_X)$ is not divisorially semistable. 

\noindent\textbf{The case No.\ 17.}\,\,
Assume that $X$ belongs to No.\ 17 in \cite[Table 3]{MoMu}. 
Let $H_1$, $H_2$, $H_3$ be a divisor corresponds to the restriction of 
$\sO_{\pr^1\times\pr^1\times\pr^2}(1, 0, 0)$, 
$\sO_{\pr^1\times\pr^1\times\pr^2}(0, 1, 0)$, 
$\sO_{\pr^1\times\pr^1\times\pr^2}(0, 0, 1)$, respectively. 
Let $E_1$, $E_2$ be the exceptional divisor of $\cont_{l_1}$, $\cont_{l_2}$, respectively. 
Then $E_1\sim H_1-H_2+H_3$, $E_2\sim -H_1+H_2+H_3$ and 
\begin{eqnarray*}
\Nef(X)&=&\R_{\geq 0}[H_1]+\R_{\geq 0}[H_2]+\R_{\geq 0}[H_3], \\
\overline{\Eff}(X)&=&\R_{\geq 0}[H_1]+\R_{\geq 0}[H_2]
+\R_{\geq 0}[H_1-H_2+H_3]+\R_{\geq 0}[-H_1+H_2+H_3],\\
-K_X&\sim& H_1+H_2+2H_3,\\
\Pic(X)&=&\Z[H_1]\oplus\Z[H_2]\oplus\Z[H_3].
\end{eqnarray*}
Hence $D\sim H_1+H_3$, $H_2+H_3$, $H_1-H_2+H_3$, $-H_1+H_2+H_3$, 
$H_1$ or $H_2$. 
Assume that $D\sim H_1+H_3$. In this case, $\tau_1=1$, $X_2=\pr^1\times\pr^2$, 
$\tau_2=3/2$ and $m=2$. Thus $\eta(D)>0$ by Lemma \ref{conv_lem}. 
If $D\sim H_2+H_3$, then $\eta(D)>0$ in the same way. 
Assume that $D\sim H_1-H_2+H_3$. In this case, $\tau_1=1$, $X_2=\pr^1\times\pr^2$, 
$D_2\in|\sO(0, 2)|$, $\tau_2=3/2$ and $m=2$. Since $\eta_1=31/6$ and 
$\eta_2=-1/3$, 
we have $\eta(D)/3>0$. If $D\sim -H_1+H_2+H_3$, then $\eta(D)>0$ in the same way. 
Assume that $D\sim H_1$. In this case, $\tau_1=1$, $X_2=\pr^1\times\pr^2$, 
$D_2\in|\sO(1, 1)|$, $\tau_2=2$ and $m=2$. Thus $\eta(D)>0$ by Lemma \ref{conv_lem}. 
If $D\sim H_2$, then $\eta(D)>0$ in the same way. 
Hence $(X, -K_X)$ is divisorially stable.

\noindent\textbf{The case No.\ 18.}\,\,
Assume that $X$ belongs to No.\ 18 in \cite[Table 3]{MoMu}. 
We consider the case that $D$ is the strict transform of the plane in $\pr^3$ passing 
through the conic which is the center of the blowup. 
Then $\sN_{D/X}\simeq\sO_{\F_1}(-\sigma_1-f_1)$, 
$\tau_1=1$, $X_2=\pr_{\pr^1}(\sO^{\oplus 2}\oplus\sO(1))$, 
$D_2\in|\xi_\pr|$, $\tau_2=3$ and $m=2$. 
Thus $\eta(D)<0$ by Lemma \ref{three_lem}. 
Hence $(X, -K_X)$ is not divisorially semistable. 

\noindent\textbf{The case No.\ 19.}\,\,
Assume that $X$ belongs to No.\ 19 in \cite[Table 3]{MoMu}. 
Let $H$ be a divisor corresponds to the pullback of $\sO_Q(1)$. 
Let $E_1,\dots,E_4$ be the exceptional divisor of $\cont_{l_1},\dots,\cont_{l_4}$, 
respectively. 
Then $E_3\sim -2E_2+H$, $E_4\sim -2E_1+H$ and 
\begin{eqnarray*}
\Nef(X)&=&\R_{\geq 0}[H]+\R_{\geq 0}[-E_1+H]+\R_{\geq 0}[-E_2+H]
+\R_{\geq 0}[-E_1-E_2+H], \\
\overline{\Eff}(X)&=&\R_{\geq 0}[E_1]+\R_{\geq 0}[E_2]
+\R_{\geq 0}[-2E_1+H]+\R_{\geq 0}[-2E_2+H],\\
-K_X&\sim& -2E_1-2E_2+3H,\\
\Pic(X)&=&\Z[E_1]\oplus\Z[E_2]\oplus\Z[H].
\end{eqnarray*}
Hence $D\sim E_1$, $E_2$, $-E_1-E_2+H$, $-E_1+H$, $-E_2+H$, $-2E_1+H$, $-2E_2+H$, 
$-2E_1+E_2+H$, $E_1-2E_2+H$, $-2E_1-E_2+2H$, $-E_1-2E_2+2H$, 
$-3E_1-E_2+2H$ or $-E_1-3E_2+2H$. 
Assume that $D\sim E_1$ or $E_2$. In this case, $\eta(D)=0$ by Lemma \ref{three_lem}. 
Assume that $D\sim -E_1-E_2+H$. In this case, $\tau_1=1$, $X_2=Q$, 
$D_2\in|\sO_Q(1)|$, $\tau_2=3$ and $m=2$. Since $\eta_1=8/3$ and $\eta_2=-5/6$, 
we have $\eta(D)/3>0$. 
Assume that $D\sim -E_1+H$. In this case, $D$ is nef, $\tau_1=1$, $X_2=V_7$, 
$D_2\sim(-1/2)K_{V_7}$, $\tau_2=2$ and $m=2$. 
Thus $\eta(D)>0$ by Lemma \ref{conv_lem}. If $D\sim -E_2+H$, then $\eta(D)>0$ 
in the same way. 
Assume that $D\sim -2E_1+H$, that is, $D=E_4$. In this case, $\tau_1=1$, $X_2=\pr^3$, 
$D_2\in|\sO(2)|$, $\tau_2=2$ and $m=2$. Since $\eta_1=31/6$ and $\eta_2=-2/3$, 
we have $\eta(D)/3>0$. If $D\sim -2E_2+H$, then $\eta(D)>0$ in the same way. 
Assume that $D\sim -2E_1+E_2+H$. In this case, $\tau_1=1/2$, $X_2=V_7$, 
$D_2\in|\xi_\pr\otimes H_{\pr^2}^{\otimes 2}|$, $\tau_2=1$, $X_3=\pr^3$, 
$D_3\in|\sO(3)|$, $\tau_3=4/3$ and $m=3$. 
Since $\eta_2=91/64$ and $\eta_3=-1/36$, 
we have $\eta(D)/3>91/64-1/36>0$. If $D\sim E_1-2E_2+H$, then $\eta(D)>0$ 
in the same way. 
Assume that $D\sim -2E_1-E_2+2H$. In this case, $D$ is nef, $\tau_1=1$, $X_2=\pr^3$, 
$D_2\in|\sO(3)|$, $\tau_2=4/3$ and $m=2$. 
Thus $\eta(D)>0$ by Lemma \ref{conv_lem}. If $D\sim -E_1-2E_2+2H$, 
then $\eta(D)>0$ in the same way. 
Assume that $D\sim -3E_1-E_2+2H$. In this case, $\tau_1=2/3$, $X_2$ is the blowup 
of $Q$ along a point, 
$D_2\sim -E+2H$, where $E$ is the exceptional divisor of the morphism $X_2\to Q$ 
and $H$ corresponds to the pullback of $\sO_Q(1)$, $\tau_2=1$, $X_3=\pr^3$, 
$D_3\in|\sO(3)|$, $\tau_3=4/3$ and $m=3$. 
Since $\eta_2=125/324$ and $\eta_3=-1/36$, 
we have $\eta(D)/3>125/324-1/36>0$. If $D\sim -E_1-3E_2+2H$, then $\eta(D)>0$ 
in the same way. 
Hence $(X, -K_X)$ is divisorially semistable but not divisorially stable. 

\noindent\textbf{The case No.\ 20.}\,\,
Assume that $X$ belongs to No.\ 20 in \cite[Table 3]{MoMu}. 
Let $H$ be a divisor corresponds to the pullback of $\sO_Q(1)$. 
Let $E_1,\dots,E_3$ be the exceptional divisor of $\cont_{l_1},\dots,\cont_{l_3}$, 
respectively. 
Then $E_3\sim -E_1-E_2+H$ and 
\begin{eqnarray*}
\Nef(X)&=&\R_{\geq 0}[H]+\R_{\geq 0}[-E_1+H]+\R_{\geq 0}[-E_2+H], \\
\overline{\Eff}(X)&=&\R_{\geq 0}[E_1]+\R_{\geq 0}[E_2]
+\R_{\geq 0}[-E_1-E_2+H],\\
-K_X&\sim& -E_1-E_2+3H,\\
\Pic(X)&=&\Z[E_1]\oplus\Z[E_2]\oplus\Z[H].
\end{eqnarray*}
Hence $D\sim E_1$, $E_2$, $E_1+E_2$, $H$, $-E_1+H$, $-E_2+H$, $-E_1-E_2+H$, 
$-E_1-E_2+2H$, $-2E_1-E_2+2H$ or $-E_1-2E_2+2H$. 
Assume that $D\sim E_1$. In this case, $\sN_{D/X}\simeq\sO_{\F_1}(-\sigma_1)$, 
$\tau_1=1$, $X_2=W_6$, $D_2\in|\sO_{W_6}(1, 0)|$, 
$\tau_2=2$ and $m=2$. Thus $\eta(D)>0$ by Lemma \ref{three_lem}. 
If $D\sim E_2$, then $\eta(D)>0$ in the same way. 
Assume that $D\sim E_1+E_2$. In this case, $\tau_1=1/2$, $X_2=W_6$, 
$D_2\sim(-1/2)K_{W_6}$, $\tau_2=2$ and $m=2$. 
Since $\eta_2=27/32$, we have $\eta(D)>27/32>0$. 
Assume that $D\sim -E_1+H$. In this case, $D$ is nef, 
$\tau_1=1$, $X_2$ is the blowup of $Q$ along a line, $D_2$ corresponds to the 
pullback of $\sO_Q(1)$, $\tau_2=2$ and $m=2$. 
Thus $\eta(D)>0$ by Lemma \ref{conv_lem}. If $D\sim -E_2+H$, then $\eta(D)>0$ 
in the same way. 
Assume that $D\sim -E_1-E_2+H$, that is, $D=E_3$. In this case, 
$\sN_{D/X}\simeq\sO_{\pr^1\times\pr^1}(-1, 1)$, 
$\tau_1=1$, $X_2=Q$, $D_2\in|\sO_Q(1)|$, $\tau_2=3$ and $m=2$. 
Thus $\eta(D)>0$ by Lemma \ref{three_lem}. 
Assume that $D\sim -E_1-E_2+2H$. In this case, $D$ is nef, 
$\tau_1=1$, $X_2=Q$, $\tau_2=3/2$ and $m=2$. 
Thus $\eta(D)>0$ by Lemma \ref{conv_lem}. 
Assume that $D\sim -2E_1-E_2+2H$. In this case, 
$\tau_1=1/2$, $X_2$ is the blowup of $Q$ along a line, $D_2\sim -E+2H$, 
where $H$ corresponds to the pullback of $\sO_Q(1)$ and $E$ is the exceptional 
divisor of the morphism $X_2\to Q$, $\tau_2=1$, $X_3=Q$, $D_3\in|\sO_Q(2)|$, 
$\tau_3=3/2$ and $m=2$. Since $\eta_2=241/192$ and $\eta_3=-1/12$, we have 
$\eta(D)/3>241/192-1/12>0$. 
If $D\sim -E_1-2E_2+2H$, then $\eta(D)>0$ in the same way. 
Hence $(X, -K_X)$ is divisorially stable. 

\noindent\textbf{The case No.\ 21.}\,\,
Assume that $X$ belongs to No.\ 21 in \cite[Table 3]{MoMu}. 
We consider the case that $D$ is the strict transform of the divisor 
in $|\sO_{\pr^1\times\pr^2}(0, 1)|$ passing 
through the conic which is the center of the blowup. 
Then $\sN_{D/X}\simeq\sO_{\pr^1\times\pr^1}(-1, -1)$, 
$\tau_1=1$, $X_2=\pr^1\times\pr^2$, 
$D_2\in|\sO(0, 1)|$, $\tau_2=3$ and $m=2$. 
Thus $\eta(D)<0$ by Lemma \ref{three_lem}. 
Hence $(X, -K_X)$ is not divisorially semistable. 

\noindent\textbf{The case No.\ 22.}\,\,
Assume that $X$ belongs to No.\ 22 in \cite[Table 3]{MoMu}. 
By Lemma \ref{rhothree_lem}, $(X, -K_X)$ is not divisorially semistable. 

\noindent\textbf{The case No.\ 23.}\,\,
Assume that $X$ belongs to No.\ 23 in \cite[Table 3]{MoMu}. 
We consider the case that $D$ is the strict transform of the hyperplane 
in $\pr^3$ passing 
through the conic which is the center of the blowup. 
Then $\sN_{D/X}\simeq\sO_{\F_1}(-\sigma_1-f_1)$, 
$\tau_1=1$, $X_2=V_7$, 
$\sN_{D_2/X_2}\simeq\sO_{\F_1}(f_1)$, $\tau_2=2$, $X_3=\pr^3$, $D_3\in|\sO(1)|$, 
$\tau_3=4$ and $m=3$. 
Thus $\eta(D)/3<\eta_1+\eta_2<0$ by Lemma \ref{three_lem}. 
Hence $(X, -K_X)$ is not divisorially semistable. 

\noindent\textbf{The case No.\ 24.}\,\,
Assume that $X$ belongs to No.\ 24 in \cite[Table 3]{MoMu}. 
We consider the case that $D$ is the exceptional divisor of the morphism $X\to W_6$. 
Then $\sN_{D/X}\simeq\sO_{\pr^1\times\pr^1}(-1, 0)$, 
$\tau_1=1$, $X_2=\pr^1\times\pr^2$, $D_2\in|\sO(0, 1)|$, $\tau_2=3$ and $m=2$. 
Thus $\eta(D)<0$ by Lemma \ref{three_lem}. 
Hence $(X, -K_X)$ is not divisorially semistable. 

\noindent\textbf{The case No.\ 25--31.}\,\,
Assume that $X$ belongs to one of No.\ 25--31 in \cite[Table 3]{MoMu}. 
Then $X$ is toric. Then $(X, -K_X)$ is not divisorially stable by Corollary \ref{toric_cor}. 
Moreover, $(X, -K_X)$ is divisorially semistable if and only if $X$ belongs to either 
No.\ 25 or No.\ 27 in \cite[Table 3]{MoMu} by 
Theorem \ref{introtoric_thm} and \cite{mab}. 

Therefore we have proved Theorem \ref{three_thm} for the case $\rho(X)=3$.

\subsection{The case $\rho(X)=4$}\label{rho4_section}

We consider the case $\rho(X)=4$. We assume that $D$ is a suspicious divisor. 

\noindent\textbf{The case No.\ 1.}\,\,
Assume that $X$ belongs to No.\ 1 in \cite[Table 4]{MoMu}. 
Let $H_1,\dots,H_4$ be a divisor corresponds to the restriction of 
$\sO_{\pr^1\times\pr^1\times\pr^1\times\pr^1}(1, 0, 0, 0),\dots,
\sO_{\pr^1\times\pr^1\times\pr^1\times\pr^1}(0, 0, 0, 1)$, respectively. 
Let $E_1,\dots,E_4$ be the exceptional divisor of 
$\cont_{l_1},\dots,\cont_{l_4}$, respectively. 
Then $E_1\sim -H_1+H_2+H_3+H_4$, $E_2\sim H_1-H_2+H_3+H_4$, 
$E_3\sim H_1+H_2-H_3+H_4$, $E_4\sim H_1+H_2+H_3-H_4$ and 
\begin{eqnarray*}
\Nef(X)&=&\R_{\geq 0}[H_1]+\R_{\geq 0}[H_2]+\R_{\geq 0}[H_3]+\R_{\geq 0}[H_4], \\
\overline{\Eff}(X)&=&\R_{\geq 0}[H_1]+\R_{\geq 0}[H_2]
+\R_{\geq 0}[H_3]+\R_{\geq 0}[H_4]\\
&+&\R_{\geq 0}[-H_1+H_2+H_3+H_4]+\R_{\geq 0}[H_1-H_2+H_3+H_4]\\
&+&\R_{\geq 0}[H_1+H_2-H_3+H_4]+\R_{\geq 0}[H_1+H_2+H_3-H_4],\\
-K_X&\sim& H_1+H_2+H_3+H_4,\\
\Pic(X)&=&\Z[H_1]\oplus\Z[H_2]\oplus\Z[H_3]\oplus\Z[H_4].
\end{eqnarray*}
Hence $D\sim H_1$, $H_2$, $H_3$ or $H_4$. 
Assume that $D\sim H_1$. In this case, $\tau_1=1$, 
$X_2=\pr^1\times\pr^1\times\pr^1$, $D_2\in|\sO(1, 1, 1)|$, 
$\tau_2=2$ and $m=2$. Thus $\eta(D)>0$ by Lemma \ref{conv_lem}. 
If $D\sim H_2$, $H_3$ or $H_4$, then $\eta(D)>0$ in the same way. 
Hence $(X, -K_X)$ is divisorially stable. 

\noindent\textbf{The case No.\ 2.}\,\,
Assume that $X$ belongs to No.\ 2 in \cite[Table 4]{MoMu}. 
Let $H_1$, $H_2$ be a divisor corresponds to the pullback of 
$\sO_{\pr^1\times\pr^1}(1, 0)$, $\sO_{\pr^1\times\pr^1}(0, 1)$, respectively. 
Let $E_1$, $E_2$, $E_3$, $E_5$ be the exceptional divisor of 
$\cont_{l_1}$, $\cont_{l_2}$, $\cont_{l_3}$, $\cont_{l_5}$, respectively. 
Then $E_1\sim H_1+H_2-E_3+E_5$, $E_2\sim H_1+H_2+E_3-E_5$ and 
\begin{eqnarray*}
\Nef(X)&=&\R_{\geq 0}[H_1]+\R_{\geq 0}[H_2]+\R_{\geq 0}[H_1+H_2+E_3]
+\R_{\geq 0}[H_1+H_2+E_5]\\
&+&\R_{\geq 0}[H_1+H_2+E_3+E_5], \\
\overline{\Eff}(X)&=&\R_{\geq 0}[H_1]+\R_{\geq 0}[H_2]
+\R_{\geq 0}[E_3]+\R_{\geq 0}[E_5]\\
&+&\R_{\geq 0}[H_1+H_2+E_3-E_5]+\R_{\geq 0}[H_1+H_2-E_3+E_5],\\
-K_X&\sim& 2H_1+2H_2+E_3+E_5,\\
\Pic(X)&=&\Z[H_1]\oplus\Z[H_2]\oplus\Z[E_3]\oplus\Z[E_5].
\end{eqnarray*}
Hence $D\sim H_1$, $H_2$, $H_1+H_2$, $E_3$, $E_5$, $H_1+E_3$, $H_2+E_3$, 
$H_1+E_5$, $H_2+E_5$, $H_1+H_2+E_3$, $H_1+H_2+E_5$, 
$H_1+H_2+E_3-E_5$ or $H_1+H_2-E_3+E_5$. 
Assume that $D\sim H_1$. In this case, $\tau_1=1$, 
$X_2$ is the image of the morphism associated to the extremal face spanned by 
$\R_{\geq 0}[l_3]$ and $\R_{\geq 0}[l_5]$, $\sN_{D_2/X_2}$ is nonzero effective, 
$\tau_2=2$ and $m=2$. Thus $\eta(D)>0$ by Lemma \ref{conv_lem}. 
If $D\sim H_2$, then $\eta(D)>0$ in the same way. 
Assume that $D\sim H_1+H_2$. In this case, $\tau_1=1$, 
$X_2$ is the image of the morphism associated to the extremal face spanned by 
$\R_{\geq 0}[l_3],\dots,\R_{\geq 0}[l_6]$, $\tau_2=2$ and $m=2$. 
Thus $\eta(D)>0$ by Lemma \ref{conv_lem}. 
Assume that $D\sim E_3$. In this case, 
$\sN_{D/X}\simeq\sO_{\pr^1\times\pr^1}(-1, -1)$, 
$\tau_1=1$, $\sN_{D_2/X_2}\simeq\sO_{\pr^1\times\pr^1}(1, 1)$, 
$\tau_2=2$ and $m=2$. Thus $\eta(D)=0$ by the same calculation in 
Lemma \ref{rhothree_lem}. If $D\sim E_5$, then $\eta(D)=0$ in the same way. 
Assume that $D\sim H_1+E_3$. In this case, $\tau_1=1$, 
$X_2=\pr_{\pr^1}(\sO\oplus\sO(1)^{\oplus 2})$, 
$D_2\in|\xi_\pr^{\otimes 2}\otimes H_{\pr^1}^{\otimes (-1)}|$, 
$\tau_2=3/2$ and $m=2$. Since $\eta_1=47/12$ and $\eta_2=-13/48$, we have 
$\eta(D)/3>0$. If $D\sim H_1+E_5$, $H_2+E_3$ or $H_2+E_5$, then $\eta(D)>0$ 
in the same way. 
Assume that $D\sim H_1+H_2+E_3$. In this case, $D$ is nef, $\tau_1=1$, 
$X_2$ is the projective cone of a quadric hypersurface in $\pr^3$, 
$\tau_2=3/2$ and $m=2$. Thus $\eta(D)>0$ by Lemma \ref{conv_lem}. 
If $D\sim H_1+H_2+E_5$, then $\eta(D)>0$ in the same way. 
Assume that $D\sim H_1+H_2+E_3-E_5$, that is, $D=E_2$. 
In this case, $\tau_1=1/2$, $\tau_2=1$, 
$X_3$ is the projective cone of a quadric hypersurface in $\pr^3$, 
$D_3\sim(-2/3)K_{X_3}$, $\tau_3=3/2$ and $m=3$. 
Since $\eta_1=67/16$ and $\eta_3=-1/12$, we have 
$\eta(D)/3>67/16-1/12>0$. 
If $D\sim H_1+H_2-E_3+E_5$, then $\eta(D)>0$ in the same way. 
Hence $(X, -K_X)$ is divisorially semistable but not divisorially stable. 

\noindent\textbf{The case No.\ 3.}\,\,
Assume that $X$ belongs to No.\ 3 in \cite[Table 4]{MoMu}. 
Let $H_1,\dots,H_3$ be a divisor corresponds to the pullback of 
$\sO_{\pr^1\times\pr^1\times\pr^1}(1, 0, 0),\dots,
\sO_{\pr^1\times\pr^1\times\pr^1}(0, 0, 1)$, respectively. 
Let $E_1,\dots,E_4$ be the exceptional divisor of 
$\cont_{l_1},\dots,\cont_{l_4}$, respectively. 
Then $E_1\sim 2H_2+H_3-E_4$, $E_2\sim 2H_1+H_3-E_4$, 
$E_3\sim H_1+H_2-E_4$ and 
\begin{eqnarray*}
\Nef(X)&=&\R_{\geq 0}[H_1]+\R_{\geq 0}[H_2]+\R_{\geq 0}[H_3]
+\R_{\geq 0}[H_1+H_2+H_3-E_4], \\
\overline{\Eff}(X)&=&\R_{\geq 0}[H_1]+\R_{\geq 0}[H_2]
+\R_{\geq 0}[H_3]+\R_{\geq 0}[H_1+H_2-E_4]\\
&+&\R_{\geq 0}[2H_1+H_3-E_4]+\R_{\geq 0}[2H_2+H_3-E_4]+\R_{\geq 0}[E_4],\\
-K_X&\sim& 2H_1+2H_2+2H_3-E_4,\\
\Pic(X)&=&\Z[H_1]\oplus\Z[H_2]\oplus\Z[H_3]\oplus\Z[E_4].
\end{eqnarray*}
Hence $D\sim H_1$, $H_2$, $H_3$, $H_1+H_3$, $H_2+H_3$, $H_1+H_2-E_4$ 
or $H_1+H_2+H_3-E_4$. 
Assume that $D\sim H_1$. In this case, $\tau_1=1$, 
$X_2=\pr^1\times\F_1$, $D_2\in|\sO_{\pr^1\times\F_1}(1, \sigma_1+f_1)|$, 
$\tau_2=2$ and $m=2$. Thus $\eta(D)>0$ by Lemma \ref{conv_lem}. 
If $D\sim H_2$, then $\eta(D)>0$ in the same way. 
Assume that $D\sim H_3$. In this case, $\tau_1=1$, 
$X_2\in|\sO_{\pr^1\times\pr^1\times\pr^2}(1, 1, 2)|$, 
$D_2\in|\sO_{X_2}(0, 0, 1)|$, 
$\tau_2=2$ and $m=2$. Thus $\eta(D)>0$ by Lemma \ref{conv_lem}. 
Assume that $D\sim H_1+H_3$. In this case, $\tau_1=1$, 
$X_2=\pr^1\times\pr^2$, $D_2\in|\sO(1, 2)|$, 
$\tau_2=3/2$ and $m=2$. Thus $\eta(D)>0$ by Lemma \ref{conv_lem}. 
If $D\sim H_2+H_3$, then $\eta(D)>0$ in the same way. 
Assume that $D\sim H_1+H_2-E_4$, that is, $D=E_3$. In this case, 
$\sN_{D/X}\simeq\sO_{\pr^1\times\pr^1}(-1, 0)$, $\tau_1=1$, 
$X_2=\pr^1\times\pr^1\times\pr^1$, $D_2\in|\sO(1, 1, 0)|$, 
$\tau_2=2$ and $m=2$. Thus $\eta(D)>0$ by Lemma \ref{three_lem}. 
Assume that $D\sim H_1+H_2+H_3-E_4$. In this case, $D$ is nef, $\tau_1=1$, 
$X_2=\pr^1\times\pr^1\times\pr^1$, $D_2\in|\sO(1, 1, 1)|$, 
$\tau_2=2$ and $m=2$. Thus $\eta(D)>0$ by Lemma \ref{conv_lem}. 
Hence $(X, -K_X)$ is divisorially stable. 

\noindent\textbf{The case No.\ 4.}\,\,
Assume that $X$ belongs to No.\ 4 in \cite[Table 4]{MoMu}. 
Let $H$ be a divisor corresponds to the pullback of 
$\sO_Q(1)$. Let $E_1,\dots,E_5$ be the exceptional divisor of 
$\cont_{l_1},\dots,\cont_{l_5}$, respectively. 
Then $E_3\sim -2E_1+H$, $E_4\sim -2E_2+H$ and 
\begin{eqnarray*}
\Nef(X)&=&\R_{\geq 0}[H]+\R_{\geq 0}[-E_1+H]+\R_{\geq 0}[-E_2+H]\\
&+&\R_{\geq 0}[-E_1-E_2+H]+\R_{\geq 0}[-E_1-E_2+H-E_5], \\
\overline{\Eff}(X)&=&\R_{\geq 0}[E_1]+\R_{\geq 0}[E_2]
+\R_{\geq 0}[-2E_1+H]+\R_{\geq 0}[-2E_2+H]\\
&+&\R_{\geq 0}[-E_1-E_2+H-E_5]+\R_{\geq 0}[E_5],\\
-K_X&\sim& -2E_1-2E_2+3H-E_5,\\
\Pic(X)&=&\Z[E_1]\oplus\Z[E_2]\oplus\Z[H]\oplus\Z[E_5].
\end{eqnarray*}
Hence $D\sim -2E_1-E_2+2H-2E_5$, $-E_1-2E_2+2H-2E_5$, 
$-E_1+H-E_5$, $-E_2+H-E_5$, $-E_1-E_2+H-E_5$, $-2E_1-E_2+2H-E_5$, 
$-E_1-2E_2+2H-E_5$, $-3E_1-E_2+2H-E_5$, $-E_1-3E_2+2H-E_5$, $-2E_1-2E_2+2H-E_5$, 
$E_1$, $E_2$, $-E_1+H$, $-E_2+H$, $-2E_1+H$, $-2E_2+H$, $-E_1-E_2+H$, 
$E_5$, $E_1+E_5$ or $E_2+E_5$. 
Assume that $D\sim -2E_1-E_2+2H-2E_5$. 
In this case, $\tau_1=1/2$, 
$X_2$ is the blowup of $Q$ along general two points, 
$D_2\sim -2E_1-E_2+2H$, where $E_1$, $E_2$ are the exceptional divisors of the 
morphism $X_2\to Q$, $H$ corresponds to the pullback of $\sO_Q(1)$, 
$\tau_2=1$, $X_3=\pr^3$, $D_3\in|\sO(3)|$, $\tau_3=4/3$ and $m=3$. 
Since $\eta_1=991/192$ and $\eta_3=-1/36$, we have 
$\eta(D)/3>991/192-1/36>0$. 
If $D\sim -E_1-2E_2+2H-2E_5$, then $\eta(D)>0$ in the same way. 
Assume that $D\sim -E_1+H-E_5$. In this case, $\tau_1=1$, 
$X_2=V_7$, $D_2\sim(-1/2)K_{V_7}$, $\tau_2=2$ and $m=2$. 
Since $\eta_1=13/3$ and $\eta_2=-7/12$, we have $\eta(D)/3>0$. 
If $D\sim -E_2+H-E_5$, then $\eta(D)>0$ in the same way. 
Assume that $D\sim -E_1-E_2+H-E_5$. In this case, $\tau_1=1$, 
$X_2$ is the blowup of $Q$ along general two points, 
$D_2\sim -E_1-E_2+H$, where $E_1$, $E_2$ are the exceptional divisors of the 
morphism $X_2\to Q$, $H$ corresponds to the pullback of $\sO_Q(1)$, 
$\tau_2=2$, $X_3=Q$, $D_3\in|\sO(1)|$, $\tau_3=3$ and $m=3$. 
Since $\eta_1=3$, $\eta_2=-5/3$ and $\eta_3=-5/6$, we have $\eta(D)/3>0$. 
Assume that $D\sim -2E_1-E_2+2H-E_5$. In this case, $D$ is nef, $\tau_1=1$, 
$X_2=\pr^3$, $D_2\in|\sO(3)|$, $\tau_2=4/3$ and $m=2$. 
Thus $\eta(D)>0$ by Lemma \ref{conv_lem}. 
If $D\sim -E_1-2E_2+2H-E_5$, then $\eta(D)>0$ in the same way. 
Assume that $D\sim -3E_1-E_2+2H-E_5$. In this case, $\tau_1=1/2$, 
$X_2$ is the blowup of $\pr^3$ along a line and a conic, 
$\tau_2=1$, $X_3=\pr^3$, $D_3\in|\sO(3)|$, $\tau_3=4/3$ and $m=3$. 
Since $\eta_1=257/48$ and $\eta_3=-1/36$, we have $\eta(D)/3>257/48-1/36>0$. 
If $D\sim -E_1-3E_2+2H-E_5$, then $\eta(D)>0$ in the same way. 
Assume that $D\sim -2E_1-2E_2+2H-E_5$. In this case, $D$ is nef, $\tau_1=1$, 
$X_2=Q$, $D_2\in|\sO(2)|$, $\tau_2=3/2$ and $m=2$. 
Thus $\eta(D)>0$ by Lemma \ref{conv_lem}. 
Assume that $D\sim E_1$. In this case, $\sN_{D/X}\simeq\sO_{\F_1}(-\sigma_1-f_1)$, 
$\tau_1=1$, $X_2$ belongs to No.\ 30 in \cite[Table 3]{MoMu}, 
$\sN_{D_2/X_2}\simeq\sO_{\F_1}(\sigma_1+f_1)$, $\tau_2=2$ and $m=2$. 
Thus $\eta(D)=0$ by Lemma \ref{three_lem}. 
If $D\sim E_2$, then $\eta(D)=0$ in the same way. 
Assume that $D\sim -E_1+H$. In this case, $D$ is nef, $\tau_1=1$, 
$X_2=\pr_{\pr^1}(\sO^{\oplus 2}\oplus\sO(1))$, $D_2\in|\xi_\pr^{\otimes 2}|$, 
$\tau_2=3/2$ and $m=2$. Thus $\eta(D)>0$ by Lemma \ref{conv_lem}. 
If $D\sim -E_2+H$, then $\eta(D)>0$ in the same way. 
Assume that $D\sim -2E_1+H$. In this case, $\tau_1=1/2$, 
$X_2$ is the blowup of $\pr^3$ along a line and a conic, 
$\tau_2=1$, $X_3=\pr_{\pr^1}(\sO^{\oplus 2}\oplus\sO(1))$, 
$D_3\in|\xi_\pr^{\otimes 2}|$, $\tau_3=3/2$ and $m=3$. 
Since $\eta_1=377/96$ and $\eta_3=-5/24$, we have $\eta(D)/3>377/96-5/24>0$. 
If $D\sim -2E_2+H$, then $\eta(D)>0$ in the same way. 
Assume that $D\sim -E_1-E_2+H$. In this case, $D$ is nef, $\tau_1=1$, 
$X_2$ is the blowup of $Q$ along a conic, $D_2$ corresponds to the pullback of 
$\sO_Q(1)$, $\tau_2=2$ and $m=2$. Thus $\eta(D)>0$ by Lemma \ref{conv_lem}. 
Assume that $D\sim E_1+E_5$. In this case, $\tau_1=1$, 
$X_2=\pr_{\pr^1}(\sO^{\oplus 2}\oplus\sO(1))$, 
$D_2\in|\xi_\pr^{\otimes 2}\otimes H_{\pr^1}^{\otimes (-1)}|$, $\tau_2=3/2$ and $m=2$. 
Since $\eta_1=4$ and $\eta_2=-19/48$, we have $\eta(D)/3>0$. 
If $D\sim E_2+E_5$, then $\eta(D)>0$ in the same way. 
Assume that $D\sim E_5$. In this case, 
$\sN_{D/X}\simeq\sO_{\pr^1\times\pr^1}(-1, 0)$, $\tau_1=1$, 
$X_2$ is the blowup of $Q$ along a conic, $D_2$ is the exceptional divisor 
of the morphism $X_2\to Q$, $\sN_{D_2/X_2}\simeq\sO_{\pr^1\times\pr^1}(-1, 2)$, 
$\tau_2=2$ and $m=2$. Thus $\eta(D)>0$ by Lemma \ref{three_lem}. 
Hence $(X, -K_X)$ is divisorially semistable but not divisorially stable. 

\noindent\textbf{The case No.\ 5.}\,\,
Assume that $X$ belongs to No.\ 5 in \cite[Table 4]{MoMu}. 
We consider the case that $D$ is the strict transform of the divisor 
in $|\sO_{\pr^1\times\pr^2}(0, 1)|$ passing through the conic which is one of 
the center of the blowup $X\to\pr^1\times\pr^2$. 
Then $\sN_{D/X}\simeq\sO_{\pr^1\times\pr^1}(-1, -1)$, 
$\tau_1=1$, $X_2=\pr^1\times\pr^2$, $D_2\in|\sO(0, 1)|$, $\tau_2=3$ and $m=2$. 
Thus $\eta(D)<0$ by Lemma \ref{three_lem}. 
Hence $(X, -K_X)$ is not divisorially semistable. 

\noindent\textbf{The case No.\ 6.}\,\,
Assume that $X$ belongs to No.\ 6 in \cite[Table 4]{MoMu}. 
Let $H_1,\dots,H_3$ be a divisor corresponds to the pullback of 
$\sO_{\pr^1\times\pr^1\times\pr^1}(1, 0, 0),\dots,
\sO_{\pr^1\times\pr^1\times\pr^1}(0, 0, 1)$, respectively. 
Let $E_1,\dots,E_4$ be the exceptional divisor of 
$\cont_{l_1},\dots,\cont_{l_4}$, respectively. 
Then $E_1\sim H_2+H_3-E_4$, $E_2\sim H_1+H_3-E_4$, 
$E_3\sim H_1+H_2-E_4$ and 
\begin{eqnarray*}
\Nef(X)&=&\R_{\geq 0}[H_1]+\R_{\geq 0}[H_2]+\R_{\geq 0}[H_3]
+\R_{\geq 0}[H_1+H_2+H_3-E_4], \\
\overline{\Eff}(X)&=&\R_{\geq 0}[H_1]+\R_{\geq 0}[H_2]
+\R_{\geq 0}[H_3]+\R_{\geq 0}[H_1+H_2-E_4]+\R_{\geq 0}[H_1+H_3-E_4]\\
&+&\R_{\geq 0}[H_2+H_3-E_4]+\R_{\geq 0}[E_4],\\
-K_X&\sim& 2H_1+2H_2+2H_3-E_4,\\
\Pic(X)&=&\Z[H_1]\oplus\Z[H_2]\oplus\Z[H_3]\oplus\Z[E_4].
\end{eqnarray*}
Hence $D\sim H_1+H_2-E_4$, $H_1+H_3-E_4$, $H_2+H_3-E_4$, $H_1+H_2+H_3-E_4$, 
$H_1$, $H_2$, $H_3$, $H_1+H_2$, $H_1+H_3$, $H_2+H_3$, $H_1+H_2+H_3$, 
$E_4$, $H_1+E_4$, $H_2+E_4$ or $H_3+E_4$. 
Assume that $D\sim H_1+H_2-E_4$, that is, $D=E_3$. In this case, 
$\sN_{D/X}\simeq\sO_{\pr^1\times\pr^1}(-1, 1)$, $\tau_1=1$, 
$X_2=\pr^1\times\pr^1\times\pr^1$, $D_2\in|\sO(1, 1, 0)|$, 
$\tau_2=2$ and $m=2$. Thus $\eta(D)>0$ by Lemma \ref{three_lem}. 
If $D\sim H_1+H_3-E_4$ or $H_2+H_3-E_4$, then $\eta(D)>0$ in the same way. 
Assume that $D\sim H_1+H_2+H_3-E_4$. In this case, $D$ is nef, 
$\tau_1=1$, $X_2=\pr^1\times\pr^1\times\pr^1$, $D_2\in|\sO(1, 1, 1)|$, 
$\tau_2=2$ and $m=2$. Thus $\eta(D)>0$ by Lemma \ref{conv_lem}. 
Assume that $D\sim H_1$. In this case, $\tau_1=1$, 
$X_2=\pr_{\pr^1\times\pr^1}(\sO(1, 0)\oplus\sO(0, 1))$, $D_2$ corresponds to 
the pullback of $\sO_{\pr^3}(1)$, $\tau_2=2$ and $m=2$. 
Thus $\eta(D)>0$ by Lemma \ref{conv_lem}. 
If $D\sim H_2$ or $H_3$, then $\eta(D)>0$ in the same way. 
Assume that $D\sim H_1+H_2$. In this case, $\tau_1=1$, 
$X_2=\pr_{\pr^1}(\sO^{\oplus 2}\oplus\sO(1))$, $D_2\in|\xi_\pr^{\otimes 2}|$, 
$\tau_2=3/2$ and $m=2$. 
Thus $\eta(D)>0$ by Lemma \ref{conv_lem}. 
If $D\sim H_1+H_3$ or $H_2+H_3$, then $\eta(D)>0$ in the same way. 
Assume that $D\sim H_1+H_2+H_3$. In this case, $\tau_1=1$, 
$X_2=\pr^3$, $D_2\in|\sO(3)|$, $\tau_2=4/3$ and $m=2$. 
Thus $\eta(D)>0$ by Lemma \ref{conv_lem}. 
Assume that $D\sim E_4$. In this case, 
$\sN_{D/X}\simeq\sO_{\pr^1\times\pr^1}(-1, 2)$, $\tau_1=1$, 
$X_2=\pr^3$, $D_2\in|\sO(2)|$, 
$\tau_2=2$ and $m=2$. Thus $\eta(D)>0$ by Lemma \ref{three_lem}. 
Assume that $D\sim H_1+E_4$. In this case, $\tau_1=1/2$, 
$X_2=\pr_{\pr^1\times\pr^1}(\sO(1, 0)\oplus\sO(0, 1))$, 
$\tau_2=1$, $X_3=\pr^3$, $D_3\in|\sO(3)|$, $\tau_3=4/3$ and $m=3$. 
Since $\eta_1=1045/192$ and $\eta_3=-1/36$, we have $\eta(D)/3>1045/192-1/36>0$. 
If $D\sim H_2+E_4$ or $H_3+E_4$, then $\eta(D)>0$ in the same way. 
Hence $(X, -K_X)$ is divisorially stable. 

\noindent\textbf{The case No.\ 7.}\,\,
Assume that $X$ belongs to No.\ 7 in \cite[Table 4]{MoMu}. 
Let $H_1$, $H_2$ be a divisor corresponds to the pullback of 
$\sO_{W_6}(1, 0)$, $\sO_{W_6}(0, 1)$, respectively. 
Let $E_1,\dots,E_4$ be the exceptional divisor of 
$\cont_{l_1},\dots,\cont_{l_4}$, respectively. 
Then $E_3\sim -E_1+H_2$, $E_4\sim -E_2+H_1$ and 
\begin{eqnarray*}
\Nef(X)&=&\R_{\geq 0}[H_1]+\R_{\geq 0}[H_2]+\R_{\geq 0}[-E_1+H_1]
+\R_{\geq 0}[-E_2+H_2], \\
\overline{\Eff}(X)&=&\R_{\geq 0}[E_1]+\R_{\geq 0}[E_2]
+\R_{\geq 0}[-E_1+H_1]+\R_{\geq 0}[-E_2+H_1]\\
&+&\R_{\geq 0}[-E_1+H_2]+\R_{\geq 0}[-E_2+H_2],\\
-K_X&\sim& -E_1-E_2+2H_1+2H_2,\\
\Pic(X)&=&\Z[E_1]\oplus\Z[E_2]\oplus\Z[H_1]\oplus\Z[H_2].
\end{eqnarray*}
Hence $D\sim E_1$, $E_2$, $H_1$, $H_2$, $-E_1+H_1$, $-E_2+H_2$, 
$-E_1+H_2$, $-E_2+H_1$, $E_1-E_2+H_1$, $-E_1+E_2+H_2$, $-E_1+E_2+H_1$, 
$E_1-E_2+H_2$, $-E_1+H_1+H_2$, $-E_2+H_1+H_2$, $-2E_1+H_1+H_2$, 
$-2E_2+H_1+H_2$ or $-E_1-E_2+H_1+H_2$. 
Assume that $D\sim E_1$. In this case, 
$\sN_{D/X}\simeq\sO_{\pr^1\times\pr^1}(-1, 0)$, $\tau_1=1$, 
$X_2=\pr^1\times\F_1$, $D_2\in|\sO_{\pr^1\times\F_1}(0, \sigma_1+f_1)|$, 
$\tau_2=2$ and $m=2$. Thus $\eta(D)=0$ by Lemma \ref{three_lem}. 
If $D\sim E_2$, then $\eta(D)=0$ in the same way. 
Assume that $D\sim H_1$. In this case, $\tau_1=1$, 
$X_2=\pr^1\times\F_1$, $D_2\in|\sO_{\pr^1\times\F_1}(1, \sigma_1+f_1)|$, 
$\tau_2=2$ and $m=2$. Thus $\eta(D)>0$ by Lemma \ref{conv_lem}. 
If $D\sim H_2$, then $\eta(D)>0$ in the same way. 
Assume that $D\sim -E_1+H_1$. In this case, $D$ is nef, $\tau_1=1$, 
$X_2=W_6\times_{\pr^2}\F_1$, $D_2$ corresponds to 
the pullback of $\sO_{W_6}(1, 0)$, $\tau_2=2$ and $m=2$. 
Thus $\eta(D)>0$ by Lemma \ref{conv_lem}. 
If $D\sim -E_2+H_2$, then $\eta(D)>0$ in the same way. 
Assume that $D\sim -E_1+H_2$, that is, $D=E_3$. In this case, 
$\sN_{D/X}\simeq\sO_{\F_1}(-\sigma_1+f_1)$, $\tau_1=1$, 
$X_2=\pr^1\times\pr^2$, $D_2\in|\sO(1, 1)|$, 
$\tau_2=2$ and $m=2$. Thus $\eta(D)>0$ by Lemma \ref{three_lem}. 
If $D\sim -E_2+H_1$, then $\eta(D)>0$ in the same way. 
Assume that $D\sim E_1-E_2+H_1$. In this case, $\tau_1=1/2$, 
$X_2=\pr^1\times\F_1$, $D_2\in|\sO_{\pr^1\times\F_1}(1, \sigma_1+2f_1)|$, 
$\tau_2=1$, $X_3=\pr^1\times\pr^2$, $D_3\in|\sO(1, 2)|$, $\tau_3=3/2$ and $m=3$. 
Since $\eta_2=259/192$ and $\eta_3=-7/48$, we have 
$\eta(D)/3>259/192-7/48>0$. 
If $D\sim -E_1+E_2+H_2$, then $\eta(D)>0$ in the same way. 
Assume that $D\sim -E_1+E_2+H_1$. In this case, $\tau_1=1$, 
$X_2=\pr^1\times\pr^2$, $D_2\in|\sO(0, 2)|$, 
$\tau_2=3/2$ and $m=2$. 
Since $\eta_1=21/4$ and $\eta_2=-1/3$, we have $\eta(D)/3>0$. 
If $D\sim E_1-E_2+H_2$, then $\eta(D)>0$ in the same way. 
Assume that $D\sim -E_1+H_1+H_2$. In this case, $D$ is nef, $\tau_1=1$, 
$X_2=\pr^1\times\pr^2$, $D_2\in|\sO(1, 2)|$, 
$\tau_2=3/2$ and $m=2$. 
Thus $\eta(D)>0$ by Lemma \ref{conv_lem}. 
If $D\sim -E_2+H_1+H_2$, then $\eta(D)>0$ in the same way. 
Assume that $D\sim -2E_1+H_1+H_2$. In this case, $\tau_1=1$, 
$X_2=W_6\times_{\pr^2}\F_1$, 
$\tau_2=1$, $X_3=\pr^1\times\pr^2$, $D_3\in|\sO(1, 2)|$, $\tau_3=3/2$ and $m=3$. 
Since $\eta_1=163/32$ and $\eta_3=-7/48$, we have $\eta(D)/3>163/32-7/48>0$. 
If $D\sim -2E_2+H_1+H_2$, then $\eta(D)>0$ in the same way. 
Assume that $D\sim -E_1-E_2+H_1+H_2$. In this case, $D$ is nef, $\tau_1=1$, 
$X_2=W_6$, $D_2\sim(-1/2)K_{W_6}$, $\tau_2=2$ and $m=2$. 
Thus $\eta(D)>0$ by Lemma \ref{conv_lem}. 
Hence $(X, -K_X)$ is divisorially semistable but not divisorially stable. 

\noindent\textbf{The case No.\ 8.}\,\,
Assume that $X$ belongs to No.\ 8 in \cite[Table 4]{MoMu}. 
We consider the case that $D$ is the strict transform of the divisor 
in $|\sO_{\pr^1\times\pr^1\times\pr^1}(1, 0, 0)|$ 
passing through the center of the blowup $X\to\pr^1\times\pr^1\times\pr^1$. 
Then $\sN_{D/X}\simeq\sO_{\pr^1\times\pr^1}(-1, -1)$, 
$\tau_1=1$, $X_2=\pr^1\times\pr^1\times\pr^1$, $D_2\in|\sO(1, 0, 0)|$, 
$\tau_2=2$ and $m=2$. Thus $\eta(D)<0$ by Lemma \ref{conv_lem}. 
Hence $(X, -K_X)$ is not divisorially semistable. 

\noindent\textbf{The case No.\ 9--12.}\,\,
Assume that $X$ belongs to one of No.\ 9--12 in \cite[Table 4]{MoMu}. 
Then $X$ is toric. Then $(X, -K_X)$ is not divisorially semistable by 
Theorem \ref{introtoric_thm} and \cite{mab}.

\noindent\textbf{The case No.\ 13.}\,\,
Assume that $X$ belongs to No.\ 13 in \cite[Table 4]{MoMu}. 
Let $l_1$, $l_2$, $l_3\subset X$ be the strict transform of a curve on 
$\pr^1\times\pr^1\times\pr^1$ of tridegree $(1, 0, 0)$, $(0, 1, 0)$, $(0, 0, 1)$, 
passing through the center of the blowup 
$X\to\pr^1\times\pr^1\times\pr^1$, respectively. 
Let $l_4\subset X$ be the strict transform of a curve on 
$\pr^1\times\pr^1\times\pr^1$ of tridegree $(1, 1, 0)$ 
which is contained in the divisor in $|\sO_{\pr^1\times\pr^1\times\pr^1}(1, 1, 0)|$
which contains the center of the blowup $X\to\pr^1\times\pr^1\times\pr^1$. 
Let $l_5\subset X$ be an exceptional curve of the blowup 
$X\to\pr^1\times\pr^1\times\pr^1$. 
Then $\overline{\NE}(X)$ is spanned by the classes of $l_1,\dots,l_5$. 
Let $H_1,\dots,H_3$ be a divisor corresponds to the pullback of 
$\sO_{\pr^1\times\pr^1\times\pr^1}(1, 0, 0),\dots,\sO_{\pr^1\times\pr^1\times\pr^1}
(0, 0, 1)$, respectively. 
Let $E_1$, $E_2$, $E_3$, $E_5$ be the exceptional divisor of 
$\cont_{l_1}$, $\cont_{l_2}$, $\cont_{l_3}$, $\cont_{l_5}$, respectively. 
Then $E_1\sim 3H_2+H_3-E_5$, $E_2\sim 3H_1+H_3-E_5$, $E_3\sim H_1+H_2-E_5$ and 
\begin{eqnarray*}
\Nef(X)&=&\R_{\geq 0}[H_1]+\R_{\geq 0}[H_2]+\R_{\geq 0}[H_3]\\
&+&\R_{\geq 0}[2H_1+H_2+H_3-E_5]+\R_{\geq 0}[H_1+2H_2+H_3-E_5], \\
\overline{\Eff}(X)&=&\R_{\geq 0}[H_1]+\R_{\geq 0}[H_2]+\R_{\geq 0}[H_3]
+\R_{\geq 0}[H_1+H_2-E_5]\\
&+&\R_{\geq 0}[3H_1+H_3-E_5]+\R_{\geq 0}[3H_2+H_3-E_5]+\R_{\geq 0}[E_5],\\
-K_X&\sim& 2H_1+2H_2+2H_3-E_5,\\
\Pic(X)&=&\Z[H_1]\oplus\Z[H_2]\oplus\Z[H_3]\oplus\Z[E_5].
\end{eqnarray*}
Hence $D\sim H_1$, $H_2$, $H_3$, 
$H_1+H_3$, $H_2+H_3$, $H_1+H_2-E_5$ or $H_1+H_2+H_3-E_5$. 
Assume that $D\sim H_1$. In this case, $\tau_1=1$, 
$X_2=\pr_{\pr^1}(\sO\oplus\sO(1)^{\oplus 2})$, $D_2\in|\xi_\pr^{\otimes 2}|$, 
$\tau_2=3/2$ and $m=2$. Thus $\eta(D)>0$ by Lemma \ref{conv_lem}. 
If $D\sim H_2$, then $\eta(D)>0$ in the same way. 
Assume that $D\sim H_3$. In this case, $\tau_1=1$, 
$X_2$ is the image of the morphism $\cont_{l_3}$. 
If we see $\ND(X_2)$ as a subspace of 
$\ND(X)$, then $-K_{X_2}\sim 3H_1+3H_2+2H_3-2E_5$ and $D_2\sim H_1+H_2+H_3-E_5$. 
Moreover, $\Nef(X_2)$ is spanned by the classes of 
$H_1$, $H_2$, $2H_1+H_2+H_3-E_5$ and $H_1+2H_2+H_3-E_5$. 
Since $\tau(D)=2$
 and $-K_{X_2}-2D_2\in\Nef(X_2)$, we have $\tau_2=2$ and $m=2$. 
Since $\sN_{D_2/X_2}$ is nonzero effective, we have 
$\eta(D)>0$ by Lemma \ref{conv_lem}. 
Assume that $D\sim H_1+H_3$. In this case, $\tau_1=1$, 
$X_2$ is the image of the contraction of the negative section of 
$\pr_{\pr^1}(\sO\oplus\sO(1)^{\oplus 2})$, 
$D_2\sim(-2/3)K_{X_2}$, $\tau_2=3/2$ and $m=2$. 
Thus $\eta(D)>0$ by Lemma \ref{conv_lem}. 
If $D\sim H_2+H_3$, then $\eta(D)>0$ in the same way. 
Assume that $D\sim H_1+H_2-E_5$, that is, $D=E_3$. In this case, 
$\sN_{D/X}\simeq\sO_{\pr^1\times\pr^1}(-1, -1)$, $\tau_1=1$, 
$X_2=\pr^1\times\pr^1\times\pr^1$, $D_2\in|\sO(1, 1, 0)|$, 
$\tau_2=2$ and $m=2$. Thus $\eta(D)>0$ by Lemma \ref{three_lem}. 
Assume that $D\sim H_1+H_2+H_3-E_5$. In this case, $\tau_1=1$, 
$X_2=\pr^1\times\pr^1\times\pr^1$, $D_2\in|\sO(1, 1, 1)|$, 
$\tau_2=2$ and $m=2$. 
Since $\eta_1=41/12$ and $\eta_2=-1/2$, we have $\eta(D)/3>0$. 
Hence $(X, -K_X)$ is divisorially stable. 

Therefore we have proved Theorem \ref{three_thm} for the case $\rho(X)=4$.

\subsection{The case $\rho(X)=5$}\label{rho5_section}

We consider the case $\rho(X)=5$. We assume that $D$ is a suspicious divisor. 

\noindent\textbf{The case No.\ 1.}\,\,
Assume that $X$ belongs to No.\ 1 in \cite[Table 5]{MoMu}. 
Let $E_7\subset X$ be the prime divisor such that the center on $Q$ is a conic. 
In \cite{mat}, $l_7$ is a fiber of the ruling $E_7\simeq\pr^1\times\pr^1\to\pr^1$. 
Let $l_8\subset X$ be a fiber of the other ruling $E_7\simeq\pr^1\times\pr^1\to\pr^1$. 
Then $\overline{\NE}(X)$ is spanned by the classes of $l_1\dots, l_8$ 
(in \cite{mat}, the ray $\R_{\geq 0}[l_8]$ is forgotten). 
Let $H$ be a divisor corresponds to the pullback of 
$\sO_Q(1)$. 
Let $E_1,\dots,E_6$ be the exceptional divisor of 
$\cont_{l_1},\dots,\cont_{l_6}$, respectively. 
Then $E_4\sim -2E_1+H$, $E_5\sim -2E_2+H$, 
$E_6\sim -2E_3+H$ and 
\begin{eqnarray*}
\Nef(X)&=&\R_{\geq 0}[H]+\R_{\geq 0}[-E_1+H]+\R_{\geq 0}[-E_2+H]
+\R_{\geq 0}[-E_1-E_2+H]\\
&+&\R_{\geq 0}[-E_3+H]+\R_{\geq 0}[-E_1-E_3+H]+\R_{\geq 0}[-E_2-E_3+H]\\
&+&\R_{\geq 0}[-E_1-E_2-E_3-E_7+H], \\
\overline{\Eff}(X)&=&\R_{\geq 0}[E_1]+\R_{\geq 0}[E_2]
+\R_{\geq 0}[E_3]+\R_{\geq 0}[E_7]+\R_{\geq 0}[-2E_1+H]\\
&+&\R_{\geq 0}[-2E_2+H]+\R_{\geq 0}[-2E_3+H]+\R_{\geq 0}[-E_1-E_2-E_3-E_7+H],\\
-K_X&\sim& -2E_1-2E_2-2E_3-E_7+3H,\\
\Pic(X)&=&\Z[E_1]\oplus\Z[E_2]\oplus\Z[E_3]\oplus\Z[E_7]\oplus\Z[H].
\end{eqnarray*}
Hence $D\sim E_1$, $E_2$, $E_3$, $E_7$, $E_1+E_7$, $E_2+E_7$, $E_3+E_7$, 
$-E_2-E_3-E_7+H$, $-E_1-E_3-E_7+H$, $-E_1-E_2-E_7+H$, 
$-E_1-E_2-E_3-E_7+H$, $-E_2-E_3+H$, $-E_1-E_3+H$, $-E_1-E_2+H$, 
$-E_1-E_2-E_3+H$, $-E_1-2E_2-2E_3-2E_7+2H$, $-2E_1-E_2-2E_3-2E_7+2H$, 
$-2E_1-2E_2-E_3-2E_7+2H$, $-E_1-2E_2-2E_3-E_7+2H$, 
$-2E_1-E_2-2E_3-E_7+2H$, $-2E_1-2E_2-E_3-E_7+2H$ or 
$-2E_1-2E_2-2E_3-E_7+2H$. 

Assume that $D\sim E_1$. In this case, $\sN_{D/X}\simeq\sO_{\F_1}(-\sigma_1-f_1)$, 
$\tau_1=1$, $\tau(D)=2$, 
$X_2$ is the image of the morphism associated to the extremal face spanned by 
$\R_{\geq 0}[l_4]$ and $\R_{\geq 0}[l_8]$. If we see $\ND(X_2)$ as a subspace of 
$\ND(X)$, then $-K_{X_2}\sim -4E_1-2E_2-2E_3+4H$ and $D_2\sim -E_1+E_7+H$. 
Moreover, $\Nef(X_2)$ is spanned by the classes of $-E_1-E_2+H$, 
$-E_1-E_3+H$ and $-E_1-E_2-E_3-E_7+H$. Since $-K_{X_2}-2D_2$ is nef, we have 
$\tau_2=2$ and $m=2$. 
Since $\eta_1=9/4$ and $\eta_2=-4/3$, we have $\eta(D)/3>0$. 
If $D\sim E_2$ or $E_3$, then $\eta(D)>0$ in the same way. 
Assume that $D\sim E_7$. In this case, 
$\sN_{D/X}\simeq\sO_{\pr^1\times\pr^1}(-1, -1)$, $\tau_1=1$, 
$X_2$ is the blowup of $Q$ along a conic, $D_2$ is the exceptional divisor of the 
morphism $X_2\to Q$, $\sN_{D_2/X_2}\simeq\sO_{\pr^1\times\pr^1}(-1, 2)$, 
$\tau_2=2$ and $m=2$. Thus $\eta(D)>0$ by Lemma \ref{three_lem}. 
Assume that $D\sim E_1+E_7$. In this case, $\tau_1=1$, 
$X_2=\pr_{\pr^1}(\sO^{\oplus 2}\oplus\sO(1))$, 
$D_2\in|\xi_\pr^{\otimes 2}\otimes H_{\pr^1}^{\otimes (-1)}|$, 
$\tau_2=3/2$ and $m=2$. 
Since $\eta_1=19/6$ and $\eta_2=-5/24$, we have $\eta(D)/3>0$. 
If $D\sim E_2+E_7$ or $E_3+E_7$, then $\eta(D)>0$ in the same way. 
Assume that $D\sim -E_2-E_3-E_7+H$. In this case, $\tau_1=1$, 
$X_2$ is the image of the morphism associated to the extremal face spanned by 
$\R_{\geq 0}[l_4]$, $\R_{\geq 0}[l_7]$ and $\R_{\geq 0}[l_8]$. 
If we see $\ND(X_2)$ as a subspace of 
$\ND(X)$, then $-K_{X_2}\sim -4E_1-2E_2-2E_3+4H$ and $D_2\sim -2E_1-E_2-E_3+2H$. 
Thus $D_2\sim(-1/2)K_{X_2}$. Hence $\tau_2=2$ and $m=2$. 
Since $\eta_1=23/6$ and $\eta_2=-1/2$, we have $\eta(D)/3>0$. 
If $D\sim -E_1-E_3-E_7+H$ or $-E_1-E_2-E_7+H$, then $\eta(D)>0$ in the same way. 
Assume that $D\sim -E_1-E_2-E_3-E_7+H$. In this case, $\tau_1=1$, 
$X_2$ is the blowup of $Q$ along general three points, 
$D_2\sim -E_1-E_2-E_3+H$, where $E_1,\dots,E_3$ are the exceptional divisors of the 
morphism $X_2\to Q$ and $H$ corresponds to the pullback of $\sO_Q(1)$, 
$\tau_2=2$, $X_3=Q$, $D_3\in|\sO_Q(1)|$, $\tau_3=3$ and $m=3$. 
Since $\eta_1=5/2$, $\eta_2=-19/12$ and $\eta_3=-5/6$, we have 
$\eta(D)/3=1/12>0$. 
Assume that $D\sim -E_2-E_3+H$. In this case, $D$ is nef, $\tau_1=1$, 
$X_2=\pr_{\pr^1}(\sO^{\oplus 2}\oplus\sO(1))$, 
$D_2\in|\xi_\pr^{\otimes 2}|$, $\tau_2=3/2$ and $m=2$. 
Thus $\eta(D)>0$ by Lemma \ref{conv_lem}. 
If $D\sim -E_1-E_3+H$ or $-E_1-E_2+H$, then $\eta(D)>0$ in the same way. 
Assume that $D\sim -E_1-E_2-E_3+H$. In this case, $\tau_1=1$, 
$X_2$ is the blowup of $Q$ along a conic, 
$D_2$ corresponds to the pullback of $\sO_Q(1)$, 
$\tau_2=2$ and $m=2$. 
Since $\eta_1=41/12$ and $\eta_2=-5/6$, we have $\eta(D)/3>0$. 
Assume that $D\sim -E_1-2E_2-2E_3-2E_7+2H$. In this case, $\tau_1=1/2$, 
$X_2$ is the blowup of $Q$ along general three points, 
$D_2\sim -E_1-2E_2-2E_3+2H$, where $E_1,\dots,E_3$ are the exceptional divisors of 
the morphism $X_2\to Q$ and $H$ corresponds to the pullback of $\sO_Q(1)$, 
$\tau_2=1$, $X_3=\pr^3$, $D_3\in|\sO(3)|$, $\tau_3=4/3$ and $m=3$. 
Since $\eta_1=23/4$ and $\eta_3=-1/36$, we have $\eta(D)/3>23/4-1/36>0$. 
If $D\sim -2E_1-E_2-2E_3-2E_7+2H$ or $-2E_1-2E_2-E_3-2E_7+2H$, 
then $\eta(D)>0$ in the same way. 
Assume that $D\sim -E_1-2E_2-2E_3-E_7+2H$. In this case, $D$ is nef, $\tau_1=1$, 
$X_2=\pr^3$, $D_2\in|\sO(3)|$, $\tau_2=4/3$ and $m=2$. 
Thus $\eta(D)>0$ by Lemma \ref{conv_lem}. 
If $D\sim -2E_1-E_2-2E_3-E_7+2H$ or $-2E_1-2E_2-E_3-E_7+2H$, 
then $\eta(D)>0$ in the same way. 
Assume that $D\sim -2E_1-2E_2-2E_3-E_7+2H$. In this case, $\tau_1=1$, 
$X_2=Q$, 
$D_2\in|\sO_Q(2)|$, $\tau_2=3/2$ and $m=2$. 
Since $\eta_1=5$ and $\eta_2=-1/12$, we have $\eta(D)/3>0$. 
Hence $(X, -K_X)$ is divisorially stable. 

\noindent\textbf{The case No.\ 2--3.}\,\,
Assume that $X$ belongs to either No.\ 2 or No.\ 3 in \cite[Table 5]{MoMu}. 
Then $X$ is toric. Then $(X, -K_X)$ is not divisorially stable by Corollary \ref{toric_cor}. 
Moreover, $(X, -K_X)$ is divisorially semistable if and only if $X$ belongs to  
No.\ 3 in \cite[Table 5]{MoMu} by Theorem \ref{introtoric_thm} and \cite{mab}. 

Therefore we have proved Theorem \ref{three_thm} for the case $\rho(X)=5$.

\subsection{The case $\rho(X)\geq 6$}\label{rhobig_section}

We consider the case $\rho(X)\geq 6$. In this case, $X$ is isomorphic to the product 
of $\pr^1$ and a del Pezzo surface. By \cite{tian}, 
$X$ admits K\"ahler-Einstein metrics. 
Thus $(X, -K_X)$ is divisorially semistable by Remark \ref{divst_rmk}. 
Moreover, by \cite[Theorem 1.5]{fjt} and Corollary \ref{slope_cor} \eqref{slope_cor22}, 
$(X, -K_X)$ is not divisorially stable. 

As a consequence, we have completed the proof of Theorem \ref{three_thm}.

\end{document}